\numberwithin{equation}{section}
\theoremstyle{plain}
\newtheorem{theorem}{Theorem}
\newtheorem{lemma}{Lemma}[section]
\newtheorem*{toythm}{Toy Theorem}
\newtheorem{prop}{Proposition}[section]
\theoremstyle{definition}
\newtheorem{defn}{Definition}[section]
\theoremstyle{remark}
\newtheorem{remark}{Remark}
\newtheorem*{ackno}{Acknowledgements}
\def\Ad{\operatorname{Ad}}
\newcommand{\pn}[1]{{\color{ForestGreen} \sf PN: [#1]}}
\newcommand{\R}{\mathbb{R}}
\newcommand{\C}{\mathbb{C}}
\def\O{\operatorname{O}}
\newcommand{\Q}{\mathbb{Q}}
\newcommand{\Z}{\mathbb{Z}}
\newcommand{\N}{\mathbb{N}}
\newcommand{\B}{\mathcal{B}}
\newcommand{\D}{\mathcal{D}}
\def\vol{\operatorname{vol}}
\newcommand{\W}{\mathcal{W}}
\newcommand{\GL}{\mathrm{GL}}
\newcommand{\g}{\mathfrak{g}}
\newcommand{\We}{V}
\newcommand{\V}{W}
\newcommand{\PGL}{\mathrm{PGL}}
\newcommand{\Tr}{\mathrm{Tr}}
\newcommand{\sgn}{\mathrm{sgn}} 
\newcommand{\diag}{\mathrm{diag}}
\newcommand{\pop}{\mathbf{pop}}
\newcommand{\res}{\mathrm{res}}
\title{Analytic newvectors for $\mathrm{GL}_n(\mathbb{R})$}
\author{Subhajit Jana and Paul D. Nelson}
\address{Max Planck Institute for Mathematics, Vivatgasse 7, 53111 Bonn, Germany.}
\email{subhajit@mpim-bonn.mpg.de}
\address{ETH Z\"urich, R\"amistrasse 101, 8092 Z\"urich, Switzerland.}
\email{paul.nelson@math.ethz.ch}
\date{}
\begin{document}

\begin{abstract}
We relate
the analytic conductor of a generic irreducible
representation of $\mathrm{GL}_n(\mathbb{R})$
to the invariance properties of vectors
in that representation.
The relationship is an analytic archimedean analogue
of some aspects
of
the classical non-archimedean newvector theory of
Casselman and Jacquet--Piatetski-Shapiro--Shalika.
We illustrate
how this relationship
may be applied
in trace formulas
to majorize sums
over
automorphic forms
on $\mathrm{PGL}_n(\mathbb{Z}) \backslash \mathrm{PGL}_n(\mathbb{R})$
ordered by analytic conductor.




\end{abstract}

\maketitle

\section{Introduction}
Let $F$ be a local field, and let $\pi$ be a generic irreducible
representation of $\GL_n(F)$.  

Suppose
first that $F$ is non-archimedean,
with ring of integers $\mathfrak{o}$,
maximal ideal $\mathfrak{p}$,
and residue field cardinality $q = \# \mathfrak{o}/\mathfrak{p}$.
The newvector theory developed by Casselman and
Jacquet--Piatetski-Shapiro--Shalika
(see \cite{C,JPSS1}
and \cite{M})
then
gives a precise
relationship between the conductor
$C(\pi)$ of $\pi$ and the existence of vectors 
$v$ in $\pi$
invariant under certain compact open subgroups.
Here
we define
$C(\pi)$ in terms of the local $\epsilon$-factor attached to $\pi$
with respect to an unramified additive
character $\psi$ of $F$
by the relation
\begin{equation}\label{eqn:abs-gamma-is-C}
\epsilon(1/2-s,\pi,\psi) = \epsilon(1/2,\pi,\psi) C(\pi)^s,
\end{equation}
so that if $c(\pi) \in \mathbb{Z}_{\geq 0}$ denotes
the conductor exponent,
then $C(\pi) = q^{c(\pi)}$.
The main theorem of newvector theory
says that
$C(\pi)$ is the smallest element 
$X = q^n \geq 1$ of the value group
of $F$ for which $\pi$ contains a nonzero vector $v$ invariant
by the group
\begin{equation}\label{eqn:non-arch-K1-defn}
K_1(X) := \left\{
g \in \GL_n(\mathfrak{o}) :
\begin{gathered}
\text{$|g_{n j}| \leq 1/X$ for $1 \leq j < n$}, \\
\,
|g_{n n} - 1| \leq 1/X
\end{gathered}
\right\}
=
\begin{pmatrix}
\mathfrak{o} & \dotsb & \mathfrak{o} & \mathfrak{o} \\
\dotsb & \dotsb & \dotsb & \dotsb \\
\mathfrak{o} & \dotsb & \mathfrak{o} & \mathfrak{o} \\
\mathfrak{p}^n & \dotsb & \mathfrak{p}^n & 1 + \mathfrak{p}^n \\
\end{pmatrix}
\cap \GL_n(\mathfrak{o})
\end{equation}
or equivalently,
transforming under the group
\begin{equation}\label{eqn:non-arch-K0-defn}
K_0(X) := \left\{
g \in \GL_n(\mathfrak{o}) :
\text{$|g_{n j}| \leq 1/X$ for $1 \leq j < n$}
\right\}
=
\begin{pmatrix}
\mathfrak{o} & \dotsb & \mathfrak{o} & \mathfrak{o} \\
\dotsb & \dotsb & \dotsb & \dotsb \\
\mathfrak{o} & \dotsb & \mathfrak{o} & \mathfrak{o} \\
\mathfrak{p}^n & \dotsb & \mathfrak{p}^n &
\mathfrak{o} \\
\end{pmatrix}
\cap \GL_n(\mathfrak{o})
\end{equation}
by the character
$g \mapsto \omega_{\pi}(g_{nn})$,
with $\omega_\pi$ the central character
of $\pi$.
The vector $v$,
called the \emph{newvector}
(or,
in the language of \cite{JPSS1}, the \textit{essential vector}), is 
then uniquely determined up to a
scalar.

Suppose now that
$F = \mathbb{R}$.
(A similar discussion
should apply when $F = \mathbb{C}$,
but we focus
in this paper on the real case.)
The analytic conductor \cite{IS}
of $\pi$
is then typically defined,
for the sake of concreteness,
by
the the formula
$C(\pi) := \prod_j (1 + |\mu_j|)$,
where $\{\mu_1,\dotsc,\mu_n\}$
is the set of the parameters of $\pi$
characterized by the relation
$L(s,\pi)
= \prod_j \Gamma_{\mathbb{R}}(s+\mu_j)$,
$\Gamma_{\mathbb{R}}(s) := \pi^{-s/2} \Gamma(s/2)$.
In practice, minor variants
of this definition
serve the same purpose;
for instance, one occasionally
sees
the factors 
$(1 + |\mu_j|)$
replaced by
$(3 + |\mu_j|)$,
so that $\log C(\pi)$ is bounded uniformly
away from zero.
By Stirling's formula, one has
$$\gamma(1/2-s,\pi)=\gamma(1/2,\pi)C(\pi)^s+O_\pi(s),\quad s\to 0,$$
where $\gamma$ denotes the local $\gamma$-factor attached to $\pi$.
(cf. \eqref{eqn:abs-gamma-is-C})
The above equation may be understood as
an asymptotic characterization
of $C(\pi)$ and its mild variants.

The classical non-archimedean newvector theory has been applied towards many problems in the analytic theory of automorphic forms.
For instance, to apply trace
formulas (or relative trace formulas, integral representations,
...)  to study averages over a family of automorphic forms of
various quantities of interest (Fourier coefficients, Hecke
eigenvalues, $L$-values, ...), 
one needs a test function that
approximately projects to that family, so that
the spectral side of the trace formula localizes to that family.
For a level aspect family
of forms having spectral parameter
bounded and finite conductor dividing
a given large natural number,
one can use the 
(twisted) characteristic functions of
the congruence subgroups
\eqref{eqn:non-arch-K1-defn}
and
\eqref{eqn:non-arch-K0-defn} to construct 
suitable projectors.
We mention the works
\cite{B2, BBM, V, MV} which make (often implicit) use of such ideas.

By comparison, we are not aware
of an existing comparably simple way
to project (approximately)
onto (say) the family of cusp forms
on $\PGL_n(\mathbb{Z}) \backslash \PGL_n(\mathbb{R})$
whose analytic conductor at the real place
is bounded from above
by a certain quantity.
One approach 
would be
to describe
that family in terms
of local parameters,
partition it 
into subfamilies according
to the approximate values
of those parameters,
and then sum the projectors
associated to the subfamilies;
see
\cite[\S8]{BrM}.
However, this approach 
is
significantly less direct
than
in the non-archimedean
case, and does not
immediately clarify the shape of the
test function defining
the projection.


This discrepancy 
motivates looking
for an archimedean analogue
to classical newvector theory,
that is to say,
an interpretation
of the analytic conductor $C(\pi)$
in terms of the invariance properties
of vectors in a generic representation
$\pi$ of
$\GL_n(\mathbb{R})$.
One
might hope for such an interpretation
to be useful in analytic problems
in which $\pi$ varies in archimedean aspects
in much the same way that
newvector theory
has been useful in non-archimedean aspects.

An exact analogue is clearly
too ambitious:
the group $\GL_n(\mathbb{R})$
has \emph{no} compact open subgroups,
and very few vectors 
\emph{exactly} invariant
by \emph{any}
open subset of $\GL_n(\R)$.
One can ask instead for \emph{approximate}
analogues.

The main purpose of this paper
is to introduce,
for generic irreducible representations
$\pi$ of $\GL_n(\mathbb{R})$,
a
relationship
between the
analytic conductor $C(\pi)$ and the existence of vectors
satisfying a form of approximate invariance under certain
subsets closely related to the subgroups
$K_1(X)$ and $K_0(X)$
arising in the non-archimedean case.


\subsection{The case \texorpdfstring{$n=1$}{}}
As warmup for the 
more complicated statements
given below,
let us consider in detail
what ``analytic newvector theory''
looks like in the simplest case $n=1$.

We recall first the non-archimedean story,
which we elect
to present classically in terms
of Dirichlet characters
(rather than the
closely-related characters of $\GL_1(\mathbb{Q}_p)
= \mathbb{Q}_p^\times$).
Let $\chi$ be a Dirichlet character.
There are then 
\emph{two}
equivalent
ways to define the \emph{conductor} $q
= q(\chi)$:
\begin{itemize}
\item 
(in terms of the invariance)
$\chi$ has conductor
$q$ if it
is well-defined on
$\mathbb{Z}/q \mathbb{Z}$
and if the following conditions
on a divisor $d$ of $q$ are equivalent:
\begin{itemize}
    \item $d = q$
    \item
    For all integers $n$ with
    $n - 1 \equiv 0 (d)$,
    we have $\chi(n) = 1$.
\end{itemize}
\item
(analytically)
$q = C(\chi)$,
defined using local $\gamma$-factors
(or Gauss sums) as above.
\end{itemize}

Consider now
a character $\chi$ of $\GL_1(\mathbb{R})$,
say
$\chi = |.|^{i t}$ for some real number $t$.
As noted above,
the analytic conductor
is defined by $C(\chi) = (1 + |t|)$,
and admits an asymptotic analytic
characterization via the local $\gamma$-factors.
How should one interpret $C(\chi)$
in terms of invariance, by analogy
to the second of the two characterizations
of $q(\chi)$ above?
A natural interpretation
is that $\chi$
is
\emph{approximately invariant}
under group elements
$y \in \GL_1(\mathbb{R})$
of the form $y = 1 + o(1/C(\chi))$,
but not in general
under those of the form
$y = 1 + \O(1/C(\chi))$.
More precisely,
we have the following
asymptotic characterization
of the analytic conductor
$(1 + |t|)$ of $|.|^{i t}$
up to bounded multiplicative error:
\begin{toythm}\label{toy-theorem}
For
a sequence of real
numbers $t_j$
tending off to $\infty$
and a corresponding
sequence of positive real
scaling parameters
$X_j$,
the following are equivalent:
\begin{itemize}
\item 
    $X_j / (1 + |t_j|) \rightarrow \infty$.
\item
    For all sequences
    $y_j \in \GL_1(\mathbb{R})$
    with $|y_j - 1| < 1/X_j$,
    we have
    $|y_j|^{i t_j} \rightarrow 1$.
\end{itemize}
\end{toythm}
The straightforward
proof is left to the reader.
Note the similarity
between these conditions and those
appearing above in the characterizations
of the conductor of a Dirichlet character,
with $d$ playing the role of $X_j$
and $n$ that of $y_j$.



\subsection{Notation and preliminaries}
We now prepare to describe our main results.
We denote the identity element of $\GL_r(\mathbb{R})$
by $1_r$.
When the dimension $r$ is clear from context,
we abbreviate $1 := 1_r$.


\subsubsection{Standard congruence subsets}
Let $X \geq 1$
(thought of as tending off to $\infty$)
and let $\tau \in (0,1)$
(thought of as small but fixed,
or perhaps very slowly
tending to $0$).
We define the
following
archimedean
analogues
of
the standard $p$-adic congruence subgroup
\eqref{eqn:non-arch-K0-defn}:
\begin{equation}\label{defn-congruence-subgroup}
    K_0(X,\tau):=
    \left\{\begin{pmatrix}
    a&b\\c&d
    \end{pmatrix}\in\mathrm{GL}_{n}(\R)\middle|
    a \in \GL_{n-1}(\R),
    d \in \GL_1(\R),
    \begin{aligned}
    &|a - 1_{n-1}| < \tau,
    \quad
    |b|<\tau, \\
    &|c|<\frac{\tau}{X},
    \quad
    |d-1|<\tau
    \end{aligned}
    \right\}.
\end{equation}
Here the various $|.|$ denote arbitrary fixed
norms on the various spaces of matrices.
We define $K_1(X,\tau)$
similarly,
but with the stronger constraint
$|d-1| < \tau/X$.

While the sets $K_*(X,\tau)$
are not groups,
they have some group-like
properties
in the $\tau \rightarrow 0$ limit.
For instance, it is
easy to see that if
$\tau'$ is small enough with respect to $\tau$,
then $g_1 g_2 \in K_0(X, \tau)$
for all $g_1, g_2 \in K_0(X,\tau')$.
Moreover,
these subsets enjoy the following
F{\o}lner-type property:

\begin{lemma}\label{folner-lemma}
Let $* \in \{0,1\}$.
For all $\tau_0, \delta \in (0,1)$
there exists $\tau_1 > 0$
so that for all $X \geq 1$
and all $g \in K_*(X,\tau_1)$,
the set $A := K_*(X, \tau_0)$
enjoys the following approximate
invariance property under translation by $g$:
\[
\frac{
\vol ( g A \triangle A )
}{
\vol ( A )
}
< \delta.
\]
Here
$A \triangle B
:= (A \setminus B) \cup (B \setminus A)$
denotes the symmetric difference
and $\vol$ is taken with respect
to any fixed Haar measure.
\end{lemma}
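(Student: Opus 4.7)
The plan is to parametrize elements of $K_*(X,\tau)$ by the four blocks they define as perturbations of the identity, to show that left multiplication by a fixed $g \in K_*(X, \tau_1)$ shifts these coordinates by at most $O(\tau_1)$ in the same scaled sense that appears in the definition of $K_*$, and to deduce inclusions
\[
K_*(X, \tau_0 - C\tau_1) \subseteq g A \subseteq K_*(X, \tau_0 + C\tau_1)
\]
for an absolute constant $C$. The conclusion will then follow from a direct volume computation showing that the ratio $\vol K_*(X, \tau_0 + C\tau_1)/\vol K_*(X, \tau_0 - C\tau_1)$ is $1 + O_n(\tau_1/\tau_0)$.

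To implement the first step, I would write a generic $h \in K_*(X, \tau_0)$ as $h = \begin{pmatrix} 1 + A & B \\ C & 1 + D \end{pmatrix}$ with $|A|, |B|, |D| < \tau_0$ and $|C| < \tau_0/X$ (additionally $|D| < \tau_0/X$ in the $K_1$ case), and parametrize $g \in K_*(X, \tau_1)$ analogously by $(A_1, B_1, C_1, D_1)$. Block-wise multiplication expresses the new coordinates of $gh$ as $(A + \Delta A, B + \Delta B, C + \Delta C, D + \Delta D)$, where each $\Delta$ consists of one linear term (respectively $A_1, B_1, C_1, D_1$) plus quadratic cross-terms such as $A_1 A$, $B_1 C$, $C_1 A$, $D_1 C$, $C_1 B$. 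A direct check of magnitudes under the standing bounds on $\tau_0, \tau_1$ yields $|\Delta A|, |\Delta B| \ll \tau_1$, $|\Delta C| \ll \tau_1/X$, and $|\Delta D| \ll \tau_1$ (for $K_0$) or $\ll \tau_1/X$ (for $K_1$). The essential point is that every cross-term contributing to $\Delta C$ inherits a factor of $1/X$, either directly from $C_1$ or from the $C$ factor of $h$; the analogous observation handles $\Delta D$ in the $K_1$ case. This gives $g K_*(X, \tau_0) \subseteq K_*(X, \tau_0 + C_0 \tau_1)$ for an absolute $C_0$.

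For the reverse inclusion, one checks via the Neumann expansion $g^{-1} = \sum_{k \geq 0}(1 - g)^k$ (convergent for $\tau_1$ sufficiently small) that $g^{-1} \in K_*(X, C_1 \tau_1)$ for an absolute $C_1$, the key point being that every power $(1-g)^k$ preserves the block $X$-scaling by the same bookkeeping as above. Applying the forward inclusion with $g^{-1}$ in place of $g$ gives $K_*(X, \tau_0 - C_0 C_1 \tau_1) \subseteq g K_*(X, \tau_0)$. Finally, a direct computation in the coordinates $(A,B,C,D)$ — with Haar measure comparable to Lebesgue measure because the Jacobian $|\det|^{-n}$ is bounded above and below on $K_*(X, \tau_0)$ for $\tau_0 < 1$ — yields $\vol K_*(X, \tau) \asymp_n \tau^{n^2}/X^{n-1}$ (or $\tau^{n^2}/X^n$ in the $K_1$ case). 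Hence $\vol K_*(X, \tau_0 + C\tau_1) - \vol K_*(X, \tau_0 - C\tau_1) \ll_n (\tau_1/\tau_0)\,\vol K_*(X, \tau_0)$, and choosing $\tau_1$ sufficiently small in terms of $\delta, \tau_0, n$ gives the claim. The main obstacle is the careful bookkeeping in the central step — tracking which cross-terms acquire the extra factor of $1/X$ — together with the mild subtlety of verifying that $g^{-1}$ lies in a set of the same type as $g$; once these are handled, the remainder is routine.
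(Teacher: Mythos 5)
Your proposal is correct and takes essentially the same route as the paper's proof: both establish by direct block multiplication that $gA$ is sandwiched between $K_*(X,\tau_0-c\tau_1)$ and $K_*(X,\tau_0+C\tau_1)$ (or in the paper's shorter version, just the upper inclusion $gA \subseteq K_*(X,\nu)$, with the lower bound implicit via left-invariance of Haar measure), and then bound the symmetric difference by a volume comparison using the formula $dg = |\det g|^{-n}\prod dg_{ij}$. You have simply filled in what the paper leaves as ``direct matrix multiplication'' and ``a straightforward calculation'': the explicit bookkeeping of which cross-terms inherit a factor of $1/X$, the Neumann-series verification that $g^{-1}\in K_*(X,O(\tau_1))$, and the explicit scaling $\vol K_*(X,\tau)\asymp \tau^{n^2}/X^{n-1+*}$. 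All of that is accurate and uniform in $X\geq 1$, which is the point.
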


\begin{proof}
By direct matrix multiplication,
we see
that for
any $\nu \in (\tau_0, 1)$
there exists
$\tau_1 \in (0,1)$
so that
$K_*(X,\tau_1) \cdot A \subseteq K_*(X, \nu)$.
Thus
$$\frac{\vol(gA\Delta A)}{\vol(A)}\le \frac{\vol(K_0(X,\nu))-\vol(K_0(X,\tau_0))}{\vol(K_0(X,\tau_0))}.$$
On the other hand,
by
a straightforward
calculation
using the formula
$d g = |\det g|^{-n}
\prod_{i,j=1}^n d g_{i j}$
for the Haar measure
on $\GL_n(\mathbb{R})$
in terms
of the Lebesgue measure
on the coordinates,
we see that 
if $\nu$
is chosen close enough to $\tau_0$,
then
$$
 \frac{\vol(K_0(X,\nu))-\vol(K_0(X,\tau_0))}{\vol(K_0(X,\tau_0))}
 < \delta
$$
for all $X \geq 1$.
Combining these
two inequalities, we conclude.
\end{proof}

\subsubsection{$\theta$-temperedness}
Let $\theta \geq 0$.
By the Langlands classification, we 
know that any unitary irreducible representation $\pi$ of $\GL_{n}(\R)$ is a Langlands quotient of an isobaric sum of the form
$$\sigma_1\otimes|\det|^{s_1}\boxplus\dots\boxplus\sigma_r\otimes|\det|^{s_r}.$$
where the underlying Levi of the above induction is attached to a partition of $n$ by $2$'s and $1$'s.
Here each $\sigma_i$ is either a discrete series of $\GL_2(\R)$ or a character of $\GL_1(\R)$ of the form $\sgn^{\delta}|.|^{\mu_i}$ for some $\delta\in \{0,1\}$ and $\mu_i\in i\R$. We say that  $\pi$ is \textit{$\theta$-tempered} if all such $s_i$ have real parts in $[-\theta,\theta]$. 
By
\cite{MS}
the local component
at any real place of any cuspidal automorphic representation
of $\GL(n)$ over a number field
is $\theta$-tempered 
with $\theta = 1/2 - 1/(1+n^2) < 1/2$.

\subsection {Main results}
Recall that, for a generic irreducible
representation $\pi$ of $\GL_n(F)$
over a non-archimedean local field $F$,
the conductor $C(\pi)$ is the smallest
element $X$ of the value group
for which $\pi$ admits
a nonzero $K_1(X)$-invariant vector.
We may split this statement
into three parts:
\begin{itemize}
\item
(Existence) If $X \geq C(\pi)$,
then $\pi$ contains
a nonzero $K_1(X)$-invariant vector.
\item
(Non-existence)
If $X < C(\pi)$,
then $\pi$ does not contain
any nonzero $K_1(X)$-invariant vectors.
\item
(Uniqueness)
If $X = C(\pi)$, 
then the space of $K_1(X)$-invariant
vector in $\pi$ is one-dimensional.
\end{itemize}
Each of these assertions
admits an equivalent formulation
in terms of $K_0(X)$ and the central
character of $\pi$.

We will formulate and prove
analogues
in the archimedean setting
of each of these assertions.
We start with an ``existence''
result.
\begin{theorem}[Existence of analytic newvectors]\label{existence-weak}
Fix $n \in \mathbb{Z}_{\geq 1}$ and
$\theta \in [0,1/2)$.  For each $\delta>0$ there exists $\tau>0$ with
the following property:
For each generic irreducible $\theta$-tempered unitary
representation $\pi$ of $\GL_{n}(\mathbb{R})$, there exists
a unit vector $v \in \pi$
such that
for all $g\in K_0(C(\pi),\tau)$,
$$\|\pi(g) v- \omega_{\pi}(d_g) v\|<\delta.$$
Here $\omega_{\pi}$ denotes the central
character of $\pi$ and $d_g$ denotes the lower-right
entry of $g$.
The vector $v$ may be taken
independent of $\delta$.
\end{theorem}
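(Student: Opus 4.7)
The plan is to construct $v$ via a conductor-dependent dilation of a fixed template vector, and then verify the approximate invariance by a Taylor expansion in the Lie algebra.

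\textbf{Setup and reduction via dilation.} Set $X := C(\pi)$ and $a_X := \diag(1,\ldots,1,X) \in \GL_n(\R)$. A direct matrix computation shows that for $g = \begin{pmatrix} a & b \\ c & d \end{pmatrix} \in K_0(X,\tau)$ one has
\[
a_X g a_X^{-1} = \begin{pmatrix} a & b/X \\ Xc & d \end{pmatrix} \in K_0(1,\tau),
\]
with lower-right entry preserved. Thus if $v_0 \in \pi$ is a unit vector satisfying $\|\pi(h) v_0 - \omega_\pi(d_h) v_0\| < \delta$ for all $h \in K_0(1,\tau)$, then $v := \pi(a_X^{-1}) v_0$ is still a unit vector (since $\pi$ is unitary) and satisfies the required bound on $K_0(X,\tau)$. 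Accordingly, the task reduces to producing such a template $v_0$ uniformly in the generic $\theta$-tempered representation $\pi$.

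\textbf{Template construction in the Kirillov model.} Realize $\pi$ in its Whittaker model $\W(\pi,\psi)$ with respect to a fixed unramified generic character $\psi$, and restrict to the mirabolic $P_n$ to obtain the Kirillov model on $N_{n-1}\backslash P_n(\R)$. Under the hypothesis $\theta < 1/2$, standard theory identifies this Kirillov model with $L^2(N_{n-1}\backslash P_n(\R))$ up to boundedly equivalent norms independent of $\pi$, and the mirabolic acts by the same explicit left-translation-and-character formulas for every $\pi$. I would take $v_0$ to be a fixed smooth, compactly supported, $\pi$-independent element of this flat $L^2$-model, normalized to unit norm.

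\textbf{Verification and main obstacle.} For $h \in K_0(1,\tau)$, write $h = \exp(H)$ with $\|H\| = O(\tau)$ and decompose $H$ into its mirabolic and transverse (lower-row) components. Taylor-expanding,
\[
\pi(h) v_0 - \omega_\pi(d_h) v_0 = d\pi(H_{\mathrm{mir}}) v_0 + d\pi(H_{\mathrm{low}}) v_0 - (d_h - 1) \omega_\pi'(1) v_0 + O(\tau^2).
\]
The mirabolic first-order term is bounded by $O(\tau)$ uniformly in $\pi$, since the mirabolic derivatives of $v_0$ reduce to ordinary smooth derivatives on $N_{n-1}\backslash P_n(\R)$. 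The essential obstacle is the transverse term $d\pi(E_{n,j}) v_0$ for $j<n$: this operator on the Kirillov model is not given by a $\pi$-independent formula, and a priori its norm may grow with the Langlands parameters of $\pi$. Overcoming this is the heart of the proof. One approach is to expand the action of the lower unipotent via the local functional equation of Jacquet--Piatetski-Shapiro--Shalika, which expresses it in terms of integral operators involving the gamma factor, and then exploit the compact support of $v_0$ to absorb the $\pi$-dependence. A cleaner alternative is to replace the smooth template by a suitable average over a small neighborhood in $P_n$ using Lemma~\ref{folner-lemma}, which grants approximate group invariance for free; one then appeals to $\theta$-temperedness to confirm that the averaged vector retains unit norm bounded away from zero, uniformly in $\pi$.
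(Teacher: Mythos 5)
Your dilation reduction is algebraically valid: conjugation by $a_X=\diag(1,\dots,1,X)$ does carry $K_0(X,\tau)$ into $K_0(1,\tau)$, preserves the lower-right entry $d_g$, and by unitarity transports the invariance problem from $(v,K_0(X,\tau))$ to $(v_0,K_0(1,\tau))$. But the conclusion you draw from it — that the task is to produce a single $\pi$-\emph{independent} template $v_0$ approximately invariant under the fixed set $K_0(1,\tau)$ — is not a reduction; it is a strictly false statement. Such a $v_0$ would contradict Theorem~\ref{nonexistence-weak}: applied with $\mu$ chosen so that $\mu C(\pi)\asymp 1$, that theorem rules out any unit vector $\delta$-invariant under $K_1(1,\tau)\subset K_0(1,\tau)$ once $C(\pi)$ is large. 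Concretely, if you undo your dilation on the paper's actual construction you find $v_0=\pi(a_X)\We$, whose Kirillov-model realization is $g\mapsto f(g\,a_X)$, a bump supported near $\diag(1,\dots,1,1/X)$ — manifestly $\pi$-dependent. The conductor scaling in $K_0(C(\pi),\tau)$ is the essential structure, not a normalization that can be conjugated away; your proposal discards it at exactly the moment it becomes necessary.

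You correctly identify the transverse term $d\pi(E_{n,j})v_0$ as the heart of the matter, and the local functional equation is indeed how the paper treats it (Propositions~\ref{mainprop} and~\ref{heart}, via the Whittaker--Plancherel expansion and a decomposition of spherical Whittaker functions). But that machinery delivers a bound $O(|c|)$ only after the lower-unipotent coordinate is rescaled by $1/C(\Pi)$; applied to $K_0(1,\tau)$ (post-dilation) it gives nothing uniform in $\pi$. Your ``cleaner alternative'' fails for a different reason: averaging over a small neighborhood \emph{in $P_n$} never produces approximate invariance under the lower unipotent, because $P_n$ does not contain it. If instead one averages over $K_*(C(\Pi),\tau_1)$ (as the paper does after Lemma~\ref{main-reduction}), the F\o lner property alone does not guarantee the averaged vector has norm bounded away from zero — that requires already knowing that the un-averaged bump's Whittaker function moves little under $K_0(C(\Pi),\tau_0)$, which is precisely the content of Proposition~\ref{mainprop}. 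In short: the averaging step is a finishing move, not a substitute for the analytic core. What is missing from your sketch is the entire content of that core — the Mellin/contour-shift argument applied to $\Theta(\mu,\Pi)\langle f,W_\mu\rangle$ and the partial $M$-Whittaker decomposition needed to make the contour shifts converge uniformly in $\pi$.
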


Our next result
gives a precise sense in which the conclusion
of Theorem \ref{existence-weak}
would fail if
$C(\pi)$ were replaced
with a substantially smaller quantity.
\begin{theorem}[Non-existence
of analytic newvectors]\label{nonexistence-weak}
There exists $\delta > 0$ so that for each $\tau > 0$, there
exists $\mu > 0$ so that for every $\pi$ with
$\mu C(\pi) \geq 1$, there does not exist a unit vector
$v \in \pi$ satisfying $\|\pi(g) v - v\| < \delta$ for all
$g \in K_1(\mu C(\pi), \tau)$.
\end{theorem}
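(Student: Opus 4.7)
My plan is to argue the contrapositive: given a smooth unit vector $v \in \pi$ with $\|\pi(g) v - v\| < \delta$ for all $g \in K_1(X, \tau)$, I aim to establish a linear bound of the form $C(\pi) \leq C_0(\tau,\delta) X$. Substituting $X = \mu C(\pi)$ then forces $1 \leq C_0 \mu$, so choosing $\mu$ strictly below $1/C_0$ yields the desired contradiction with the hypothesis $\mu C(\pi) \geq 1$.

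The first step is to differentiate the approximate invariance along one-parameter subgroups. For $H \in \mathfrak{gl}_n(\R)$ and $T > 0$ such that $\exp(tH) \in K_1(X, \tau)$ for all $|t| < T$, Taylor expansion of $\|\pi(\exp tH) v - v\|^2 = t^2 \|\pi(H) v\|^2 + O(t^3)$ (valid on smooth unit vectors via the skew-Hermiticity of $\pi(H)$) combined with the invariance bound produces $\|\pi(H) v\| \lesssim \delta/T$. The admissible scales are $T \asymp \tau$ for $H = E_{ij}$ with $i \leq n-1$, and the much tighter $T \asymp \tau/X$ for $H$ in the last-row directions $E_{nj}$. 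Controlling the cubic remainder rigorously requires first replacing $v$ by a smooth approximant (convolved against a test function supported near the identity), which by Lemma~\ref{folner-lemma} preserves the approximate invariance up to a controlled loss.

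The second, more delicate step is to convert these per-direction Lie-algebra bounds into the desired upper bound on $C(\pi) = \prod_j(1+|\mu_j|)$. A direct assembly via the Casimir $\Omega = \sum_{ij} E_{ij} E_{ji} \in \mathcal{Z}(U(\mathfrak{gl}_n(\R)))$ yields only a polynomial estimate $C(\pi) \lesssim_n (\delta X/\tau)^n$, since the lower bound $|\lambda_\Omega(\pi)| \gtrsim C(\pi)^{2/n}$ (from AM-GM on $C(\pi) = \prod(1+|\mu_j|)$) must be matched against the upper bound $|\lambda_\Omega(\pi)| = |\langle \pi(\Omega) v, v \rangle| \lesssim (\delta X/\tau)^2$, dominated by the $E_{nn}^2$ contribution. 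This polynomial bound is \emph{too weak}: after substituting $X = \mu C(\pi)$ it only forces a lower bound on $C(\pi)$, not an upper one. The required linear bound should instead emerge from a finer Whittaker-model analysis: transcribing the approximate invariance of $v$ into approximate right-invariance of the Whittaker function $W_v$ under $K_1(X,\tau)$, then matching the resulting constraints against the Jacquet-Piatetski-Shapiro-Shalika zeta integrals that compute $L(s,\pi)$, whose functional equation $L(s,\pi)\gamma(s,\pi,\psi) = L(1-s,\widetilde{\pi})$ encodes $C(\pi)$ linearly via the local gamma factor. This should separate the individual parameters $\mu_j$, yielding $|\mu_j| \lesssim_{\tau,\delta} X$ and hence the required bound.

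I expect the main obstacle to be closing this Whittaker-theoretic argument quantitatively for an only approximately invariant vector, rather than an exact newvector. The relevant tools—continuity of the Whittaker functional in a Casselman-Wallach-type topology, and quantitative Jacquet asymptotic expansions for $W_v$ along the torus—are standard, but require careful bookkeeping to absorb the $\delta$-errors inherent in the hypothesis. The approach mirrors the uniqueness half of non-archimedean newvector theory, where minimality of the conductor is extracted from the gamma-factor functional equation; here the same structural input is used, but translated into the quantitative analytic setting.
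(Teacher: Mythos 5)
Your proposal correctly identifies that a Lie-algebra/Casimir argument is too weak (giving only a polynomial bound $C(\pi)\lesssim(X/\tau)^n$, which after substituting $X=\mu C(\pi)$ degenerates into a useless lower bound on $C(\pi)$), and you correctly identify the local functional equation/$\gamma$-factor as the source of the linear scaling $C(\pi)\lesssim X$. But the proposal has a genuine gap at the point where it matters most: you cannot actually run the ``finer Whittaker-model analysis'' directly on the given vector $v$, because $v$ is an abstract unit vector over which you have no control beyond the approximate-invariance inequality. You have no formula for its Kirillov-model restriction, no way to locate it relative to the Whittaker--Plancherel decomposition, and the Jacquet--Piatetski-Shapiro--Shalika zeta integrals of an \emph{unknown} vector do not come with lower bounds you can play off against the $\gamma$-factor. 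This is exactly the obstacle you flag at the end, but it is not a technicality to be absorbed by ``careful bookkeeping''; it is the hole in the argument.

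The paper closes this gap with a different organizing idea: pass from the individual vector $v$ to the \emph{trace} of the operator $\pi(F_X)$, where $F_X$ is a normalized majorant of $K_1(X,\tau)$ chosen to be a self-convolution so that $\pi(F_X)$ is positive semi-definite. The approximate invariance of $v$ then gives the one-line lower bound
\begin{equation*}
\Tr\bigl(\pi(F_X)\bigr)\;\geq\;\langle \pi(F_X)v,\,v\rangle \;=\; 1+\langle \pi(F_X)v-v,\,v\rangle\;\geq\;1-\delta,
\end{equation*}
using only positivity and Cauchy--Schwarz; no information about \emph{which} vector $v$ is needed. The decisive upper bound $\Tr(\pi(F_X))\ll_{\tau,\sigma}(C(\pi)/X)^{-n\sigma}$ (Theorem~\ref{trace-estimates}) is where the Whittaker--Plancherel expansion, the Bessel/relative-trace distribution, and the $\gamma$-factor functional equation actually enter --- applied to the trace distribution, not to $v$. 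Combining the two estimates and substituting $X=\mu C(\pi)$ gives the contradiction for $\mu$ small. So the structural input you identified (the local functional equation) is indeed the right one, but it is applied to the operator $\pi(F_X)$ rather than to the vector $v$; that shift of perspective, from vectors to traces, is the missing idea.
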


It is instructive to reformulate the above
results in terms of sequences.
\begin{defn}[Level of a sequence of vectors]\label{defn:level}
Let $\pi_j$
be a sequence
of generic $\theta$-tempered
irreducible unitary representations
of $\GL_{n}(\mathbb{R})$.
Let $v_j \in \pi_j$ be unit vectors,
and let $X_j > 0$.
We say that the sequence
$(v_j)$ has \emph{level} $(X_j)$
if for any
sequence $\tau_j$ of positive numbers
tending to zero
and any sequence of group elements
$g_j \in K_0(C(X_j), \tau_j)$,
we have
\[
\lim_{j \rightarrow \infty}
\| \pi(g_j) V_j - \omega_{\pi_j}(d_{g_j})
V_j \| = 0.
\]
\end{defn}

\begin{theorem}[Existence and uniqueness, sequential form]\label{thm:existi-unique-sequences}
Let $\pi_j$ be as in 
Definition \ref{defn:level}.
\begin{itemize}
\item (Existence)
There is a sequence $(v_j)$ of unit
vectors with level $(C(\pi_j))$.
\item (Non-existence)
Assume that $C(\pi_j) \rightarrow \infty$.
For any sequence $\mu_j$ positive numbers tending to zero,
there does \emph{not} exist
a sequence $(v_j)$ of unit vectors
with level $(\mu_j C(\pi_j))$.
\end{itemize}
\end{theorem}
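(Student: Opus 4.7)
The plan is to deduce both parts of Theorem \ref{thm:existi-unique-sequences} directly from Theorems \ref{existence-weak} and \ref{nonexistence-weak} by unpacking the quantifiers in Definition \ref{defn:level}; I read the expression $K_0(C(X_j),\tau_j)$ there as the intended $K_0(X_j,\tau_j)$. For the existence part, observe that Theorem \ref{existence-weak} produces, for each $\pi_j$, a single unit vector $v_j$ that works simultaneously for all $\delta > 0$: there is a $\tau(\delta) > 0$ depending only on $\delta$ (and on $n, \theta$) such that $\|\pi_j(g) v_j - \omega_{\pi_j}(d_g) v_j\| < \delta$ for every $g \in K_0(C(\pi_j), \tau(\delta))$. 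Given any $\tau_j \to 0$, any $g_j \in K_0(C(\pi_j), \tau_j)$, and any fixed $\delta > 0$, we eventually have $\tau_j < \tau(\delta)$, so $g_j$ lies in the smaller set $K_0(C(\pi_j), \tau(\delta))$ and the bound applies. Letting $\delta \to 0$ gives the level property.

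For the non-existence part, suppose for contradiction that $(v_j)$ has level $(\mu_j C(\pi_j))$ with $\mu_j \to 0$ and $C(\pi_j) \to \infty$. A standard diagonalization shows that the level hypothesis is equivalent to the following uniform statement: for every $\epsilon > 0$ there exist $\tau_\epsilon > 0$ and $J_\epsilon$ with
\[
\sup_{g \in K_0(\mu_j C(\pi_j),\, \tau_\epsilon)} \|\pi_j(g) v_j - \omega_{\pi_j}(d_g) v_j\| < \epsilon \quad \text{for all } j \geq J_\epsilon;
\]
indeed, otherwise one could extract $\tau_j \to 0$ and $g_j \in K_0(\mu_j C(\pi_j), \tau_j)$ violating the level condition. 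Let $\delta_0 > 0$ denote the constant from Theorem \ref{nonexistence-weak}; applying that theorem at a small fixed $\tau$ yields $\mu(\tau) > 0$. Since $\mu_j \to 0$, eventually $\mu_j \leq \mu(\tau)$, and then
\[
K_1(\mu(\tau) C(\pi_j), \tau) \subseteq K_0(\mu(\tau) C(\pi_j), \tau) \subseteq K_0(\mu_j C(\pi_j), \tau),
\]
so the reformulation with $\epsilon = \delta_0/3$ and $\tau_\epsilon = \tau$ gives $\|\pi_j(g) v_j - \omega_{\pi_j}(d_g) v_j\| < \delta_0/3$ on $K_1(\mu(\tau) C(\pi_j), \tau)$ for $j$ large.

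To upgrade this into $\|\pi_j(g) v_j - v_j\| < \delta_0$ and thereby contradict Theorem \ref{nonexistence-weak}, we must bound $|\omega_{\pi_j}(d_g) - 1|$ on $K_1(\mu(\tau) C(\pi_j), \tau)$. There $|d_g - 1| < \tau/(\mu(\tau) C(\pi_j))$, and since the parameter of the unitary central character $\omega_{\pi_j}$ is bounded by $O(C(\pi_j))$, this difference is $O(\tau/\mu(\tau))$; choosing $\tau$ small relative to $\mu(\tau)$ — permissible, since we may re-invoke Theorem \ref{nonexistence-weak} at any smaller $\tau$ — renders this contribution smaller than $\delta_0/3$, completing the contradiction. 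The main obstacle is precisely this coordination between $\tau$ and $\mu(\tau)$ needed to absorb the central character; the cleanest workaround would be to establish a $K_0$-version of Theorem \ref{nonexistence-weak} phrased as $\|\pi(g) v - \omega_\pi(d_g) v\| < \delta$ for $g \in K_0(\mu C(\pi), \tau)$, after which the above reduction becomes entirely automatic.
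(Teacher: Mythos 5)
Your existence argument is the paper's own, nearly verbatim, and is correct. For the non-existence part you take a slightly different but essentially equivalent route: the paper constructs a single sequence $\tau_j \to 0$ slowly enough that $\mu(\tau_j) > \mu_j$ and $\mu(\tau_j)C(\pi_j) \geq 1$, notes that level $(\mu_j C(\pi_j))$ implies level $(\mu(\tau_j) C(\pi_j))$, and then invokes Theorem~\ref{nonexistence-weak} at the pair $(\tau_j, \mu(\tau_j))$; you instead first recast the level hypothesis into the uniform statement (your ``for every $\epsilon$ there exist $\tau_\epsilon, J_\epsilon$\ldots'') and then work with a single fixed $\tau$. Both arguments need the same diagonalization step behind the scenes to pass from ``$\to 0$ along sequences'' to a uniform $\sup$ bound, so there is no real difference in content; your version just makes that step explicit.

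The central-character point you raise at the end is genuine, and you are right to flag it rather than assert it is handled. The bound $|\omega_{\pi_j}(d_g)-1| \ll \tau/\mu(\tau)$ is not controllable simply by re-invoking Theorem~\ref{nonexistence-weak} at a smaller $\tau$, since $\mu(\tau)$ shrinks along with $\tau$ and nothing rules out $\tau/\mu(\tau)$ growing. The paper's own proof does not resolve this either: it silently assumes that the level hypothesis for $K_0$-with-twist yields the hypothesis for $K_1$-without-twist, and in \S7 it explicitly restricts to trivial central character (``In this subsection we assume\ldots trivial central character\ldots One can modify the statements and proofs for general central character\ldots''). Your suggested workaround --- state and prove a $K_0$-twist version of Theorem~\ref{nonexistence-weak} --- is exactly right, and the mechanism the paper uses elsewhere for precisely this bookkeeping is the identity
\[
\|\pi(g)v - \omega_\pi(d_g)\,v\| \;=\; \|\pi(g\,d_g^{-1})v - v\|,
\]
valid by unitarity of $\omega_\pi$, which sends the $K_0$-twist condition to the $K_1$-untwist condition restricted to elements with lower-right entry exactly~$1$ and thereby eliminates any need to bound $|\omega_\pi(d_g)-1|$. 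With that reformulation your argument closes; without it the final step, as you yourself note, is not complete, but that incompleteness is shared with the paper's terse argument.
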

\begin{proof}
This is a direct translation
of Theorems \ref{existence-weak}
and \ref{nonexistence-weak}.
Indeed, let us first verify
the existence assertion.
Let $v_j \in \pi_j$ be as furnished
by Theorem \ref{existence-weak},
and let $\tau_j, g_j$ be as in
Definition \ref{defn:level}.
We must check that
for each fixed $\delta > 0$, we have
the inequality
$\| \pi(g_j) V_j - \omega_{\pi_j}(d_{g_j})
V_j \| < \delta$
for large enough $j$.
Indeed, taking $\tau > 0$
as in
Theorem \ref{existence-weak},
the required inequality holds
whenever $\tau_j < \tau$.

Turning to non-existence,
suppose otherwise
that there are
$\mu_j > 0$ and $v_j \in \pi_j$
with
$\mu_j \rightarrow 0$
and $(v_j)$ of level $(\mu_j C(\pi_j))$.
Fix $\delta > 0$ as in Theorem \ref{nonexistence-weak}.
For each $\tau > 0$,
we choose $\mu(\tau) > 0$
so that the conclusion
of
Theorem \ref{nonexistence-weak}
holds.
Since $\mu_j \rightarrow 0$
and $C(\pi_j) \rightarrow \infty$,
we may construct a sequence
$\tau_j$
tending to zero sufficiently
slowly
that for large $j$,
we have
$\mu(\tau_j) > \mu_j$
and $\mu (\tau_j) C(\pi_j) \geq 1$.
Our hypotheses
imply that $(v_j)$ is of level
$(\mu_j C(\pi_j))$, hence also
of level $(\mu(\tau_j) C(\pi_j))$.
The hypothesized
existence of the vector $v_j \in \pi_j$
thus contradicts the conclusion
of Theorem \ref{nonexistence-weak}
for large $j$.
\end{proof}

We will informally
refer to any vector $v$ satisfying
the conclusion
of Theorem \ref{existence-weak}
as an analytic newvector.
This notion depends upon the parameters
$\delta$ and $\tau$, which we assume in practice
to be sufficiently small.
More formally,
we might refer to a sequence
$(v_j)$ of level $(C(\pi_j))$
as an ``analytic newvector sequence.''

Theorem \ref{thm:existi-unique-sequences}
may be understood
as characterizing the analytic conductor $C(\pi)$,
up to  bounded multiplicative error,
in terms of the invariance properties of vectors in $\pi$:
\begin{theorem}
Let $\mathcal{P}$
denote the set of isomorphism
classes of generic $\theta$-tempered
irreducible unitary representations
of $\GL_{n}(\mathbb{R})$.
Let $Q : \mathcal{P} \rightarrow \mathbb{R}_{\geq 1}$ be a function
with the following properties:
\begin{enumerate}
    \item For each
    sequence $\pi_j \in \mathcal{P}$,
    there is a sequence of unit vectors
    $v_j \in \pi_j$
    so that $(v_j)$
    has level $(Q(\pi_j))$.
    \item
    For each sequence $\pi_j \in \mathcal{P}$ with $C(\pi_j) \rightarrow \infty$
    and $\mu_j > 0$ with $\mu_j \rightarrow 0$,
    there does not exist
    a sequence
    of unit vectors
    $v_j \in \pi_j$
    for which $(v_j)$ has level
    $(\mu_j Q(\pi_j))$.
\end{enumerate}
Then there are constants
$c_2 > c_1 > 0$
and $c_0 \geq 1$
so that
for all $\pi \in \mathcal{P}$
with $C(\pi) \geq c_0$,
we have
\[
c_1 C(\pi) \leq Q(\pi) \leq c_2 C(\pi).
\]
\end{theorem}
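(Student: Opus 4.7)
The plan is to establish both inequalities by contradiction, using Theorem \ref{thm:existi-unique-sequences} (that $C(\pi)$ itself satisfies properties (1) and (2) in the statement) as a template. The essential observation is that these two properties together pin down the ``correct order of magnitude'' of $Q(\pi)$ as $C(\pi) \to \infty$: property (1) of $Q$ combined with the non-existence part of Theorem \ref{thm:existi-unique-sequences} forces $Q(\pi)$ not to be asymptotically much smaller than $C(\pi)$, and property (2) of $Q$ combined with the existence part forces $Q(\pi)$ not to be asymptotically much larger than $C(\pi)$.

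First I would prove the upper bound $Q(\pi) \le c_2 C(\pi)$. Suppose no such $c_0, c_2$ exist. Then I can extract a sequence $\pi_j \in \mathcal{P}$ with $C(\pi_j) \to \infty$ and $Q(\pi_j)/C(\pi_j) \to \infty$. Set $\mu_j := C(\pi_j)/Q(\pi_j)$, so $\mu_j \to 0$. By the existence part of Theorem \ref{thm:existi-unique-sequences}, there is a sequence $(v_j)$ of unit vectors $v_j \in \pi_j$ of level $(C(\pi_j))$; but by construction $C(\pi_j) = \mu_j Q(\pi_j)$, so $(v_j)$ has level $(\mu_j Q(\pi_j))$, directly contradicting property (2) of $Q$.

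Next I would prove the lower bound $c_1 C(\pi) \le Q(\pi)$ by the symmetric argument. Suppose no such $c_0, c_1$ exist. Extract a sequence $\pi_j$ with $C(\pi_j) \to \infty$ and $Q(\pi_j)/C(\pi_j) \to 0$, and set $\mu_j := Q(\pi_j)/C(\pi_j) \to 0$. Property (1) of $Q$ supplies a sequence of unit vectors $(v_j)$ of level $(Q(\pi_j)) = (\mu_j C(\pi_j))$, contradicting the non-existence part of Theorem \ref{thm:existi-unique-sequences}.

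There is really no serious obstacle here: the entire content is a bookkeeping exercise verifying that the characterization of ``conductor'' furnished by Theorem \ref{thm:existi-unique-sequences} is rigid up to bounded multiplicative error. The only mild subtlety is ensuring that the extracted sequences genuinely satisfy $C(\pi_j) \to \infty$ (which is automatic in the upper-bound case and also follows in the lower-bound case, since $Q \ge 1$ forces $C(\pi_j) \to \infty$ whenever $Q(\pi_j)/C(\pi_j) \to 0$ along a sequence violating any fixed $c_1$), so that the hypothesis $C(\pi_j) \to \infty$ in property (2) of $Q$ and in Theorem \ref{thm:existi-unique-sequences} is met.
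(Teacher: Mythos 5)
Your proof is correct and is essentially identical to the paper's own argument: both proceed by contradiction, extracting a sequence along which the ratio $Q(\pi_j)/C(\pi_j)$ tends to zero or to infinity, and then playing the existence/non-existence halves of Theorem \ref{thm:existi-unique-sequences} against properties (1) and (2) of $Q$. The only cosmetic difference is that you treat the two bounds separately rather than via a single dichotomy on the ratio, and your closing remark about $C(\pi_j)\to\infty$ being ``automatic'' from $Q\ge 1$ is not quite needed (nor quite correct as stated for the upper-bound case), since the growth of $C(\pi_j)$ is already built into the extraction by letting $c_0\to\infty$.
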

\begin{proof}
  If the conclusion
  fails,
  then we may find a sequence
  $\pi_j \in \mathcal{P}$
  with $C(\pi_j) \rightarrow \infty$
  so that the ratio
  $\mu_j := Q(\pi_j)/C(\pi_j)$
  tends either to zero or to
  infinity.
  If it tends to infinity,
  so that $\mu_j^{-1} \rightarrow 0$,
  then there is no
  sequence $(v_j)$
  of level $(\mu_j^{-1} Q(\pi_j)) = C(\pi_j)$,
  contrary
  to the existence
  assertion of Theorem
  \ref{thm:existi-unique-sequences}.
  If it tends to zero,
  then there is a sequence
  $(v_j)$ of level $(Q(\pi_j)) = (\mu_j C(\pi_j))$,
  contrary to the non-existence
  assertion of Theorem
  \ref{thm:existi-unique-sequences}.
  In either case, we obtain
  the required contradiction.
\end{proof}

Theorem \ref{nonexistence-weak}
admits the following quantitative
refinement.
\begin{theorem}\label{nonexistence}
Let $0 \leq \theta < 1/2$,
$\tau > 0$,
$\delta > 0$.
Let $X > 1$ be sufficiently large.
Let $\pi$ 
be a generic $\theta$-tempered 
irreducible unitary representation
of $\GL_{n}(\mathbb{R})$.
Suppose that
there is a unit vector $v$ satisfying $\|\pi(g) v - v \| < \delta$ for all $g \in K_1(X,\tau)$.
Then
\[
C(\pi) \ll_{\tau,\sigma} X ( 1 - \delta)^{1/n \sigma},
\]
for any $0<\sigma<1/2-\theta$.
\end{theorem}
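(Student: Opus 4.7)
The plan is to extract the bound on $C(\pi)$ from the local functional equation of a Jacquet--Piatetski-Shapiro--Shalika zeta integral attached to $\pi$, combined with the Stirling-type asymptotic
\[
|\gamma(1/2+\sigma+it,\pi)| \asymp_\sigma C(\pi)^{\sigma} (1+|t|)^{n\sigma}, \qquad |t|\to\infty,
\]
of the local $\gamma$-factor, valid for $0<\sigma<1/2-\theta$ (which is where both $\theta$-temperedness and the constraint on $\sigma$ enter: they guarantee convergence of the relevant zeta integrals inside the critical strip and uniformity of the Stirling asymptotic across $\pi$).

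First, pass to the Kirillov--Whittaker model of $\pi$ and let $W_v$ denote the Whittaker function attached to $v$. The approximate invariance $\|\pi(g)v-v\|<\delta$ for $g\in K_1(X,\tau)$ translates into approximate invariance of $W_v$ under translation by such $g$. Since the mirabolic acts transparently on the Whittaker model, only the lower-left entries of $g$ (bounded by $\tau/X$) and the lower-right entry $d$ with $|d-1|<\tau/X$ produce a nontrivial perturbation of $W_v$; the upper-left block and the $b$-coordinates act essentially as the identity. Under $\theta$-temperedness, the Kirillov norm controls $W_v$ in the relevant Sobolev sense.

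Next, form the Rankin--Selberg zeta integral
\[
Z(s,W_v,\phi) := \int_{N_{n-1}\backslash \GL_{n-1}(\R)} W_v\!\begin{pmatrix}g&0\\0&1\end{pmatrix} \phi(e_n g)\,|\det g|^{s}\,dg,
\]
where $\phi$ is a Schwartz function on $\R^n$ of unit scale adapted to the shape of an analytic newvector (as furnished by Theorem \ref{existence-weak}). Evaluate on the line $\Re s = 1/2 + \sigma$ with $|t| := |\Im s|$ to be optimized. The approximate invariance of $W_v$, together with a comparison to an idealized newvector Whittaker function, yields a lower bound of the form $|Z(s,W_v,\phi)| \gtrsim_{\tau,\sigma}(1-\delta)$, while a direct estimate bounds $|Z(1-s,\tilde W_v, \hat\phi)|$ from above in terms of $X$ (reflecting the fact that $\tilde W_v$ and $\hat\phi$ concentrate at a scale dictated by $X$). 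The local functional equation $Z(1-s, \tilde W_v, \hat\phi) = \gamma(s,\pi) Z(s,W_v,\phi)$ combined with the Stirling asymptotic, applied at the optimal choice $|t| \asymp C(\pi)$, forces the relation $C(\pi)^{n\sigma}(1-\delta) \lesssim X^{n\sigma}$, whence the stated bound on $C(\pi)$ by taking $n\sigma$-th roots; the factor $n$ in the exponent $1/(n\sigma)$ is exactly the one appearing in the product of $n$ gamma factors defining $\gamma(s,\pi)$.

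The principal obstacle is producing the clean $(1-\delta)$ factor in the lower bound for $|Z(s,W_v,\phi)|$ \emph{uniformly} in the parameters of $\pi$: the argument must show that $W_v$ cannot be nearly orthogonal to the model newvector Whittaker function without violating the approximate invariance under $K_1(X,\tau)$, with the gap quantified sharply by $\delta$. Here one uses that in the Kirillov model the averaging operator $\int_{K_1(X,\tau)}\pi(g)\,dg$ concentrates mass near the diagonal orbit associated with the idealized newvector, so an approximate eigenvector with eigenvalue $\approx 1$ must agree with it up to an error controlled by $\delta$.
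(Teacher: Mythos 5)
Your proposal takes a genuinely different route from the paper, and it contains a critical gap. The paper proves the statement with a short positivity argument: choose a normalized majorant $F_X$ of $K_1(X,\tau)$ of self-convolution form (cf.\ Lemma \ref{convolution-normalized-majorant}) so that $\pi(F_X) \geq 0$; then, since $\int F_X \asymp 1$ and $\|\pi(g)v - v\| < \delta$ for all $g$ in the support of $F_X$, Cauchy--Schwarz gives
\[
\Tr(\pi(F_X)) \;\geq\; \langle \pi(F_X) v, v \rangle \;=\; 1 + \langle \pi(F_X)v - v, v\rangle \;\geq\; 1 - \delta,
\]
and Theorem \ref{trace-estimates} furnishes the upper bound $\Tr(\pi(F_X)) \ll_{\tau,\sigma} (C(\pi)/X)^{-n\sigma}$; combining the two bounds gives the claim. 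The zeta integral, Whittaker--Plancherel formula, and Stirling estimates on $\gamma$-factors do appear, but only inside the proof of Theorem \ref{trace-estimates}, applied to the Bessel distribution and the trace --- quantities that are summed over an entire orthonormal basis. Your proposal instead argues directly with the Rankin--Selberg zeta integral $Z(s, W_v, \phi)$ of the single given vector $v$.

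The gap lies in the asserted lower bound $|Z(s,W_v,\phi)| \gtrsim (1-\delta)$. This would require $v$ to be quantitatively close to the model analytic newvector, and you justify it by claiming that $\int_{K_1(X,\tau)} \pi(g)\,dg$ concentrates mass onto a single vector. That claim is false: for fixed $\tau$ this averaging operator is close to the identity, not to a rank-one projection, and it does not single out any one almost-invariant vector. Indeed, the paper's own Theorem \ref{uniqueness} allows up to $O_\tau((1-\delta)^{-1})$ mutually orthogonal unit vectors with the stated invariance property; for a $v$ nearly orthogonal to the model newvector, the zeta integral $Z(s,W_v,\phi)$ against a fixed $\phi$ can be arbitrarily small, killing your lower bound. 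The paper's trace argument is immune to this: the trace is independent of the choice of orthonormal basis, so the hypothesis on $v$ forces a contribution $\geq 1-\delta$ no matter how $v$ sits relative to the newvector. To make your zeta-integral route work you would in effect have to sum $Z$ over a basis, at which point you are recomputing $J_\pi(F_X)$ or $\Tr(\pi(F_X))$ and have rederived the paper's proof.

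A secondary issue: your stated asymptotic $|\gamma(1/2+\sigma+it,\pi)| \asymp_\sigma C(\pi)^{\sigma}(1+|t|)^{n\sigma}$ is incorrect in the regime where $|t|$ is comparable to the archimedean parameters of $\pi$ (conductor dropping). The correct uniform version, recorded as Lemma \ref{boundgammafactor}, reads $\gamma(1/2-s,\pi) \asymp C(\pi\otimes|\det|^{\Im(s)})^{\Re(s)}$, and $C(\pi\otimes|\det|^t) = \prod_j(1+|\mu_j + it|)$ is not $\asymp C(\pi)(1+|t|)^n$ when some $|\mu_j|\asymp|t|$. The ``optimal choice $|t|\asymp C(\pi)$'' would then not produce the factor you need without further analysis.
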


We finally
record a sense
in which analytic newvectors
are unique.
``Multiplicity one''
seems too much to hope 
for in the analytic setting,
so we settle for a form of
``bounded multiplicity'':
\begin{theorem}[Uniqueness]\label{uniqueness}
Fix $0\le \theta<1/2$ and $\tau>0$, and let $0<\delta<1$. Then for every generic irreducible unitary $\theta$-tempered representation $\pi$ of $\GL_{n}(\R)$ there can exist at most $O_\tau((1-\delta)^{-1})$ mutually orthogonal unit vectors $v\in\pi$ satisfying $\|\pi(g)v-v\|<\delta$ for all $g\in K_1(C(\pi),\tau)$.
\end{theorem}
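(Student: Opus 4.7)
The plan is to reduce the statement to a uniform Hilbert--Schmidt estimate via an averaging-operator argument. Using the group-like properties of the subsets $K_*(X, \tau)$ developed in the proof of Lemma \ref{folner-lemma}, choose a symmetric subset $B$ satisfying $B \cdot B \subseteq K_1(C(\pi), \tau)$ and $\vol(B) \asymp_\tau \vol(K_1(C(\pi), \tau))$, uniformly in $C(\pi)$. Set $g := \vol(B)^{-1} \mathbf{1}_B$, which is self-adjoint as a convolution kernel (i.e.\ $g^* = g$), and let $S := \pi(g)^2 = \pi(g * g)$, a positive self-adjoint operator on $\pi$. Since $g * g$ is a probability measure supported in $B \cdot B \subseteq K_1(C(\pi), \tau)$, any unit vector $v$ satisfying the hypothesis of the theorem obeys
\[
\|S v - v\| \leq \int (g * g)(h)\, \|\pi(h) v - v\|\, dh < \delta,
\]
and hence $\langle S v, v \rangle > 1 - \delta$, noting that $\langle Sv, v \rangle$ is real by self-adjointness.

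For mutually orthogonal unit vectors $v_1, \dotsc, v_N$ each satisfying the hypothesis, summation yields
\[
N (1 - \delta) < \sum_{i=1}^{N} \langle S v_i, v_i \rangle \leq \mathrm{Tr}(S) = \|\pi(g)\|_{\mathrm{HS}}^{2},
\]
the last equality using positivity of $S = \pi(g)^* \pi(g)$. The proof therefore reduces to establishing a uniform Hilbert--Schmidt bound $\|\pi(g)\|_{\mathrm{HS}}^{2} \leq C_{\tau, n, \theta}$, independent of $\pi$; this immediately gives $N \leq C_{\tau, n, \theta}/(1 - \delta) = O_\tau((1-\delta)^{-1})$.

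This Hilbert--Schmidt estimate is the main step and is \emph{a priori} delicate, since the $L^\infty$-norm of $g * g$ is of order $\vol(B)^{-1} \asymp C(\pi)^n$, which tends to infinity as $C(\pi) \to \infty$. I would establish the bound by realising $\pi$ in its Whittaker model: $\pi(g)^2$ becomes an explicit integral operator on Whittaker functions, and $\|\pi(g)\|_{\mathrm{HS}}^{2} = \mathrm{Tr}(\pi(g * g))$ can then be computed as an integral over the $(n-1)$-dimensional diagonal torus against the Kirillov measure. The key cancellation is that Whittaker functions of $\pi$ are concentrated at the conductor scale $C(\pi)$---i.e.\ at scales determined by the Langlands parameters $\mu_j$ with $\prod_j (1 + |\mu_j|) = C(\pi)$---which precisely compensates the $C(\pi)^n$ blow-up of $\vol(B)^{-1}$. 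The $\theta$-temperedness hypothesis $\theta < 1/2$ is required to guarantee uniform convergence of this torus integral over the set of admissible Langlands parameters. The main obstacle is precisely this uniform trace cancellation, an analytic analogue of the uniqueness of Whittaker models underlying the classical non-archimedean newvector uniqueness.
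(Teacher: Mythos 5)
Your reduction is sound and matches the paper's strategy in outline: both arguments take a nonnegative, approximately $K_1(C(\pi),\tau)$-concentrated test function whose associated operator is positive semidefinite, observe that $\langle \pi(F) v, v\rangle \geq 1-\delta$ for any unit vector $v$ satisfying the approximate invariance, sum over an orthonormal family to bound the count by $\Tr(\pi(F))/(1-\delta)$, and then invoke a uniform trace estimate $\Tr(\pi(F)) \ll_\tau 1$. (The paper works with $K_0$ and the central character and says the $K_1$ version is proved identically, but this is cosmetic.)

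The genuine gap is that you do not prove the trace/Hilbert--Schmidt estimate, and that estimate is the entire content of the theorem. It is stated in the paper as Theorem \ref{trace-estimates} and its proof is not a quick Whittaker-model computation: it goes through Proposition \ref{estimating-projection-shrinking-support}, which itself rests on the Whittaker--Plancherel expansion, the $\GL(n+1)\times\GL(n)$ local functional equation, the contour-shift analysis of $\Theta(\mu,\Pi)$, and crucially the uniform decay estimate for the shifted Whittaker newvector in Proposition \ref{heart} --- the most technical result in the paper (requiring the decomposition of the spherical Whittaker function into $M$-Whittaker functions, the $\pop$ case analysis, etc.). Your sketch --- ``Whittaker functions of $\pi$ are concentrated at the conductor scale, which compensates the $C(\pi)^n$ blow-up'' --- names the right phenomenon but does not constitute a proof of it; the compensation is precisely what takes dozens of pages to establish uniformly in $\pi$. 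You label this ``the main obstacle,'' which is accurate, but it means the proposal defers the actual mathematical content.

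Two smaller issues. First, your test function $g = \vol(B)^{-1}\mathbf{1}_B$ is not smooth, and the paper's proof of the trace estimate integrates by parts repeatedly against the Casimir-type operator $\D$, which requires the derivative bounds built into the notion of ``normalized majorant'' (Definition \ref{defn:normalized-majorant}, condition (3)). You would need to mollify $\mathbf{1}_B$ first; this is harmless but not free. Second, the identification of $\Tr(\pi(g*g))$ with a single torus integral ``against the Kirillov measure'' is too optimistic: the paper obtains the trace as $\int_{N\backslash G} \Pi(f^h)\delta(1)\,dh$, i.e.\ an $N\backslash G$-integral of a conjugated Bessel distribution, and controlling that integral is where the local functional equation and the uniform Whittaker bounds enter. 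Your stated reason for needing $\theta < 1/2$ (convergence of a torus integral over Langlands parameters) is also not quite where the hypothesis is used; in the paper it is needed to keep $\gamma(1/2-s,\Pi\otimes\bar\pi)$ holomorphic during the contour shift and to have a positive range $0 < \sigma < 1/2 - \theta$ in which the zeta integral converges absolutely.
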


In particular, if $0<\delta<1$ and $\tau>0$ are fixed,
then there are at most
a bounded number of mutually orthogonal unit vectors that are $\delta$-invariant under $K_1(C(\pi),\tau)$.


\subsection{Analytic newvectors
as Whittaker functions}
We fix a generic additive character $\tilde{\psi}$ of the standard maximal unipotent subgroup of $\GL_{n}(\R)$, consisting of upper-triangular unipotent matrices, as defined in \eqref{add-char-of-n}.
We denote by $\W(\pi,{\psi})$
the Whittaker model of a generic irreducible representation $\pi$ of $\GL_{n}(\R)$ 
(see \S\ref{sec:whittaker-kirillov} for details).

Theorem \ref{existence-weak}
gives a simple sense
in which the analytic conductor
controls the invariance properties of vectors.
Theorem \ref{existence-strong} 
is a more powerful yet more technical result
with additional features
that we expect to be useful in applications.
\begin{theorem}\label{existence-strong}
Fix $n \in \mathbb{Z}_{\geq 1}$ and
$\theta \in [0,1/2)$,
let $\Omega$ be a bounded open subset
of $\GL_{n-1}(\mathbb{R})$,
and let $\iota > 0$ be small enough
in terms of $n$ and $\Omega$.
For each $\delta>0$ there exists $\tau>0$ with the following property:

For each
generic irreducible $\theta$-tempered 
unitary representation 
$\pi$
of 
$\GL_{n}(\mathbb{R})$,
there exists
an element $\We\in\W(\pi,{\psi})$ 
of its Whittaker model
satisfying
\begin{itemize}
    \item the normalization $\|\We\|=1$,
    with the norm taken in the Kirillov model
    (\S\ref{sec:whittaker-kirillov}),
    \item the lower bound
    $\We\left[\begin{pmatrix}h&\\&1\end{pmatrix}\right] \geq \iota$ for all $h \in \Omega$,
    and
    \item 
    the invariance properties:
    \begin{enumerate}
        \item     for all $g\in K_0(C(\pi),\tau)$, 
    \begin{equation*}
        \|\pi(g)V- \omega_{\pi}(d_g) V \|_{\W(\pi,\tilde{\psi})}<\delta,
    \end{equation*}
    \item and for $h \in \Omega$,
\begin{equation*}
\left|\We\left[\begin{pmatrix}h&\\&1\end{pmatrix}g\right]-
\omega_{\pi}(d_g)
\We\left[\begin{pmatrix}h&\\&1\end{pmatrix}\right]\right|<\delta.
\end{equation*}
    \end{enumerate}
\end{itemize}
Here $\omega_\pi$ and $d_g$ are as in Theorem \ref{existence-weak}.
The Whittaker function
$V$ may be taken independent
of $\delta$.
\end{theorem}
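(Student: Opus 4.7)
The plan is to realize $V$ explicitly in the Kirillov model. That model identifies $\W(\pi,\psi)$ with a space of functions on $\GL_{n-1}(\R)$ via restriction to the mirabolic, $V \mapsto \bigl(h \mapsto V\left[\begin{pmatrix} h & \\ & 1 \end{pmatrix}\right]\bigr)$, with a natural $L^2$ inner product. By the structure theory of the Kirillov model for generic $\theta$-tempered representations of $\GL_n(\R)$ with $\theta<1/2$, this realization contains every $\phi \in C_c^\infty(\GL_{n-1}(\R))$ as a smooth vector. I would choose, once and for all, a nonnegative bump function $\phi \in C_c^\infty(\GL_{n-1}(\R))$ supported in a slight enlargement of $\overline{\Omega}$, equal to a constant $c_0 \geq 2\iota$ on $\Omega$, and $L^2$-normalized; let $V$ correspond to $\phi$. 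Then $V$ has unit norm and satisfies $V\left[\diag(h,1)\right] \geq \iota$ on $\Omega$, establishing the normalization and lower-bound requirements.

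For the invariance claim (1), fix $g = \begin{pmatrix} a & b \\ c & d \end{pmatrix} \in K_0(C(\pi),\tau)$. Since $d$ is close to $1$, hence invertible, we may use the factorization
\[
g = \begin{pmatrix} 1_{n-1} & b/d \\ 0 & 1 \end{pmatrix}\cdot\begin{pmatrix} a-bc/d & 0 \\ 0 & d \end{pmatrix}\cdot\begin{pmatrix} 1_{n-1} & 0 \\ c/d & 1 \end{pmatrix} =: n_+ \cdot m \cdot n_-,
\]
and analyze each factor's action on the Kirillov function $\phi$. The middle factor equals $(d I_n)\cdot\diag((a-bc/d)/d, 1)$, so it acts as $\omega_\pi(d)$ times right translation of $\phi$ by an element close to $1_{n-1}$; uniform continuity of $\phi$ yields an $L^2$-perturbation $O_\phi(\tau)$. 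The upper unipotent $n_+$, commuted past $\diag(h,1)$, acts on $\phi(h)$ by multiplication by $\psi$ evaluated on the last coordinate of $hb/d$, which is $O(\tau)$ uniformly on the support of $\phi$; this contributes another $O_\phi(\tau)$.

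The main obstacle is the lower unipotent $n_-$. Commuting $n_-$ past $\diag(h,1)$ converts its right action on $V$ at $\diag(h,1)$ into evaluation of $V$ at $\begin{pmatrix} 1 & 0 \\ (c/d)h^{-1} & 1 \end{pmatrix}\diag(h,1)$, where the lower-left parameter has size $O(\tau/C(\pi))$ uniformly on the support of $\phi$. The heart of the matter is to show that translation in the $N^-$-direction by an element of this size perturbs our bump-function Kirillov vector by only $O(\tau)$ in $L^2$. This is the quantitative archimedean newvector principle itself: by the derived representation, the perturbation equals the integral of $\pi'(X_{N^-})V$ along the path, and although $X_{N^-}$ has magnitude $O(\tau/C(\pi))$, the operator $\pi'(X_{N^-})$ acts on fixed bump-function Whittaker vectors with norm growing exactly like $C(\pi)$, so the two factors cancel to give $O(\tau)$. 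The bound on $\pi'(X_{N^-})V$ is where $\theta$-temperedness enters: it is proved via the Mellin / Fourier-theoretic structure of the Kirillov model, relating the parameters $\mu_j$ of $\pi$ to the dual variable on $N^-$ and using $\theta<1/2$ to control the relevant $\Gamma$-factor ratios that arise. Summing the three contributions and choosing $\tau$ small in terms of $\delta$ yields (1).

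Finally, the pointwise estimate (2) follows from (1) by Sobolev embedding on the bounded open set $\Omega$: pointwise values of smooth Kirillov functions on $\Omega$ are controlled by $L^2$-norms of finitely many left-invariant mirabolic derivatives, and such derivatives commute both with right translation by $g$ and with the scalar $\omega_\pi(d)$. Applying the argument for (1) to each $\partial^\alpha V$ in place of $V$ (which is again a fixed smooth bump-function Whittaker vector, with Sobolev norms depending only on $\phi$) over a bounded range of multi-indices $\alpha$, and then invoking Sobolev, yields the pointwise bound. The vector $V$ depends only on $\phi$ and $\Omega$, hence is independent of $\delta$; only $\tau$ shrinks as $\delta$ does.
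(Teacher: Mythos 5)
Your construction of $V$ and your Iwahori-type decomposition $g = n_+\,m\,n_-$ are exactly the paper's starting point, and your treatment of the $n_+$ and $m$ factors is correct in outline. The genuine gap is in your treatment of $n_-$: you assert that $\|\pi'(X_{N^-})V\|_{\W(\pi,\psi)} \ll C(\pi)$ for $V$ a fixed bump function in the Kirillov model, attributing the proof to ``the Mellin/Fourier-theoretic structure of the Kirillov model'' and control of ``$\Gamma$-factor ratios.'' But this single sentence covers precisely the entire technical content of the theorem. The paper devotes three sections (\S\ref{sec:reduction}--\S\ref{sec:proof-heart}) to a cognate estimate: it invokes the Whittaker--Plancherel formula on $\GL_{n-1}(\R)$, the $\GL_n\times\GL_{n-1}$ local functional equation, a delicate decomposition of the spherical Whittaker function $W_\mu$ into ``$M$-Whittaker'' pieces (Lemma~\ref{partialdecomposition}), and a case analysis driven by the $\pop(s)$ property (Definition~\ref{POP}) to control the contour shifts. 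None of this is supplied or even sketched in your write-up; the most one can say is that you have reduced the theorem to a lemma of comparable depth. An assertion of the form ``the two factors cancel'' is not a proof of that lemma.

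There is also a structural difference worth flagging: you go after the $L^2$ estimate (claim~(1)) directly via a derivative bound and then deduce the pointwise estimate (claim~(2)) by Sobolev embedding. The paper does the opposite. It first proves a \emph{pointwise} perturbation bound (Proposition~\ref{mainprop}) for the bump-function vector $V_0$, and then obtains the $L^2$ estimate for a smoothed vector $V_1 = \Pi(\xi)V_0$ (with $\xi$ the normalized characteristic function of $K_1(C(\pi),\tau_1)$) essentially for free from the F{\o}lner property (Lemma~\ref{folner-lemma}) and the inequality $\|\Pi(F)\|_{\mathrm{op}} \leq \|F\|_{L^1}$; the pointwise estimate is then used only to transfer the lower bound and approximate invariance back to $V_1$. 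This avoids ever needing the derivative estimate $\|\pi'(X_{N^-})V\| \ll C(\pi)$ in $L^2$, which you both need and do not prove, and which (because the Kirillov action of $N^-$ is not a geometric translation) is not an elementary statement. Finally, two small corrections: the Kirillov datum is an element of $C_c^\infty(N_{n-1}\backslash\GL_{n-1}(\R),\psi)$, not of $C_c^\infty(\GL_{n-1}(\R))$ (these coincide only when $n=2$); and the $\theta$-temperedness hypothesis is not needed for the existence of bump-function Kirillov vectors, which holds for any generic irreducible representation---it is used only in the contour-shifting argument.
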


Informally, Theorem 
\ref{existence-strong}
asserts that
if $\tau$ is small enough,
then there are nonzero vectors in $\pi$
satisfying a form of approximate invariance
under $K_0(C(\pi),\tau)$,
both in the sense of norm
and as quantified by the Whittaker functional.

\begin{remark}
The proof is constructive
(see \S\ref{sec:sketch-pf-thm-2}),
and shows that we may take
$V$ to be a fixed bump function
in the Kirillov model.
\end{remark}

\begin{remark}
The assumption of $\theta$-temperedness 
(or even unitarity) may seem
artificial, since
it is not required in the non-archimedean
setting.
It is used
in the proof
to ensure that $\gamma(1/2-s,{\pi})$ remains holomorphic for $\Re(s)\ge 0$
during a contour shift argument (see \S\ref{sec:sketch-pf-thm-2}). 
This assumption is satisfied in
our intended applications.
\end{remark}




\begin{remark}
The notion of newvector
introduced by
Jacquet--Piatetski-Shapiro--Shalika
\cite{JPSS1}
differs
fundamentally from ours. Those authors
define a Whittaker function $\We$ on $\GL_{n}(F)$ to be
an \emph{essential vector}
if for any spherical representation $\pi'$ of $\GL_{n-1}(F)$,
with $W_0 \in \pi'$
the spherical vector
normalized by $W_0(1)=1$, the local zeta integral (see \S\ref{sec:local-funct-equat}) of
$\We$ and $W_0$ equals the $L$-factor of the Rankin-Selberg convolution $\pi\otimes \pi'$. 
One can also define newvectors at an archimedean place as
test vectors of the Rankin-Selberg zeta integral;
Popa \cite{P} has introduced such a theory for $\GL_2(\R)$,
while the case of $\GL_n(\R)$ is 
addressed by recent work of Humphries \cite{H2}. Such test vectors can be thought as \textit{algebraic} analogues at the archimedean place of the classical newvectors in \cite{JPSS1}.
The analytic newvectors considered here
may be regarded
as ``analytic test vectors''
(i.e., the zeta integral
enjoys a quantitative lower bound rather than merely
nonvanishing)
for ``analytically unramified'' representations
(i.e., those whose analytic conductor
is sufficiently small).
The source of this dichotomy
between algebraic and analytic
is related to the question:
what is the analogue inside $\GL_n(\mathbb{R})$
of $\GL_n(\Z_p) \subseteq \GL_n(\mathbb{Q}_p)$?
An algebraic analogue is $\O(n)$ (a maximal compact subgroup),
while an analytic analogue is a small balanced
neighborhood of the identity.
Algebraic newvectors transform nicely under $\O(n)$,
while analytic newvector transform nicely
under suitable neighborhoods of the identity.
\end{remark}

\subsection{Estimates for Bessel
and trace distributions}
Fix $\tau \in (0,1)$
and
$*\in\{0,1\}$,
so that the congruence
subset $K_*(X,\tau)$
is defined for
all $X \geq 1$.
\begin{defn}\label{defn:normalized-majorant}
We say that
a family $(F_X)_{X \geq 1}$
of smooth functions
$F_X$ on $\GL_{n}(\mathbb{R})$
is a \emph{normalized
majorant
for $(K_*(X,\tau))_{X \geq 1})$}
if the following conditions
hold.
\begin{enumerate}
    \item 
    $F_X$ is nonnegative,
    supported on $K_*(X,\tau)$,
    and constant
    on $K_*(X,\tau ')$
    for some $\tau' \in (0,\tau)$ (independent of $X$).
    \item
    $\int F_X \asymp 1$.
    \item
    Let $A$
    be $n-1$ or $n$
    according as $\ast$ is $0$ or $1$, so that
    $\vol(K_*(X,\tau)) \asymp X^{-A}$.
    Then
    $\partial_a^\alpha\partial_b^\beta\partial_c^\gamma\partial_d^\delta F_X\left[d\begin{pmatrix}a&b\\&1\end{pmatrix}\begin{pmatrix}I_{n}&\\c&1\end{pmatrix}\right]\ll X^{A+|\gamma|+*\delta}$ for any fixed multi-indices $\alpha,\beta,\gamma,\delta$.
\end{enumerate}
Here $\ast \delta$
is $\delta$ or $0$
according as $\ast$ is $1$ or $0$,
while the asymptotic notation
$U \ll V$ signifies
that $|U| \leq C |V|$
for some $C > 0$ not depending
upon $X$
and $U \asymp V$
means that $U \ll V \ll U$.
Since $\tau$ is fixed,
any implied constants
may depend upon $\tau$.
\end{defn}
Such families $(F_X)_{X \geq 1}$ exist.
For instance,
if $F_1$
is any non-negative smooth
function supported in a small enough
neighborhood
of the identity element
of $\GL_{n}(\mathbb{R})$
and constant in some smaller neighborhood,
then the family $(F_X)_{X \geq 1}$
defined by
$F_X\left[\begin{pmatrix}a&b\\c&d\end{pmatrix}\right]:=X^{n-1} F_1\left[\begin{pmatrix}a&b\\cX&d\end{pmatrix}\right]$ 
is a normalized majorant
for $(K_0(X,\tau))_{X \geq 1}$. 

To simplify
terminology,
we will often
abbreviate the above
definition to
``$F_X$ is a normalized
majorant
for $K_0(X,\tau)$,''
but it will always
be understood implicitly
that $F_X$ comes
in a family satisfying
the indicated estimates,
with the indicated uniformity
in $X$.

Theorems \ref{nonexistence}
and \ref{uniqueness}
will be derived from
estimates,
given below
in Theorem \ref{trace-estimates},
for the (relative) trace 
of the operator $\pi(F_X)$,
with $F_X$
a normalized majorant
of $K_1(X,\tau)$. 
To prepare
for stating those estimates,
we first recall the definition of the (relative) trace.

Let $f\in C_c^\infty(\GL_{n}(\R))$. We recall that $\pi(f)$ is trace class and that
\begin{equation}\label{trace}
    \Tr(\pi(f))=\sum_{v\in\B(\pi)}\langle \pi(f)v,v\rangle_\pi.
\end{equation}
Here $\B(\pi)$ is any orthonormal basis of $\pi$.

We  define the \emph{relative trace} (also known as \textit{Bessel distribution})
by
\begin{equation}\label{relative-trace}
    J_\pi(f):=\sum_{\We\in\B(\W(\pi,\tilde{\psi}))}\pi(f)\We(1)\overline{\We(1)}.
\end{equation}
The sums in \eqref{relative-trace} and \eqref{trace} 
converge absolutely and do not depend on a choice of an orthonormal basis $\B(\pi)$, see \cite[Appendix 4]{BM}.



\begin{theorem}\label{trace-estimates}
Fix $0 \leq \theta < 1/2$ and
$\tau>0$.
Let $(F_X)_{X \geq 1}$ be a
normalized majorant for $(K_1(X,\tau))_{X \geq 1}$.
Let $\pi$ be a
$\theta$-tempered
generic irreducible unitary 
representation
of $\GL_{n}(\R)$.
Then
$$\Tr(\pi(F_X)), J_\pi(F_X)\ll_{\tau,\sigma} (C(\pi)/X)^{-n\sigma}$$
for any $0<\sigma<1/2-\theta$.
\end{theorem}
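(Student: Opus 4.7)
The plan is to exploit the archimedean local Rankin-Selberg functional equation on the Kirillov model, followed by a Mellin-Barnes contour shift that extracts the gain $C(\pi)^{-n\sigma}$, with the corresponding loss $X^{n\sigma}$ absorbed by the derivative bounds in Definition \ref{defn:normalized-majorant}. I would establish the bound for $J_\pi(F_X)$ first, and then reduce the trace estimate to a Bessel-type estimate by factoring $F_X$ as a convolution $G_X * G_X^{*}$ (using that $F_X$ is nonnegative and constant on a slightly smaller $K_*(X,\tau')$) and applying Cauchy-Schwarz, so that $|\Tr(\pi(F_X))|$ is controlled by the norm $\|\pi(G_X)\|_{\mathrm{HS}}^2$ which in turn reduces to the Bessel setting.

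After unfolding the sum defining $J_\pi(F_X)$, one obtains
\[
J_\pi(F_X) = \int_{\GL_n(\R)} F_X(g) B_\pi(g)\, dg, \qquad B_\pi(g) := \sum_{\We \in \B(\W(\pi,\tilde{\psi}))} \We(g)\overline{\We(1)},
\]
interpreted distributionally. Using the Iwasawa/Bruhat structure of $K_1(X,\tau)$, I would factor $g = \bar n p$ with $\bar n$ lower unipotent and $p$ mirabolic; the bottom-row constraint $|c|<\tau/X$ confines $\bar n$ to an $O(\tau/X)$-neighborhood of the identity, while $p$ lies in an $O(\tau)$-neighborhood. Using the $N$-equivariance of $B_\pi$, the $\bar n$-integration collapses, leaving a pairing on $N\backslash P$ against a smoothed version of $F_X$ whose derivative bounds are inherited from Definition \ref{defn:normalized-majorant}(3).

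Next, I would take a Mellin transform along the $n$ diagonal torus coordinates of $p$. The derivative bounds translate into rapid decay of $\widehat{F}_X(\mathbf{s})$ in the imaginary directions, while the $X^{-1}$-scale localization in the bottom-row direction produces the scaling $\widehat{F}_X(\mathbf{s}) \ll X^{-n\Re(\mathbf{s})}$ uniformly for $\Re(\mathbf{s})$ in a fixed compact set. The archimedean local functional equation for $\pi$ twisted by characters $\chi_j$ of the torus rewrites the resulting Mellin-side pairing in terms of the gamma factors $\prod_{j=1}^{n} \gamma(1/2 + s_j, \pi \otimes \chi_j, \tilde{\psi})$. Because $\pi$ is $\theta$-tempered with $\sigma < 1/2 - \theta$, each such $\gamma$-factor is holomorphic throughout the strip $0 \le \Re(s_j) \le \sigma$, so the contour can be shifted from $\Re(\mathbf{s}) = 0$ to $\Re(\mathbf{s}) = \sigma$ without residues. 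Stirling then gives $|\gamma(1/2 + \sigma + it_j, \pi\otimes\chi_j)| \ll C(\pi)^\sigma (1+|t_j|)^{n\sigma}$ along the shifted contour, and the polynomial factor is absorbed by the rapid Mellin decay of $\widehat{F}_X$. Assembling all factors yields the claimed bound.

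The main obstacle is the bookkeeping in the third step: the multivariable functional equation on the Kirillov model (as developed inductively via Jacquet integrals and the embedding of $\W(\pi,\tilde{\psi})$ in the Whittaker models of the building blocks of its Langlands data) must be applied with uniformity in $C(\pi)$ and the auxiliary twisting characters $\chi_j$, and the simultaneous contour shift in $n$ complex variables needs to be justified by Schwartz-type decay of $\widehat{F}_X$. A secondary subtlety is tracking the dependence of implied constants on $\tau$ through the change of variables and the Iwasawa factorization; since $\tau$ is fixed throughout, this can be handled at the level of the first step without affecting the uniformity in $X$ and $C(\pi)$.
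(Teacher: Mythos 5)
Your proposal correctly identifies the core technology (Kirillov model, Rankin--Selberg local functional equation, Mellin/contour shift), but two of the central steps are either incorrect or underspecified, and as a result the argument does not close as written.

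First, the reduction from $\Tr(\pi(F_X))$ to the Bessel distribution is not achieved by writing $F_X = G_X * G_X^{*}$ and citing Cauchy--Schwarz and the Hilbert--Schmidt norm. If $F_X = G_X * G_X^{*}$, then $\Tr(\pi(F_X)) = \|\pi(G_X)\|_{\mathrm{HS}}^2$ is an exact identity with no inequality involved; but $\|\pi(G_X)\|_{\mathrm{HS}}^2$ is still a full trace, not a Bessel distribution, and the claimed reduction ``to the Bessel setting'' is not made explicit. What is actually true, and what the paper proves (equation \eqref{trace-limit-operation}), is that in the Kirillov model
$\Tr(\pi(f)) = \int_{N\backslash G} J_\pi\bigl(\Ad(\diag(h,1)^{-1})f\bigr)\,dh$,
i.e.\ the trace is the fundamental-domain integral of the Bessel distributions of conjugated test functions; this needs no self-convolution structure on $f$. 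You could recover that identity by unwinding your HS-norm expression through the Kirillov-model inner product, but the intermediate step is doing real work and cannot be waved away, and you would additionally need to reduce to the case where $F_X$ is a self-convolution of another normalized majorant, which is a separate lemma.

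Second, the claim that ``using the $N$-equivariance of $B_\pi$, the $\bar n$-integration collapses'' is false, and this is the decisive gap. In the Bruhat-type factorization $g = \bar n p$ the element $\bar n$ is a \emph{lower} unipotent (the opposite radical of the mirabolic). The Bessel distribution satisfies $j_\pi(n_1 g n_2) = \tilde{\psi}(n_1)\tilde{\psi}(n_2)\,j_\pi(g)$ only for \emph{upper} unipotents $n_1, n_2$, so equivariance does nothing to the $\bar n$-integral. Handling that integral is exactly why the local functional equation is needed: passing through the long Weyl element converts the lower-unipotent translate into an oscillatory phase of the form $e\!\left(\tfrac{C(\pi)}{X}\,c\, w' t^{-1} e_1\right)$ against the Whittaker--Plancherel expansion, and the gain $(C(\pi)/X)^{-n\sigma}$ is obtained by integrating by parts against this phase to localize $a_1 \gtrsim C(\pi)/X$ and then picking up the zeta-integral factor $|\det a|^{-\sigma}$. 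It does not arise, as you describe, from an estimate $\widehat{F}_X(\mathbf{s}) \ll X^{-n\Re(\mathbf{s})}$ paired with a $C(\pi)^{\sigma}$-type bound on the $\gamma$-factor; indeed that estimate has the wrong sign relative to your own earlier statement that the $X^{n\sigma}$ contribution is a loss. Until the $\bar n$-integral is correctly analyzed via the functional equation, the claimed bound is not established.
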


We pause
to describe the informal
content
of the above estimates.
Suppose that $F_X$ is a self-convolution,
so that the  operator $\pi(F_X)$ is positive-definite.
The trace 
$\Tr(\pi(F_X))$ of that operator is then an analytic proxy
for ``the dimension of the space
of 
vectors $\We \in \pi$
that are approximately
invariant by $K_1   (X,\tau)$''. In some applications,
the relative trace
$J_\pi(F_X)$
is the more relevant proxy
(see \S\ref{sec:proof-application}
for details).
Indeed, in the analogous
non-archimedean setting,
we could take for $F_X$
the normalized characteristic
function of $K_1(X)$.
Then
$\Tr(\pi(F_X))$
would be
the average of the character
of $\pi$ over the subgroup
$K_1(X)$.
That average
is
the dimension
of the space of $K_1(X)$-invariant vectors.


Theorem \ref{existence-weak}
implies that
$\Tr(\pi(F_X))\gg 1$
(similarly, Theorem \ref{existence-strong} implies that $J_\pi(F_X)\gg 1$) for $X \geq C(\pi)$
and $\tau > 0$ small but fixed.
Conversely,
smallness of
$\Tr(\pi(F_X))$
when $X$ is substantially smaller than $C(\pi)$, as described in Theorem \ref{trace-estimates}, supports the ``non-existence of the invariant vectors'' statement of Theorem \ref{nonexistence}.
In the transition regime $X \asymp C(\pi)$,
Theorem \ref{trace-estimates} states that $\Tr(\pi(F_X))$ is bounded, which supports the ``multiplicity one''
statement described in Theorem \ref{uniqueness}.

\begin{remark}
We expect that the the estimates in Theorem \ref{trace-estimates} can be improved to $\ll_N(C(\pi)/X)^{-N}$ for 
any fixed $N$. 
To obtain
such an improvement,
it seems
necessary to 
understand asymptotics of the relative character $j_\pi$ (defined in \S\ref{sphericality-unnecessary}) with uniformity
in $\pi$.
Such asymptotics
seem difficult to achieve when $\pi$ experiences
conductor-dropping,
i.e., when one of the Langlands parameters is extremely small.
\end{remark}


\subsection{Application}\label{sec:application}
As noted above,
we hope that 
the local technology
developed in this paper
may be usefully applied to global problems
involving averages
(of Fourier coefficients, $L$-values, ...)
over automorphic
forms on (say) $\PGL_{n}(\Z)\backslash\PGL_{n}(\R)$
ordered by analytic conductor.
We refer to a sequel of this paper \cite{J20} where the first named author provides a few such
applications: an analytic conductor variant of the orthogonality conjecture as in \cite{GSWH}, a Lindel\"of on average upper bound for the second moment of the central $L$-values, vertical Sato--Tate equidistribution of Satake parameters, density estimates for automorphic representations violating the Ramanujan conjecture at a finite place (Sarnak's density hypothesis), and a result on the distribution of low lying zeros of automorphic $L$-functions. 
We have attempted
to keep the present article short and focused on 
basic properties
of analytic newvectors,
but record
the following
simple application
for the sake of illustration.

\begin{theorem}\label{application}
For $X > 1$, we have
\begin{equation}\label{eqn:weak-weyl-cuspidal}
    \sum_{\pi : 
    C(\pi) \leq X }
    \frac{1}{L(1, \pi, \Ad)}
    \ll X^{n-1},
\end{equation}
where the sum is over
cuspidal automorphic representations
$\pi \subseteq L^2(\PGL_{n}(\Z)\backslash\PGL_{n}(\R))$
and $L(s,\pi,\Ad)$ denotes the adjoint $L$-function.
\end{theorem}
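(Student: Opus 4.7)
The plan is to apply a Kuznetsov-type relative trace formula on $\PGL_{n}(\mathbb{Z})\backslash\PGL_{n}(\mathbb{R})$ with a test function built from the analytic newvector technology developed above. Fix $\tau>0$ sufficiently small, let $F$ be a nonnegative normalized majorant for $K_0(X,\tau)$ in the sense of Definition \ref{defn:normalized-majorant}, and set $F_X:=F\ast F^{\ast}$ with $F^{\ast}(g):=\overline{F(g^{-1})}$. Then $\pi(F_X)=\pi(F)\pi(F)^{\ast}$ is positive semidefinite, so $J_\pi(F_X)\geq 0$ for every irreducible unitary $\pi$. Moreover, as noted in the discussion following Theorem \ref{trace-estimates}, Theorem \ref{existence-strong} (applied with $\Omega$ a fixed bounded open neighborhood of $1_{n-1}$ in $\GL_{n-1}(\mathbb{R})$) yields the uniform lower bound $J_\pi(F_X)\gg 1$ for every cuspidal $\pi$ with $C(\pi)\leq X$.

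The spectral side of the Kuznetsov formula, obtained by pairing $\tilde{\psi}$ against the kernel $K_{F_X}(x,y):=\sum_{\gamma\in\PGL_n(\mathbb{Z})}F_X(x^{-1}\gamma y)$ in both variables and evaluating at the identity, is expanded via Rankin--Selberg unfolding of the Petersson norm $\|\phi\|_{\operatorname{Pet}}^{2}=c_{n,\pi}\,L(1,\pi,\Ad)\,\|W_\phi\|_{\mathrm{Kir}}^{2}$, giving
\[
\sum_{\pi\text{ cuspidal}}\frac{J_\pi(F_X)}{c_{n,\pi}\,L(1,\pi,\Ad)}\;+\;(\text{Eisenstein contribution}).
\]
The proportionality factor $c_{n,\pi}>0$ absorbs the archimedean Rankin--Selberg local factor $L_\infty(1,\pi,\Ad)=\prod_{i\neq j}\Gamma_{\mathbb{R}}(1+\mu_i-\mu_j)$, which is bounded above by an absolute constant under the $\theta$-temperedness hypothesis since $|\Gamma_{\mathbb{R}}(1+z)|\ll 1$ for $|\Re(z)|\leq 2\theta<1$. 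The Eisenstein contribution is itself a nonnegative combination of Bessel distributions $J_{\pi_t}(F\ast F^{\ast})$ over the continuous spectrum and may be discarded. Restricting the cuspidal sum to $C(\pi)\leq X$ and applying $J_\pi(F_X)\gg 1$ gives
\[
\sum_{\pi:C(\pi)\leq X}\frac{1}{L(1,\pi,\Ad)}\;\ll\;\text{(geometric side)}.
\]

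On the geometric side, the relative trace expands as a sum of generalized Kloosterman integrals indexed by the double cosets $N(\mathbb{Z})\backslash\PGL_n(\mathbb{Z})/N(\mathbb{Z})$. Since $F_X$ is supported in $K_0(X,\tau)\cdot K_0(X,\tau)^{-1}\subseteq K_0(X,C\tau)$ for some absolute $C$ --- a neighborhood of $1_n$ with bottom row pinched to scale $\tau/X$ --- only the identity double coset contributes for $X$ large: Bruhat cells $NwN$ with $w\neq 1$ force the lower-triangular entries of $n_1^{-1}\gamma n_2$ to remain bounded away from $0$, incompatible with $\operatorname{supp}(F_X)$, and the finitely many nontrivial elements of $B(\mathbb{Z})\cap\PGL_n(\mathbb{Z})$ are excluded by the constraints $|a-1_{n-1}|,|d-1|<\tau<1$ in the definition of $K_0(X,\tau)$. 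The surviving identity contribution collapses to
\[
\int_{N(\mathbb{R})}F_X(u)\,\overline{\tilde{\psi}(u)}\,du\;\ll\;\|F_X\|_\infty\cdot\vol\bigl(N(\mathbb{R})\cap\operatorname{supp}(F_X)\bigr)\;\ll\;X^{n-1},
\]
using $\|F_X\|_\infty\leq\|F\|_1\|F\|_\infty\asymp X^{n-1}$ from Definition \ref{defn:normalized-majorant}(3), together with the $X$-uniform bound $\vol(N(\mathbb{R})\cap K_0(X,C\tau))\asymp\tau^{n(n-1)/2}$ (the bottom row of $N$ vanishes, so the $\tau/X$ pinch imposes no constraint on $N$).

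The main technical obstacle is the careful Bruhat-cell bookkeeping needed to discard all non-identity double cosets on the geometric side, which depends essentially on the precise coordinatewise structure of $K_0(X,\tau)$: the $X$-decaying window $|c|<\tau/X$ on the bottom row is what pushes nontrivial Weyl cells out of $\operatorname{supp}(F_X)$ once $X$ is large, while the fixed window $\tau<1$ on the diagonal excludes nontrivial integral torus elements. A secondary point is the uniform upper bound on the archimedean conversion factor $c_{n,\pi}$, which is where the $\theta$-temperedness of cuspidal $\pi$ (via \cite{MS}) is used. Once these two points are handled, comparison of the spectral and geometric sides gives the claimed bound.
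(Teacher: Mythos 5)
Your proposal follows essentially the same route as the paper: form the self-convolution $F_X=F\ast F^{\ast}$ so that $J_\pi(F_X)\geq 0$ on all of the automorphic spectrum, invoke Theorem~\ref{existence-strong} for the uniform lower bound $J_\pi(F_X)\gg 1$ when $C(\pi)\leq X$, pair the automorphic kernel against the Whittaker character in both variables (which also removes the non-generic spectrum), and observe that the support of $F_X$ forces the geometric side down to the identity Bruhat cell, leaving the unipotent integral $\int_N F_X(x)\,\overline{\psi(x)}\,dx\ll X^{n-1}$. The one place you are more careful than the paper is the Petersson-versus-Kirillov normalization: you correctly isolate the archimedean factor $L_\infty(1,\pi,\Ad)$ and note that $\theta$-temperedness gives only the one-sided bound $L_\infty(1,\pi,\Ad)\ll 1$ (which decays for large spectral parameter, so the paper's claim of an absolute $\asymp$ between $\ell(\pi)$ and $L(1,\pi,\Ad)$ is overstated), and this upper bound is exactly the direction the argument requires.
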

The proof of
Theorem \ref{application}
gives an asymptotic formula for a
smoothly truncated and weighted
variant of
the LHS of
\eqref{eqn:weak-weyl-cuspidal},
together with the corresponding
contribution from the
continuous spectrum.
The analogous estimate
for forms on $\GL_{n}(\mathbb{Z}) \backslash \GL_{n}(\mathbb{R})$ having a given central character
may be proved in the same way.
Such estimates are standard when $n=2$
\cite[\S14.10, \S16.5]{IK}. Some variants 
have been established for $n=3$ in \cite{B1, GK2}, for $n=4$ in \cite{GSWH}. 
We note that
Brumley \cite[Theorem 3]{Br} established lower
bounds for $L(1, \pi, \Ad)$
much sharper than those
that follow from 
\eqref{eqn:weak-weyl-cuspidal},
and Brumley--Mili\'cevi\'c \cite{BrM} have recently
obtained a Weyl law 
on adelic quotients,
such as $\PGL_{n}(\mathbb{Q})
\backslash \PGL_{n}(\mathbb{A})$,
for cuspidal automorphic representations
ordered by the 
product of the
analytic conductors
at each place.

While the estimate \eqref{eqn:weak-weyl-cuspidal}
appears to be new,
it should not be regarded
as analytically difficult.
Indeed, by applying
the Kuznetsov formula
to a test function
that projects
onto representations
having parameters
in a ball of essentially bounded size,
it should be possible 
to
establish the analogue of
\eqref{eqn:weak-weyl-cuspidal}
over substantially smaller families.
Our point is to record how a soft proof
follows naturally from Theorem \ref{existence-strong}
as illustration of a technique
that we hope will be useful
also in more analytically challenging
problems.
We are motivated in particular
by the
problem of studying
higher moments of $L$-functions
over such families.

\subsection{Organization of the paper}
In \S\ref{sec:sketch-pf-thm-2} we give a sketch of the proof of Theorem \ref{existence-strong} which is the most technical theorem in this paper. We recommend the reader to go through it first to understand various technicalities in the actual proof. In \S\ref{sec:auxiliary-lemmas} we record most of the definitions and prove auxiliary lemmas which we need at the various stages of the proofs. We recommend that readers to skim over this section for first reading and come back when some Lemma is recalled. In \S\ref{sec:reduction} and \S\ref{sec:proof-mainprop} we reduce the proof of Theorem \ref{existence-strong} to Proposition \ref{mainprop} and Proposition \ref{mainprop} to Proposition \ref{heart}, respectively. Finally, in \S\ref{sec:proof-heart} we prove Proposition \ref{heart}, which is the most technical part of the paper. In \S\ref{sec:proof-multiplicity-one} We prove Theorem \ref{trace-estimates}, Theorem \ref{nonexistence}, and Theorem \ref{uniqueness}. Lastly, in \S\ref{sec:proof-application} we prove Theorem \ref{application}.

\begin{ackno}
We would like to thank Jack Buttcane for 
encouragement and several helpful discussions regarding
the decomposition of the spherical Whittaker function in \S\ref{sec:proof-heart}, and Peter Humphries for careful reading of an earlier draft and making some useful comments. We also thank Djordje Mili\'cevi\'c, Farrell Brumley, and Valentin Blomer for feedback on various aspects of this paper.
\end{ackno}

\section{Sketch for the proof of Theorem \ref{existence-strong}}
\label{sec:sketch-pf-thm-2}
For notational ease, in the rest of the paper, We will work on $\GL_{n+1}(\R)$ (or $\PGL_{n+1}(\R)$) and $\Pi$ will denote an irreducible generic unitary $\theta$-tempered representation of $\GL_{n+1}(\R)$ (or $\PGL_{n+1}(\R)$). Also, in the definition of $K_*(X,\tau)$ in \eqref{defn-congruence-subgroup} the matrix will be written in $n$ plus $1$ block decomposition form.

In this section we assume that $\Pi$ is tempered (instead of $\theta$-tempered) and has trivial central character.
We construct the vector $\We \in \Pi$ by specifying that it be
given by a fixed bump function in the Kirillov model (see
\eqref{newvec}).
The key step in the proof of Theorem
\ref{existence-strong}
is to verify that $\We(g)$
approximates $\We(1)$ for all $g \in K_0(C(\Pi),\tau)$ with $\tau$ small;
the remaining assertions
are deduced from this one
fairly easily.
The main difficulties in the proof are present in the special
case that $g$ is lower-triangular unipotent, so for the purposes
of this sketch we restrict to that case.
It will suffice then to prove the following quantitative
refinement
of the conclusion of Theorem \ref{existence-strong}:
for all small $1 \times n$ row vectors $c$,
\begin{equation}\label{eq:sketch-initial-reduction-quantitative}
  \We \left[ \begin{pmatrix}
      1 &  \\
      c/C(\Pi) & 1
    \end{pmatrix} \right]
  - \We(1)
  \ll
  |c|
\end{equation}
To that end, we first expand the LHS of
\eqref{eq:sketch-initial-reduction-quantitative} using the
Whittaker--Plancherel formula \eqref{genwhitplan}.  We then apply
the 
$\GL(n+1)\times\GL(n)$ local functional equation
\eqref{primelfe} and attempt to analyze the resulting integral.
We will indicate how this goes first
in the simplest case $n = 1$, and then
describe the modifications necessary for general $n$, along with technical difficulties in those cases.

\subsection{Proof for \texorpdfstring{$n=1$}{}}
In this case $\Pi$ is a representation of $\GL_2(\mathbb{R})$.
We define the Whittaker model $\mathcal{W}(\Pi)$ using the
additive character of the unipotent radical in
$\GL_2(\mathbb{R})$ defined as in \eqref{add-char-of-n}.  We
recall that, by the theory of the Kirillov model, there is for
each $f \in C_c^\infty(\mathbb{R}^\times)$ a unique element
$\We \in \mathcal{W}(\pi)$ of the Whittaker model of $\Pi$ for
which
\begin{equation}\label{eq:kirillov-model-sketch-n-1}
    \We \left[ \begin{pmatrix}
      y &  \\
      & 1
    \end{pmatrix} \right]
  = f(y)
\end{equation}
for all $y \in \mathbb{R}^\times$.
We recall the local functional equation
\eqref{lfe}
for $\GL_2 \times \GL_1$:
for $s \in \mathbb{C}$,
the zeta integral
\[
  \int_{\mathbb{R}^\times}
  \We \left[ \begin{pmatrix}
      t &  \\
      & 1
    \end{pmatrix} \right] |t|^s  \, d^\times t
\]
converges absolutely for $\Re(s) > -1/2$
and
extends to a meromorphic function on the complex plane,
where it satisfies the relation
\[
  \int_{\mathbb{R}^\times}
  \We\left[ \begin{pmatrix}
      t &  \\
      & 1
    \end{pmatrix} \right] |t|^s \, d^\times t
  =
  \frac{1}{\gamma(\Pi, 1/2 + s)}
  \int_{\mathbb{R}^\times}
  \We\left[ \begin{pmatrix}
      1 &  \\
      & t
    \end{pmatrix} w \right] |t|^{s} \, d^\times t.
\]
Here $w := \begin{pmatrix}
  & 1 \\
  -1 & 
\end{pmatrix}$
denotes the Weyl element
and $\gamma$ the local $\gamma$-factor,
whose properties we recall in greater detail in
\S\ref{sec:local-funct-equat}.
The meromorphic function
\[
  \Theta(s,\Pi)
  :=
  \frac{
    C(\Pi)^{-s}
  }
  {
    \gamma(1/2+s,\Pi)
  }
\]
is holomorphic for $\Re(s) > -1/2$
and non-vanishing for $\Re(s) < 1/2$.
The normalization is such that $\Theta(s,\Pi)$
is approximately of size $1$
for bounded $s$, uniformly in $\Pi$;
more precisely,
we have
\begin{equation}\label{eq:}
  (1+|\Im(s)|)^{-2|\Re(s)|}
  \ll
  \Theta(s,\Pi)
  \ll
  (1+|\Im(s)|)^{2|\Re(s)|}
\end{equation}
for $s$ of bounded real part and a fixed
positive distance away from the poles of $\Theta(s,\Pi)$,
and with the
implied constant  uniform in $\Pi$
(see Lemma \ref{boundgammafactor}).

Let us assume in this sketch that $\Pi$ belongs to the discrete
series, so that in the Kirillov model of $\Pi$, the subspace of
functions that vanish off the group $\mathbb{R}^\times_+$ of
positive reals is
invariant by the group of positive-determinant
elements of $\GL_2(\mathbb{R})$; this allows us
to simplify the exposition slightly because the character group
of $\mathbb{R}^\times_+$ is a bit simpler than that of
$\mathbb{R}^\times$.

We fix a test function $f\in C^\infty_c(\R^\times_+)$ satisfying
the normalization $\|f\|_2 = 1$.  We extend $f$ by zero to an
element of $C_c^\infty(\mathbb{R}^\times)$.
We construct $\We$ using the theory of the Kirillov model
by requiring that \eqref{eq:kirillov-model-sketch-n-1} hold
for this choice of $f$.
We aim then to verify the estimate
\eqref{eq:sketch-initial-reduction-quantitative}.
To achieve this, we first apply Mellin inversion,
giving
for any $c \in \mathbb{R}$ the identity
\[
  \We \left[ \begin{pmatrix}
      1 &  \\
      c & 1
    \end{pmatrix} \right]
  =
  \int_{(0)}
  \left(
    \int_{t \in \mathbb{R}^\times_+}
    \We \left[ \begin{pmatrix}
        t &  \\
        & 1
      \end{pmatrix} \begin{pmatrix}
        1 &  \\
        c & 1
      \end{pmatrix} \right]
    |t|^s \, d^\times t
  \right)
  \, \frac{d s}{2 \pi i }.
\]
We then apply the local functional
equation to the inner integral;
after some matrix multiplication
and appeal to the left-$N$-equivariance
of $\We$,
we obtain
\[
  \We \left[ \begin{pmatrix}
      1 &  \\
      c & 1
    \end{pmatrix} \right]
  =
  \int_{(0)}
  \frac{1}{\gamma(\Pi,1/2+s)}
  \left(
    \int_{t \in \mathbb{R}^\times_+}
    e(-c/t) \We \left[ \begin{pmatrix}
        1 &  \\
        & t
      \end{pmatrix}w  \right]
    |t|^s \, d^\times t
  \right)
  \, \frac{d s}{2 \pi i }.
\]
We then substitute $c \mapsto c/C(\Pi)$,
apply the change of variables
$t \mapsto t/C(\Pi)$,
and subtract the corresponding identity
for $c=0$,
giving
\[
  \We \left[ \begin{pmatrix}
      1 &  \\
      \frac{c}{C(\Pi)} & 1
    \end{pmatrix} \right]
  -
  \We(1)
  =
  \int_{(0)}
  \Theta(s,\Pi)
  \left(
    \int_{t \in \mathbb{R}^\times_+}
    (e(-c/t) - 1)
    \We\left[ \begin{pmatrix}
        C(\Pi) &  \\
        & t
      \end{pmatrix} w \right]
    |t|^s \, d^\times t
  \right)
  \, \frac{d s}{2 \pi i }.
\]

We claim now that if $t$ is small, then $\We\left[ \begin{pmatrix}
    C(\Pi) &  \\
    & t
  \end{pmatrix} w \right]$ is
negligible;
more precisely,
we claim that
for any fixed integers $M,N \geq 0$,
\begin{equation}\label{eq:sketch-n-1-key-claim}
  (t \partial_t)^N
  \We \left[ \begin{pmatrix}
      C(\Pi) &  \\
      & t
    \end{pmatrix}w \right]
  \ll \min(1,t^M).
\end{equation}
From the claim it follows that $e(-c/t) \approx 1$ on the
``essential support'' of the inner integral, leading eventually
to the required estimate
\eqref{eq:sketch-initial-reduction-quantitative}.

We focus on the case
$N = 0$ of the claim,
and suppose that $t$ is small;
we must show then that
\begin{equation}\label{eq:sketch-n-1-key-claim-specialized}
  \We\left[ \begin{pmatrix}
      C(\Pi) &  \\
      & t
    \end{pmatrix} w \right]
  \ll t^M
\end{equation}
for any fixed $M$.
To that end, as before
we once again apply a Mellin inversion and appeal to the local functional
equation,
which gives
\begin{equation}\label{eq:sketch-n-equals-1-second-LFE}
  \We \left[ \begin{pmatrix}
      C(\Pi) &  \\
       & t
    \end{pmatrix}w \right]
  =
  \int_{(0)}
  t^s
  \Theta(s,\Pi)
  \tilde{f}(s)\, \frac{d s}{2 \pi i },
\end{equation}
with
$\tilde{f}(s) := \int_{t \in \mathbb{R}^\times_+} f(t) t^{-s} \,
d^\times t$.  By the construction of $f$, the its Mellin
transform $\tilde{f}$ is entire and of rapid decay in vertical
strips.  The crux of the argument is to now shift the
integration in \eqref{eq:sketch-n-equals-1-second-LFE} to the
line $\Re(s) = M$ for some fixed, large and positive $M$; the
properties of $\Theta$ summarized above imply that
\begin{equation}\label{eq:sketch-n-1-estimate-Theta-tilde-f}
  \Theta(s,\Pi) \tilde{f}(s) \ll |s|^{-2}
\end{equation}
(say) for such $s$,
leading to the required estimate
\eqref{eq:sketch-n-1-key-claim-specialized}.

In summary, the proof in the case $n=1$
follows readily from two applications
of the local functional equation
and a straightforward Paley--Wiener type analysis
of the Mellin integral representation
\eqref{eq:sketch-n-equals-1-second-LFE}.

\subsection{Difficulties in generalizing to
  \texorpdfstring{$n\ge 2$}{}}\label{sketch-n-ge-2}
We choose $\We$ in a similar
manner to the $n=1$ case; that is $\We\left[\begin{pmatrix}g&\\&1\end{pmatrix}\right]$ is given by a unipotent equivariant bump function on $\GL_n(\R)$ (see \eqref{newvec} for details)
and aim to show as before that
for small $1 \times n$ row vectors $c$,
\[
  \We
  \left[
    \begin{pmatrix}
      1 &  \\
      c/C(\Pi) & 1
    \end{pmatrix}
  \right]
  -
  \We(1)
  \ll
  |c|.
\]
The matrix entries here are written in the evident block form.
We first appeal to the local functional equation
much like in  $n=1$ case,
reducing in this way to proving
estimates slightly
more general
than the following
generalization of \eqref{eq:sketch-n-1-key-claim-specialized}
(see Proposition \ref{heart} for
details):
if
$a = \diag(a_1,\dotsc,a_n)$ is a diagonal matrix
with positive entries
and $a_1$ small,
then
\begin{equation}\label{eq:sketch-n-geq-2-key-estimate}
  \We \left[
    \begin{pmatrix}
      C(\Pi) &  \\
      & a
    \end{pmatrix}
    w
  \right]
  \ll_N
  \delta^{1/2}(a)
  a_1^N.
\end{equation}
Here $\delta(a) := \prod_{j<k} |a_j/a_k|$ is the modular
character of the upper-triangular Borel in $\GL_n$.
There are some similarities
between the present task
and what is accomplished
in the non-archimedean analogue by \cite[\S5 Lemme]{JPSS1}.

For the proof of \eqref{eq:sketch-n-geq-2-key-estimate},
we begin as in the $n=1$ case by expanding
the function $\GL_n(\mathbb{R})
\ni h \mapsto \We \left[ \begin{pmatrix}
    C(\Pi) &  \\
    & h
  \end{pmatrix} w \right]$ using the Whittaker--Plancherel
formula for $\GL_n(\mathbb{R})$
and applying the local functional
equation for $\GL_{n+1} \times\GL_n$ (see
\eqref{finalreducespherical}).
We arrive in this way
at the following generalization of
\eqref{eq:sketch-n-equals-1-second-LFE}:
\begin{equation}\label{sketch-main-integral}
  \We \left[
    \begin{pmatrix}
      C(\Pi) &  \\
      & a
    \end{pmatrix}
    w
  \right]
  =
  \int_{(0)^n}
  W_\mu(a)
  \Theta(\mu,\Pi)
  \langle f, W_\mu\rangle\frac{d\mu}{|c(\mu)|^2}.
\end{equation}
Here
\begin{itemize}
\item  $\int_{(0)^n}$
  denotes an integral over
  $\mu \in \mathbb{C}^n$
  with  $\Re(\mu_1) = \dotsb = \Re(\mu_n) = 0$. Here (and elsewhere in this paper) $d\mu$ denotes $\prod_i d\mu_i$ where $d\mu_i$ is the Lebesgue measure on the vertical line $\Re(s)=0$ normalized by $2\pi i$.
\item 
  $W_\mu$ is the spherical Whittaker function
  normalized so that $W_\mu(1) \asymp 1$
  (see \ref{L2whittaker} for details).
\item $\Theta(\mu,\Pi) := C(\Pi)^{-\mu_1 - \dotsb - \mu_n}
  \gamma(\Pi \otimes \tilde{\pi}_\mu,1/2)$
  is holomorphic for $\Re(\mu_i)> -1/2$
  and has properties analogous to those of $\Theta(s,\mu)$
  mentioned above
  in the $n=1$ case, and
\item $c(\mu)$ is a product of $\Gamma$-functions,
  related to the Plancherel density. 
\item $\langle f, W_\mu\rangle:=\int_{N\backslash \GL_n(\R)}f(g)\overline{W_\mu(g)}dg$, where $N$ is the unipotent subgroup of upper triangular matrices in $\GL_n(\R)$. 
 
\end{itemize}
The product
$\Theta(\mu,\Pi) \langle f, W_\mu \rangle$
enjoys strong estimates
analogous
to \eqref{eq:sketch-n-1-estimate-Theta-tilde-f}:
it extends to a meromorphic function in $\mu$,
holomorphic in $\Re(\mu_i) > -1/2$
and of rapid decay in vertical strips, uniformly in $\Pi$.

Similar to the standard proof of any Paley-Wiener type statement, we use the rapid decay of $\langle f, W_\mu\rangle$ in $\mu$ to ensure the convergence of the integral \eqref{sketch-main-integral}, and our source of the decay in the $a_1$ direction (i.e. as $a_1\to 0$, as required) is contour shifts.
However, the Paley-Wiener type argument as simple as in $n=1$ case
can not be directly applied in this case. In fact, it turns out that
any shift of the $\mu$ contour
that avoids polar hyperplanes of $\Theta(\mu,\Pi)$
is insufficient to achieve
the required bound $\ll\min(1,a_1^N)$.

The reason for this obstruction
is that $W_\mu$ has asymptotic expansion of the form
$$\delta^{1/2}(a)\sum_{w\in S_n}a^{w\mu}M_{w\mu}(a),\quad a^\mu=\prod_{i=1}^na_i^{\mu_i}.$$
Here $S_n$ is the Weyl group of $\GL(n)$ and $M(\mu,a)$, which we informally call as $M$-Whittaker function, is an infinite series of the form
$$\sum_{k\in Z^{n-1}_{\ge 0}}c_{\mu}(k)\prod_{i=1}^{n-1}(a_i/a_{i+1})^{2k_i}.$$
For example, on $\GL(2)$ the spherical Whittaker function $W_\mu(a)$ is given by the $K$-Bessel function as
\begin{align*}
&(a_1/a_2)^{1/2}(a_1a_2)^{(\mu_1+\mu_2)/2}K_{(\mu_1-\mu_2)/2}(2\pi a_1/a_2)\\&=a_1^{\mu_1+1/2}a_2^{\mu_2-1/2}\sum_{k=0}^\infty c_\mu(k)(a_1/a_2)^{2k}+a_1^{\mu_2+1/2}a_2^{\mu_1-1/2}\sum_{k=0}^\infty d_\mu(k)(a_1/a_2)^{2k}.
\end{align*}
for some complex coefficients $c_\mu(k)$ and $d_\mu(k)$. The infinite sums are essentially $I$-Bessel functions, and are exponentially increasing in $a_1/a_2$. If we shift the contour of $\mu_1$ to right side e.g. to $\Re(\mu_1)=N$ for some large positive $N$ we do not cross any pole of $\Theta(\mu,\Pi)$. If $a_2$ is very large compared to $a_1$ (e.g. $a_2=a_1^{-1}$ and $a_1\to 0$) this contour shift does not yield the required bound $\ll a_1^N$. That is why we first need to decompose the Whittaker functions into finitely many $M$-Whittaker functions, (see Lemma \ref{partialdecomposition}), and for each summand $M$-Whittaker function we shift contour to the relevant direction and get the required bound. For instance, in this case in the first summand we shift $\mu_1$ and in the second we shift $\mu_2$ to the right side.

However, there is still one technical issue. Each $M$-Whittaker function, like
the $I$-Bessel functions, is exponentially increasing in the positive roots (e.g. in $\GL(2)$ exponentially increasing in $a_1/a_2$). That is why we can effectively apply the technique of contour shifting only when the positive roots are bounded. For example, on $\GL(2)$ we decompose $W_\mu(\diag(a_1,a_2))$ 
into relevant $M$-Whittaker function and shift contour
only when $a_1<a_2$, as in the discussion in the previous paragraph. On the other hand, in $\GL(2)$ the Whittaker function $W_\mu(a)$ decays rapidly as $a_2/a_1\to 0$. Such decay is enough to treat the complimentary case $a_1>a_2$.

It is tempting to imagine that we should decompose only when $a$ is in the positive Weyl chamber. If $a$ does not lie in a positive Weyl chamber then at least one root is large, and we may expect that the rapid decay of the Whittaker function will save the day. Although this works when $n=2$ this fails for general $n$; for e.g., there are diagonal elements in $\GL(n)$ which barely fail to be in a positive Weyl chamber. For example, in $\GL(3)$ the element $Y:=\diag(y,-y/\log y, e^{1/y})$ as $y\to 0$ logarithmically fails to be in the positive chamber. For this element the rapid decay estimate of the Whittaker function yields only a logarithm decay
$$W(Y)\ll |\log y|^{-N},$$
not a polynomial decay $\ll y^N$, so does not meet our requirement.

To deal with this issue we need to treat the elements like $Y$ as if they are in the positive chamber. To do that we divide the set of diagonal matrices into two classes whether they satisfy a property $\pop$ or not (see Definition \ref{POP}). The $\pop$ refers to whether the tuple $(a_1,\dots, a_n)$ of a diagonal element $a=\diag(a_1,\dots, a_n)$ has a partial ordering of the form \emph{all of $a_1,\dots, a_s$ are smaller than all of $a_{s+1},\dots,a_n$}. For instance, the element $Y$ above has $\pop$ property for $s=2$, as $y\to 0$.

In Lemma \ref{notpop} we showed for the elements $a$ which do not satisfy $\pop$ the rapid decay of the Whittaker function implies the required bound. 
Rest of the section \S\ref{sec:proof-heart} is devoted to the case when $a$ satisfies $\pop(s)$ for some $s$. In this case we decompose the Whittaker function $W$ into the $M$-Whittaker functions. However, we can not do a \emph{full} decomposition as we described above in the $\GL(2)$ case,
because $a$ may not lie in the positive Weyl chamber and $M$ might exponentially blow up. To make sure we have control on the exponential increment we only \emph{partially} decompose, so that the $M$-Whittaker functions have exponential increment only in the roots of the form $a_i/a_j$ with $1\le i\le s$ and $s+1\le j\le n$; hence $M$ does not blow up as $a$ satisfies $\pop(s)$.
Loosely speaking, a partial decomposition is 
corresponding to the Levi in $\GL(n)$ attached to the partition of $n$
of the form
$n = s + 1 + \dotsb + 1$, and a full decomposition is the same with $s=1$.

Such full decompositions of the spherical Whittaker function have appeared
in the literature, in particular in the works of Goodman--Wallach \cite{GW} and Hashizume \cite{H}. Such results are also implicit in unpublished work of Casselman--Zuckerman and independent work of Kostant \cite{Ko}. In fact, \cite{H} also provides explicit asymptotic expansions of such rapidly increasing $M$-Whittaker functions. However, both \cite{GW,H} use techniques from differential equation, whereas we use spectral techniques to produce such decompositions (also see \cite{Bu}).

\subsection{Sketch of the proof for \texorpdfstring{$\mathrm{PGL}_{n+1}(F)$, where $F$ is non-archimedean}{}} 

We re-establish the invariance result \cite[\S5 Th\'eor\'eme]{JPSS1} along the exact same lines we prove in the archimedean case. In \cite{JPSS1} it is, instead, proved using the test vector property of the newvector. However, the proof in the $p$-adic case is, still, simpler due to existence of simpler explicit algebraic formulas. Also the corresponding $M$-Whittaker functions do not exponentially increase in the non-archimedean case.

We will only concentrate on proving the analogue of the crucial
part, Proposition \ref{heart}. Let $F$ be a non-archimedean
local field with ring of integers $\mathfrak{o}$ and uniformizer
$\varpi$. Let $\Pi$ be
a generic irreducible tempered unitary representation of $\GL_{n+1}(F)$. Let $\We$ be the vector in $\W(\Pi,\tilde{\psi})$ given in the Kirillov model by 
\begin{equation}
    \We\left[\begin{pmatrix}nak&\\&1\end{pmatrix}\right]=\psi(n)\mathrm{char}_{\mathfrak{o}^\times}(a),
\end{equation}
where $n,a:=\diag(a_1,\dots,a_n)$ are the unipotent and diagonal elements, respectively, and $k\in \GL_n(\mathfrak{o})$. 
\begin{prop}\label{heart-p-adic}
Let $w$ be the long Weyl element in $\GL(n+1)$. Then
$$\We\left[\begin{pmatrix}C(\Pi)&\\&a\end{pmatrix}w\right]\neq 0\implies |a_1|\gg 1.$$
\end{prop}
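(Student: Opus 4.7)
The plan is to replay the archimedean strategy used in Proposition \ref{heart}, exploiting the much simpler algebraic structure of the non-archimedean setting. One first applies the spherical Whittaker--Plancherel formula for $\GL_n(F)$ together with the $\GL_{n+1}\times\GL_n$ local functional equation to arrive at the non-archimedean analogue of \eqref{sketch-main-integral},
$$\We\left[\begin{pmatrix}C(\Pi)&\\&a\end{pmatrix}w\right]=\int_{(0)^n}W_\mu(a)\,\Theta(\mu,\Pi)\,\langle f,W_\mu\rangle\,\frac{d\mu}{|c(\mu)|^2},$$
where $\mu$ now ranges over the compact torus $(i\R/\tfrac{2\pi i}{\log q}\Z)^n$ parametrising the tempered spherical dual of $\GL_n(F)$, and
$$\Theta(\mu,\Pi):=\frac{C(\Pi)^{-(\mu_1+\dots+\mu_n)}}{\gamma(1/2,\Pi\otimes\tilde\pi_\mu)}$$
is holomorphic and uniformly bounded on $\{\Re\mu_i\geq 0\}$ by the definition of the local conductor together with the temperedness of $\Pi$. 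The explicit choice of $f$ makes $\langle f,W_\mu\rangle$ an explicit bounded rational function of the variables $q^{\mu_i}$.

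Next, invoke the Casselman--Shalika formula
$$W_\mu(a)=\delta^{1/2}(a)\sum_{\sigma\in S_n}c(\sigma\mu)\,a^{\sigma\mu}$$
on the chamber where $W_\mu$ is non-vanishing (elsewhere $W_\mu(a)=0$ and the conclusion is immediate). Crucially this is a \emph{finite} Weyl sum with no infinite $M$-Whittaker tail, so the $\pop$-dichotomy and the partial decomposition machinery of \S\ref{sketch-n-ge-2} play no role here. For each $\sigma\in S_n$ one then shifts the contour in the variable $\mu_{\sigma^{-1}(1)}$ to $\Re\mu_{\sigma^{-1}(1)}=R$ for arbitrarily large $R>0$; since $a^{\sigma\mu}$ contains the monomial $a_1^{\mu_{\sigma^{-1}(1)}}$, the shift produces a factor $|a_1|^R$ while $\Theta$ and the other pieces of the integrand remain bounded. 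Letting $R\to\infty$ forces the $\sigma$-contribution to vanish unless $|a_1|$ is bounded from below by an absolute constant, which is exactly the desired conclusion.

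The main obstacle is to verify that no residues are picked up during each contour shift. While $\Theta(\mu,\Pi)$ is holomorphic in the half-space $\{\Re\mu_{\sigma^{-1}(1)}\geq 0\}$ by temperedness and the definition of $C(\Pi)$, the individual Weyl summands in the Casselman--Shalika decomposition have spurious poles along Weyl walls $\mu_i=\mu_j$ that must be tracked against the zeros of the Plancherel density $|c(\mu)|^{-2}$ and the structure of $\langle f,W_\mu\rangle$. The standard mechanism (paralleling the proof of the Casselman--Shalika formula itself) is that these singularities cancel after Weyl-symmetrisation; because the Weyl sum is finite, the overall argument is essentially combinatorial rather than analytic, accounting for the simplification over the archimedean case noted at the outset of \S\ref{sketch-n-ge-2}.
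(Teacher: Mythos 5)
Your overall strategy — Whittaker--Plancherel plus the $\GL(n+1)\times\GL(n)$ local functional equation, then Shintani's (Casselman--Shalika) formula to write the spherical Whittaker function as a finite Weyl sum, then a contour shift to kill the integral when $|a_1|$ is small — is the one the paper sketches, and your observation that the finiteness of the Weyl sum dispenses with the $\pop$-dichotomy and the $M$-Whittaker machinery is also correct.

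The gap is in your justification that the contour shift crosses no poles. You write that the Weyl-wall singularities of $c(\sigma\mu)$ "cancel after Weyl-symmetrisation," but Weyl-symmetrisation only makes the \emph{summed} function $W_\mu$ regular; it does nothing for an individual $\sigma$-summand, which is the object whose contour you are shifting. Shifting $\mu_{\sigma^{-1}(1)}$ for a single $\sigma$ will generically cross Weyl-wall poles, and their residues do not cancel term-by-term across the different $\sigma$'s, since each summand involves a different shifted variable. The correct mechanism, which the paper carries out and which you only allude to, is to multiply by the Plancherel density $|c(\mu)|^{-2}$ \emph{before} expanding: its zeros kill the Vandermonde denominator of Shintani's formula, leaving a polynomial $\prod_{i>j}(\alpha_i-\alpha_j)$ times $\det\bigl((\alpha_j^{m_i+n-i-1})_{i,j}\bigr)$. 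Each monomial term of that determinant is a Laurent polynomial with poles only at $\alpha_j=0$ of bounded order, so $\int_{S^1}\alpha_j^{m_1}H_j(\alpha)\,d\alpha_j$ vanishes for $m_1$ above that fixed order; this is simply Cauchy's theorem, and your $R\to\infty$ shrink then genuinely crosses no residues. You should also note that the paper's sketch restricts to supercuspidal $\Pi$ precisely so that $\gamma(1/2,\Pi\otimes\bar\pi_\mu)$ reduces to the $\epsilon$-factor, which is entire and (after the $\omega_\pi^{-1}(C(\Pi))$ twist) actually $\mu$-independent; appealing only to temperedness does not account for the $L$-factor poles present for general $\Pi$.
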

We emphasize the similarity of this proposition with \cite[\S5 Lemme]{JPSS1}. For simplicity, in the sketch of the proof we assume that $\Pi$ is supercuspidal.

\begin{proof}[Sketch of the proof]
Our point of departure is, as in the archimedean case, the $p$-adic Kontorovich-Lebedev-Whittaker transform (for a proof in the case of $F=\Q_p$ see \cite{Gu}, for general Whittaker--Plancherel formula we refer to \cite{D}) and $\GL(n+1)\times\GL(n)$ local functional equation. We obtain
$$\We\left[\begin{pmatrix}C(\Pi)&\\&a\end{pmatrix}w\right]=\int_{\pi}W_\pi(a)\gamma(1/2,\Pi\otimes\bar{\pi})\omega^{-1}_\pi(C(\Pi))\langle \mathrm{char}_{\mathfrak{o}^\times},W_\pi\rangle d\mu_p(\pi).$$
Here $\pi$
runs over the spherical tempered dual of $\GL_n(F)$ and $d\mu_p$ is the Plancherel measure on it. Let $m\in \Z^n$ and $a=\diag(\varpi^m)$, i.e. $a_i=\varpi^{m_i}$. Let $\alpha\in (S^1)^n$ be the Langlands parameters of $\pi$ and $W_\pi$ is the spherical Whittaker function of $\pi$ described by Shintani's formula \cite{Sh} below.
\begin{equation}
    W_\pi(a)=\begin{cases}&\delta^{1/2}(a)\frac{\det((\alpha_j^{m_i+n-i})_{i,j})}{\prod_{i<j}(\alpha_i-\alpha_j)},\text{ if }m_1\ge\dots\ge m_n,\\
    &0,\text{ if otherwise}.\end{cases}
\end{equation}
Thus we may restrict $a$ to be of the form $\diag(\varpi^m)$ with $m_1\ge\dots\ge m_n$. 
Inserting this formula
for $W_\pi$
and
explicating the Plancherel
density
(see \cite{Gu}),
we may rewrite $\We\left[\begin{pmatrix}C(\Pi)&\\&a\end{pmatrix}w\right]$ as
\begin{equation}
    \delta^{1/2}(a)\int_{(S^1)^n}\det((\alpha_j^{m_i+n-i-1})_{i,j})\gamma(1/2,\Pi\otimes\bar{\pi})\omega^{-1}_\pi(C(\Pi))\prod_{i>j}(\alpha_i-\alpha_j)d\alpha.
\end{equation}
We note that, as $\Pi$ is supercuspidal and $\pi$ is unitary unramified $\gamma(s,\Pi\otimes\bar{\pi})$ is same as $\epsilon(s,\Pi\otimes\bar{\pi})$ and
$$\omega_\pi^{-1}(C(\Pi))\epsilon(1/2,\Pi\otimes\bar{\pi})$$
is independent of $\pi$ and is bounded. To see this, recall that if $\pi$ has Langlands parameters $\{\alpha_i\}_{i=1}^n$ with $\alpha_i:=p^{\beta_i}$, then
$$\epsilon(1/2,\Pi\otimes\bar{\pi})=\prod_{i=1}^n\epsilon(1/2-\beta_i,\Pi).$$
From \eqref{eqn:abs-gamma-is-C} we obtain $$\epsilon(1/2-\beta_i,\Pi)={C(\Pi)}^{\beta_i}\epsilon(1/2,\Pi).$$
From the above formula and unitarity of $\epsilon(1/2,\Pi)$ the claim is immediate.

We will now proceed along the sketch we have provided in the previous subsection in the archimedean case. The analogue of ``decomposing the spherical Whittaker function into finitely many $M$-Whittaker function'' is expanding the determinant
$\det((\alpha_j^{m_i+n-i})_{i,j})$ into various monomials depending on $\alpha_i$; i.e., the analogue of $M$-Whittaker function is a monomial of the form $\prod_j\alpha_j^{n_j}$. We distribute the integral over this decomposition. A generic term in this decomposition will look like
$$\delta^{1/2}(a)\int_{(S^1)^n}\alpha_j^{m_1}H_j(\alpha)d\alpha,$$
where $H_j(\alpha)$ is a meromorphic function in $\{\alpha\mid |\alpha_j|\le 1\}$ with poles at most at $\alpha_i=0$, such that order of the pole at $\alpha_j=0$ is bounded (i.e. does not depend on $m$). Thus making $m_1$ sufficiently positive we compute the $\alpha_j$ integral to be zero. Hence we conclude.
\end{proof}


\section{Basic Notations and Auxiliary Lemmata}\label{sec:auxiliary-lemmas}
In the section we will recall some notations which we will use frequently. We will also need some some well-known tools from representation theory of $G:=\GL_n(\R)$, which we will describe in the next few subsections. 

We use the Iwasawa decomposition of $G=NAK$ with $N$ being the maximal unipotent subgroup of upper triangular matrices, $A$ being the subgroup of the positive diagonal matrices, and $K=\mathrm{O}(n)$. We fix Haar measure $dg, dn, dk$ on $G, N, K$, respectively such that the volume of $K$ is one with respect to $dk$, and 
$$dg=dn\frac{da}{\delta(a)}dk,\quad g=nak,$$
where $da$ on $A\ni \diag(a_1,\dots a_n)$ is given by $\prod_i d^\times a_i$ and $\delta$ is the modular character on $NA$. We will use similar Haar measures on $\GL(r)$ (and its subgroups) for any $r$ without mentioning explicitly.

We introduce a modified Vinogradov notation. Let $\epsilon>0$ be a fixed small quantity (say, $< n^{-10}$). In this article we abbreviate the inequality
$$\varphi_1(a,\dots)\ll_{\epsilon,\dots} \varphi_2(a,\dots)\prod_{i=1}^n(a_i+a_i^{-1})^\epsilon $$
by
$$\varphi_1(a,\dots)\prec_{\dots} \varphi_2(a,\dots),$$
where $\varphi_i$ are some functions on $a,\dots$.

\subsection{Additive character} Recall the maximal unipotent $N<G$. We fix an additive character $\psi$ of $N$, which is given by
\begin{equation}\label{add-char-of-n}
    \psi(n(x))=e\left(\sum_{i=1}^{n-1}x_{i,i+1}\right),\quad n(x):=(x_{i,j})\in N.
\end{equation} 
where $e(z):=\exp(2\pi i z)$. By $\tilde{\psi}$ we will denote the similarly defined character of $N_{n+1}$ which is the maximal unipotent subgroup of $\GL_{n+1}(\R)$.

\subsection{Whittaker and Kirillov models}\label{sec:whittaker-kirillov}
For details of this subsection we refer to \cite[Chapter $3$]{J2}. Let $\Pi$ be a generic irreducible unitary representation of $\GL_{n+1}(\R)$. Let $\tilde{\psi}$ be the character of $N_{n+1}<\GL_{n+1}(\R)$ as in \eqref{add-char-of-n}. Recall that $\Pi$ is generic if $$\mathrm{Hom}_{\GL_{n+1}(\R)}(\Pi,\mathrm{Ind}_{N_{n+1}}^{\GL_{n+1}(\R)}\tilde{\psi})\neq 0.$$
It is known that if $\Pi$ is generic then the above space is one dimensional. Let $\lambda$ be a nonzero element in this $\mathrm{Hom}$-space. Then the Whittaker model $\W(\Pi,\tilde{\psi})$ of $\Pi$ is the image of $\lambda$. One writes 
$$W_v(g)=\lambda(\Pi(g)v),\quad g\in \GL_{n+1}(\R), v\in \Pi.$$
If $\Pi$ is unitary then the corresponding unitary structure on $\W(\Pi,\tilde{\psi})$ is given by
$$\langle W_1,W_2\rangle_{\W(\Pi,\tilde{\psi})} =\int_{N\backslash G}W_1\left[\begin{pmatrix}g&\\&1\end{pmatrix}\right]\overline{W_2\left[\begin{pmatrix}g&\\&1\end{pmatrix}\right]}dg,$$
for $W_1,W_2\in \W(\Pi,\tilde{\psi})$.

Let $C_c^\infty(N\backslash G,\psi)\ni \phi$ be the set of smooth functions on $G$ compactly supported mod $N$ such that 
$$\phi(ng)=\psi(n)\phi(g), \quad n\in N, g\in G.$$ The theory of Kirillov model states that \cite[Proposition $5$]{J2} there exists a unique $W_\phi\in \W(\Pi,\tilde{\psi})$ such that
$$W_\phi\left[\begin{pmatrix}g&\\&1\end{pmatrix}\right]=\phi(g),$$
and the map $\phi\mapsto W_\phi$ is continuous.

\subsection{Langlands parameters}\label{langlands-parameters}
For a representation $\pi$ of $\GL_m(\R)$ let us write its $L$-factor $$L(s,\pi)=\prod_{i=1}^m\Gamma_\R(s+\mu_i(\pi)),$$
where $\mu_i(\pi)\in \C$ are the {Langlands Parameters} attached to $\pi$. In this case we define the \textit{analytic conductor} of $\pi$ to be
$$C(\pi)=\prod_{i=1}^m(1+|\mu_i(\pi)|).$$
By $\mu$ we will denote a complex $n$-tuple $(\mu_1,\dots,\mu_n)$. We define a quantity
$$c(s,\mu):=\prod_{i<j}\Gamma_\R\left(s+{\mu_i-\mu_j}\right).$$
We abbreviate $c(0,\mu)$ as $c(\mu)$.

\subsection{Whittaker--Plancherel formula} For general discussion on the Whittaker--Plancherel theorem we refer to \cite[Chapter $15$]{W}. Let $\hat{G}$ be the set of isomorphism classes of generic irreducible tempered unitary representations of $G$. Let $\hat{G}_0\subseteq\hat{G}$ be the isomorphism classes of spherical representations (by spherical representation we mean a representation which contains a right $K$-invariant vector). One can write down the general Whittaker--Plancherel formula for $G$ as follows. Let $F\in L^2(N\backslash G,\psi)$ continuous. Then,
\begin{equation}\label{genwhitplan}
    F(g)=\int_{\hat{G}}\sum_{W\in\B(\pi)}W(g)\langle F, W\rangle d\mu_p(\pi),
\end{equation}
where $d\mu_p$ is the Plancherel measure on $\hat{G}$, and $$\langle F, W\rangle:=\int_{N\backslash G} F(g)\overline{W(g)}dg.$$
Here $\B(\pi)$ is an orthonormal basis of $\pi$. The above sum does not depend on a choice of $\B(\pi)$.

We may choose a basis $$\B(\pi):=\cup_{\tau\in\hat{K}}\{W_\tau^i\mid 1\le i\le n_\tau\}$$ consisting of $K$-isotypic vectors. Here $\{W_\tau^i\}_{i=1}^{n_\tau}$ is an orthonormal basis of $\tau$-type. We also know that $n_\tau=1$ if $\tau$ is the trivial representation. We will now produce a rather simplified version of the Whittaker--Plancherel formula for spherical functions i.e. functions which are right $K$-invariant. We first note that, if $F\in L^2(N\backslash G,\psi)^K$ then,
$$\langle F, W\rangle=0,\quad\text{for all } W\in\B(\pi)\setminus\pi^K.$$
Therefore for spherical $F$ only the spherical representations will contribute to the right hand side of \eqref{genwhitplan}. Let $W_\pi\in \pi^K$ with $\|W_\pi\|=1$. Then \eqref{genwhitplan} reduces to
\begin{equation}\label{spherwhitplan}
    F(g)=\int_{\hat{G}_0}W_\pi(g)\langle F, W_\pi\rangle d\mu_p(\pi).
\end{equation}

\subsection{Local functional equation}\label{sec:local-funct-equat}
Local Rankin--Selberg zeta integral for a representation $\W(\Pi,{\tilde{\psi}})\ni \We$ of $\GL_{n+1}(\R)$ and $\W(\pi,\bar{\psi})\ni \V$ of $\GL_n(\R)$ is defined by
$$\int_{N\backslash G}\We\left[\begin{pmatrix}g&\\&1\end{pmatrix}\right] {W(g)}|\det(g)|^s dg.$$
If $\Pi$ is $\theta$-tempered and $\pi$ is tempered then the above integral is absolutely convergent if $\Re(s)>-1/2+\theta$.

Let $\omega_\pi$ be the central character of $\pi$. Then for any $W\in \W(\pi,\bar{\psi})$ the local functional equation is (see \cite{JS1, JS2})
\begin{multline}\label{primelfe}
    \int_{N\backslash G}{\We}\left[w\begin{pmatrix}g^{-t}&\\&1\end{pmatrix}\right]{W}(w'g^{-t})|\det(g)|^{-s}dg\\
    \qquad=\omega_\pi(-1)^n\gamma(1/2+s,\Pi\otimes\pi)\int_{N\backslash G}\We\left[\begin{pmatrix}g&\\&1\end{pmatrix}\right] {W(g)}|\det(g)|^s dg,
\end{multline}
where $w,w'$ are long Weyl elements of $\GL_{n+1}$ and $G$ respectively, and 
$$\gamma(s,\Pi\otimes\pi):=\epsilon(s,\Pi\otimes\pi)\frac{L(1-s,\widetilde{\Pi}\otimes\widetilde{\pi})}{L(s,\Pi\otimes\pi)},$$
and $\epsilon(s,.)$ is the epsilon factor attached to $\pi$. It is known that $\epsilon$-factors are entire in $s$ and have absolute value one, thus are constants $\epsilon(\pi)$.
Changing variable $g\mapsto w'g^{-t}w'$ we can also rewrite \eqref{primelfe} as
\begin{multline}\label{lfe}
    \int_{N\backslash G}{\We}\left[\begin{pmatrix}1&\\&g\end{pmatrix}w\right]{W}(gw')|\det(g)|^{s} dg\\
    \qquad=\omega_\pi(-1)^n\gamma(1/2+s,\Pi\otimes\pi)\int_{N\backslash G}\We\left[\begin{pmatrix}g\\&1\end{pmatrix}\right]{W(g)}|\det(g)|^s dg.
\end{multline}
Here in the LHS the matrix in the entry of $\We$ is in the $1+n$ block diagonal form.

\subsection{Spherical tempered dual}
One can parametrize $\hat{G}_0$ by 
$$\{\mu:=(\mu_1,\dots,\mu_n)\mid \mu_i\in i\R, 1\le i\le n\},$$
where a purely imaginary $n$-tuple $\mu$ corresponds with the induced representation 
$$\pi_{\mu}:=\mathrm{Ind}_B^G\chi,\quad\chi\left(na\right)=\prod_{i=1}^n|a_i|^{\mu_i}, n\in N,$$
where $a:=\diag(a_1,\dots,a_n)$,
because any tempered spherical representation of $G$ is of the above form. 
For later purpose, for $\mu\in\C^n$ we define the quantity
\begin{equation}\label{defn-d-mu}
d(\pi_\mu):=d(\mu):=1+\sum_{j=1}^n|\Im(\mu_j)|^2.
\end{equation}

\subsection{Conductors and gamma-factors}
Recall the definition of $\gamma$-factor from \S\ref{sec:local-funct-equat} and the definition of Conductor from \S\ref{langlands-parameters}.
\begin{lemma}\label{boundgammafactor}
Let $\Pi$ and $\pi$ be generic irreducible unitary representations of $\GL_{n+1}(\R)$ and $G$, respectively. Then
\begin{enumerate}
\item For $s\in\C$
  of bounded real part and a
  fixed positive distance
  away from any pole or zero of
  $\gamma(1/2-s,\pi)$,
  we have
    $$\gamma(1/2-s,\pi)\asymp C(\pi\otimes|\det|^{\Im(s)})^{\Re(s)}.$$
    \item $\frac{C(\Pi)^n}{C(\pi)^{n+1}}\le C(\Pi\otimes\pi)\le C(\Pi)^nC(\pi)^{n+1}$.
\end{enumerate}
\end{lemma}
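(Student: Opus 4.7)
I would prove the two parts separately. Part (1) is a routine application of Stirling's formula to the explicit Gamma-factor representation of $\gamma(1/2-s,\pi)$, combined with the characterization of the conductor via the epsilon factor. Part (2) is a purely combinatorial consequence of the fact that the Langlands parameters of the Rankin--Selberg product are pairwise sums.

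For (1), I would first decompose
\[
\gamma(1/2 - s, \pi) = \epsilon(1/2 - s, \pi) \prod_{j=1}^n \frac{\Gamma_\R(1/2 + s + \tilde{\mu}_j)}{\Gamma_\R(1/2 - s + \mu_j)},
\]
where $\mu_j = \mu_j(\pi)$ are the Langlands parameters of $\pi$ and $\tilde{\mu}_j$ are those of $\tilde{\pi}$ (equal to $\bar{\mu}_j$ for unitary $\pi$). From \eqref{eqn:abs-gamma-is-C} one has $|\epsilon(1/2-s,\pi)| = C(\pi)^{\Re(s)}$. To each Gamma ratio I would apply the Stirling asymptotic $|\Gamma(x+iy)| \asymp |y|^{x-1/2} e^{-\pi|y|/2}$. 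The crucial observation is that the two arguments $(1/2+s+\tilde{\mu}_j)/2$ and $(1/2-s+\mu_j)/2$ have exactly opposite imaginary parts, so the exponential decay factors cancel and only the polynomial prefactors survive, yielding
\[
\left|\frac{\Gamma_\R(1/2 + s + \tilde{\mu}_j)}{\Gamma_\R(1/2 - s + \mu_j)}\right| \asymp (1 + |\Im(s) - \Im(\mu_j)|)^{\Re(s)},
\]
valid under the stated bounded-$\Re(s)$ and away-from-singularities hypotheses. Multiplying these against the epsilon-factor contribution and re-identifying the result via the fact that the Langlands parameters of $\pi$ shift additively under the twist $|\det|^{\Im(s)}$ (and that $\Re(\mu_j)$ is bounded by the unitarity of $\pi$, which ensures $|\mu_j + \Im(s)| \asymp 1 + |\Im(\mu_j) \pm \Im(s)|$ up to sign) gives the claimed $\asymp C(\pi \otimes |\det|^{\Im(s)})^{\Re(s)}$.

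For (2), I would start from the well-known formula that the Langlands parameters of the Rankin--Selberg product $\Pi \otimes \pi$ are the pairwise sums $\{\mu_i(\Pi) + \mu_j(\pi) : 1 \le i \le n+1,\ 1 \le j \le n\}$, so
\[
C(\Pi \otimes \pi) = \prod_{i,j}(1 + |\mu_i(\Pi) + \mu_j(\pi)|).
\]
The upper bound follows immediately from the triangle inequality in the form $1 + |a + b| \le (1 + |a|)(1 + |b|)$, which multiplies up over all pairs $(i,j)$ to give $C(\Pi \otimes \pi) \le C(\Pi)^n C(\pi)^{n+1}$. For the lower bound, I would apply the complementary inequality $1 + |a| \le (1 + |a + b|)(1 + |b|)$ (a direct consequence of $|a| \le |a + b| + |b|$), and multiply over all pairs $(i,j)$ to obtain $C(\Pi)^n \le C(\Pi \otimes \pi) \cdot C(\pi)^{n+1}$, which rearranges to the claimed lower bound. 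The only genuine subtlety is in (1), namely the bookkeeping between the sign and conjugation conventions for $\gamma(1/2-s, \pi)$, $\tilde{\pi}$, and the twist $|\det|^{\Im(s)}$; the actual analytic input is just a single application of Stirling's formula after the exponential cancellation.
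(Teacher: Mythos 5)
Your proof of part (2) is essentially the paper's. The paper cites \cite[Appendix A]{H1} for the upper bound and reduces, via the Langlands classification, to tensor products of one- and two-dimensional representations of $W_\R$, where the parameters of a tensor product are (up to bounded shifts) pairwise sums; your two inequalities $1+|a+b|\le(1+|a|)(1+|b|)$ and $1+|a|\le(1+|a+b|)(1+|b|)$ are exactly the ones the paper invokes. Your assertion that the Rankin--Selberg parameters are \emph{literally} $\{\mu_i(\Pi)+\mu_j(\pi)\}$ glosses over the fact that two two-dimensional $W_\R$-summands tensor to a sum of two two-dimensionals rather than four characters, but this changes the parameters only by bounded shifts and does not affect the estimate.

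Your proof of part (1), however, contains a genuine error: the claim $|\epsilon(1/2-s,\pi)| = C(\pi)^{\Re(s)}$. The equation \eqref{eqn:abs-gamma-is-C} that you cite is the paper's characterization of the conductor at a \emph{non-archimedean} place, where ramification lives in the $\epsilon$-factor. At the archimedean place the $\epsilon$-factor is \emph{independent of $s$} and has modulus one: the conductor exponent of any representation of $W_\R$ with respect to the standard additive character is zero, and all conductor growth is carried by the $\Gamma$-factors. With $|\epsilon(1/2-s,\pi)|=1$, your Stirling computation on the ratios of $\Gamma_\R$'s already gives
\[
|\gamma(1/2-s,\pi)|\asymp\prod_{j=1}^n\bigl(1+|\Im(s)-\Im(\mu_j)|\bigr)^{\Re(s)},
\]
and this \emph{is} the claimed estimate: the parameters of $\pi$ twisted by the unitary character $|\det|^{i\Im(s)}$ are $\mu_j+i\Im(s)$, so since $\Re(\mu_j)$ is bounded one has $1+|\mu_j+i\Im(s)|\asymp 1+|\Im(\mu_j)+\Im(s)|$, and the sign of $\Im(\mu_j)$ is immaterial because the parameter multiset of a unitary representation is conjugation-symmetric. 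Inserting the spurious $C(\pi)^{\Re(s)}$ over-counts by exactly that factor: at $s=\sigma>0$ real, where the lemma asserts $\gamma(1/2-\sigma,\pi)\asymp C(\pi)^\sigma$, your expression reads $C(\pi)^{2\sigma}$ instead, and the informal ``re-identification'' you propose cannot absorb a discrepancy of $C(\pi)^{\Re(s)}$. The fix is to drop the $\epsilon$-factor step entirely; your Stirling cancellation is then the whole proof, as the paper indicates by pointing to \cite{Br}.
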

\begin{proof}
$(1)$ is standard and follows from the Stirling approximation of the $L$-factors, for e.g., (see \cite{Br}). The second inequality of $(2)$ is obtained in the \cite[Appendix A]{H1}. The first inequality can be proved in the very same way as the other one. As in \cite[Appendix A]{H1} one can appeal to Langlands classification of the admissible dual and reduce to the case of representations $\Phi$ and $\phi$ of the Weil-Deligne group of $\R$. For instance, let both $\Phi$ and $\phi$ are one dimensional representations with Langlands parameters $(\mu,0)$ and $(\nu,0)$, respectively, then the parameter of $\Phi\otimes\phi$ can be given by $(\mu+\nu,0)$. Then the first inequality follows from
$$(1+|\mu|)\le (1+|\mu+\nu|)((1+|\nu|).$$
Rest of the cases follow similarly.
\end{proof}

For Brevity, we define $\Theta:\hat{G}\to \C$ by
$$\pi\mapsto\Theta(\pi,\Pi):=\omega_\pi^{-1}(C(\Pi))\gamma(1/2,\Pi\otimes\bar{\pi}),$$
where $\omega_\pi$ is the central character of $\pi$.
If $\pi$ is the spherical representation $\pi_\mu$ for some $\mu\in \C^n$ we, by abuse of notation, denote $\Theta(\pi_\mu,\Pi)$ by $\Theta(\mu,\Pi)$. We record that it follows from Lemma \ref{boundgammafactor}
\begin{equation}\label{boundG}
    \Theta(\mu+2M,\Pi)\ll_M \prod_{i=1}^n (1+|\mu_i|)^{O_M(1)}.
\end{equation}
for $M\in\Z^n_{\ge 0}$ fixed and $\mu\in \C^n$ with $0\le \Re(\mu)\ll 1$. We also note that if $\Pi$ is $\theta$-tempered for $0\le \theta<1/2$ then $\Theta(\mu,\Pi)$ is holomorphic for $\Re(\mu_i)\ge 0$.

\subsection{Explicit Plancherel measure} We describe the Plancherel measure explicitly in the case of the spherical Whittaker--Plancherel transform \eqref{spherwhitplan}. From (see \cite{GK1}) we get that if $\pi_\mu\in\hat{G_0}$ for some $\mu\in i\R^n$ then
\begin{equation}\label{planchereldensity}
    d\mu_p(\pi_\mu)=\left|\frac{c(1,\mu)}{c(0,\mu)}\right|^2d\mu_1\dots d\mu_n,
\end{equation}
where $d\mu_i$ are the Lebesgue measures on $i\R$ normalized by $2\pi i$.

\subsection{Differential operator and Sobolev norm} Let $\{X_i\}$ be a basis of $\g:=\mathrm{Lie}(G)$. We define, for each $M\ge 0$, a second order differential operator by
\begin{equation}\label{defdiffop}
\D_M:=M+1-\sum_{i=1}^{n^2}(X_i^2).
\end{equation}
We abbreviate $\D_0$ as $\D$. We define a Sobolev norm on the space of $\pi\in \hat{G}$ by
\begin{equation}\label{sobolev-norm}
    S_d(v):=\|\D^d v\|_\pi,
\end{equation}
A similar sort of Sobolev norm has been used in \cite{MV}.

\begin{lemma}\label{diffop}
Let $\D_M$ be the differential operator in \eqref{defdiffop}.
\begin{enumerate}
    \item $\D_M$ is self-adjoint and positive definite on unitary representations of $G$. Eigenvalues of $\D_M$ are at least $M+1$.
    \item If $C_G$ and $C_K$ denote the Casimir elements for the groups $G$ and $K$, respectively then,
    $$\D_M=M-(1+C_G)+2(1+C_K).$$
    \item $C_G$ acts on $\pi_\mu$ by the scalar $\lambda(\pi_\mu):=-T+\|\mu\|^2$, where $\|\mu\|^2:=\sum_{i=1}^n|\mu_i|^2$ and $T>0$ is an absolute constant depending only on $n$.
    \item Eigenvalue of the spherical vector in $\pi_\mu$ under $\D_M$ is of size $\asymp 1+\|\mu\|^2$.
\end{enumerate}
\end{lemma}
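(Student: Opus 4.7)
All four parts are formal consequences of standard representation-theoretic facts, and the plan is simply to assemble them in order.

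For part (1), the plan is to exploit the fact that in a unitary representation $\pi$ of $G$, every element $X \in \g$ acts on the space of smooth vectors as a skew-adjoint operator (obtained by differentiating the one-parameter group of unitaries $\pi(\exp tX)$). Consequently each $\pi(X_i)^2$ is a non-positive self-adjoint operator, so $-\sum_i \pi(X_i)^2$ is non-negative and self-adjoint. Adding the scalar $M+1$ gives that $\pi(\D_M)$ is self-adjoint with spectrum contained in $[M+1,\infty)$.

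For part (2), I would fix the basis $\{X_i\}$ by choosing it to be the union of a basis of $\mathfrak{p}$ (symmetric matrices) orthonormal under the trace form $B(X,Y)=\tr(XY)$, together with a basis $\{Y_j\}$ of $\mathfrak{k}=\mathrm{Lie}(K)$ orthonormal under $-B$. With this choice, $B$-duality gives $C_G = \sum X_i^2 - \sum Y_j^2$, while $C_K = -\sum Y_j^2$ (normalized so as to annihilate the trivial $K$-type). A two-line algebraic manipulation then yields $-\sum_i X_i^2 - \sum_j Y_j^2 = -C_G + 2C_K$, and adding $M+1$ gives the identity in the form stated.

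For part (3), the plan is to invoke the Harish-Chandra homomorphism $Z(U(\g)) \to S(\mathfrak{a})^W$. Since $C_G$ is central, it acts on $\pi_\mu$ by a scalar $\lambda(\pi_\mu)$, and by Harish-Chandra's theorem that scalar is a Weyl-invariant polynomial of degree $2$ in $\mu$ whose leading quadratic part is (up to the usual $\rho$-shift) the quadratic form $\|\cdot\|^2$ coming from the trace form on $\mathfrak{a}$. Evaluating this polynomial and absorbing the $\rho$-shift and all lower-order terms into the absolute constant $-T$ depending only on $n$ produces the stated formula. Part (4) is then immediate: a spherical vector $v_\mu \in \pi_\mu$ is $K$-invariant, hence killed by every $Y_j$, so $C_K v_\mu = 0$ and thus $1+C_K$ acts as $1$ on $v_\mu$; combining with parts (2) and (3) gives $\pi_\mu(\D_M)v_\mu = \bigl(M + 1 - \lambda(\pi_\mu)\bigr)v_\mu$, which is of size $\asymp 1 + \|\mu\|^2$ by part (3).

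The only step that requires genuine care rather than quotation is part (2), where one must fix a basis compatible with the Cartan decomposition and keep track of signs between the trace form (which is indefinite on $\g$) and the positive definite form $-B|_{\mathfrak{k}} \oplus B|_{\mathfrak{p}}$ used to formulate ``$\sum X_i^2$.'' Once that bookkeeping is settled, the remaining steps are quotations of the infinitesimal character theory and the triviality of the $K$-action on spherical vectors.
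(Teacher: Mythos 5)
Your proposal is correct and takes essentially the same route as the paper: part (1) from skew-adjointness of the Lie algebra action (which the paper cites to Nelson--Stinespring), part (2) by decomposing a well-chosen basis along the Cartan decomposition $\g = \mathfrak{k} \oplus \mathfrak{p}$ and matching signs with $C_G$ and $C_K$, part (3) by quoting the infinitesimal character, and part (4) from $K$-invariance of the spherical vector. One small point in your favor: you correctly take $\mathfrak{p}$ to be the symmetric matrices (the Cartan complement of $\mathfrak{k}$), whereas the paper's proof loosely calls $\mathfrak{p}$ ``the Lie algebra of $NA$''; since the identity $C_G = \sum_{\mathfrak{p}} Y^2 - \sum_{\mathfrak{k}} X^2$ requires $\mathfrak{p} \perp \mathfrak{k}$ under the trace form, your reading is the one that makes the paper's own computation valid.
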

\begin{proof}
$(1)$ is standard and follows from the fact that $\sum_{i=1}^{n^2}X_i^2$ is self-adjoint and negative definite. This can be found in \cite{NS}.
To prove $(2)$ note that $\g=\mathfrak{p}+\mathfrak{k}$ where $\mathfrak{p}$, and $\mathfrak{k}$ are Lie algebras of $NA$ and $K$, respectively. We fix bases $\{X_i\}_i$ of $\mathfrak{k}$ and $\{Y_j\}_j$ of $\mathfrak{p}$. Thus from the definitions of the standard Cartan involution and Killing form \cite[Chapter VIII]{K} we get that
$$C_G=-\sum_{X_i\in\mathfrak{k}}X_i^2+\sum_{Y_i\in\mathfrak{p}}Y_i^2,\quad C_K=-\sum_{X_i\in\mathfrak{k}}X_i^2.$$
Thus from the definition of \eqref{diffop} we get that 
$$\D_M=M+1-\sum_{X_i\in\mathfrak{k}}X_i^2-\sum_{Y_i\in\mathfrak{p}}Y_i^2=M-(1+C_G)+2(1+C_K).$$
$(3)$ is standard in literature (see e.g. \cite[p.2]{BHM}). $(4)$ follows from $(3)$ as $C_K$ will act trivially on the spherical vector in $\pi_\mu$.
\end{proof}

\begin{lemma}\label{sobolev-norm-property}
Let $S_d$ be the Sobolev norm defined in \eqref{sobolev-norm}. Then for $d_1,d_2>0$ there exists $L:=L(d_1,d_2)>0$ such that 
$$\int_{\hat{G}}C(\pi)^{d_1}\sum_{W\in \B(\pi)}S_{d_2}(W)S_{-L}(W)d\mu_p(\pi)$$
is convergent. Here $\B(\pi)$ is an orthonormal basis of $\pi$ consisting of eigenvectors of $\D$..
\end{lemma}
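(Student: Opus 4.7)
The plan is to reduce the integrand to a purely polynomial expression in the Langlands parameters of $\pi$, and then absorb all polynomial growth into a sufficiently negative power of $\lambda_W$. Since each $W \in \B(\pi)$ is a $\D$-eigenvector with eigenvalue $\lambda_W \geq 1$ (Lemma \ref{diffop}(1)) and $\|W\|_\pi = 1$, we have $S_d(W) = \lambda_W^d$, so the integrand equals $C(\pi)^{d_1} \sum_{W \in \B(\pi)} \lambda_W^{d_2 - L}$.

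The first step is to analyze the inner sum via the $K$-type decomposition $\B(\pi) = \bigsqcup_{\tau \in \hat{K}} \{W^i_\tau\}_{i=1}^{n_\tau(\pi)}$. By Lemma \ref{diffop}(2)--(4), one has $\lambda_{W^i_\tau} \asymp d(\pi) + c_\tau$, where $c_\tau$ is the $K$-Casimir eigenvalue of $\tau$ and $d(\pi)$ is as in \eqref{defn-d-mu} (extended from the spherical case to arbitrary tempered $\pi$ via the inducing parameters). Admissibility of $\pi$ gives $n_\tau(\pi) \leq \dim \tau$, while both $\dim \tau$ and the counting function $\#\{\tau \in \hat{K} : c_\tau \leq R\}$ grow polynomially in $R$ by the Weyl dimension formula and lattice-point counting in the dominant chamber of $\hat{K}$. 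Splitting the $\tau$-sum at $c_\tau \leq d(\pi)$ versus $c_\tau > d(\pi)$, and using $\lambda_W \asymp d(\pi)$ respectively $\lambda_W \asymp c_\tau$ in these two regimes, a routine polynomial bookkeeping yields
\[
\sum_{W \in \B(\pi)} \lambda_W^{d_2 - L} \ll d(\pi)^{A + d_2 - L}
\]
for a fixed exponent $A = A(n)$ and all $L$ exceeding a threshold depending only on $n$ and $d_2$.

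Next, I would use that $C(\pi) \ll d(\pi)^{n/2}$ (immediate from the definition of the analytic conductor and the Langlands parameters of tempered $\pi$), and that the Plancherel density on $\hat{G}$ is polynomial in the continuous parameters of some fixed degree $B = B(n)$ (this is explicit in the spherical case from \eqref{planchereldensity}, and analogous in the non-spherical tempered case; see \cite{W}). Combining these bounds, the integral is majorized by
\[
\int_{\hat{G}} d(\pi)^{n d_1/2 + A + d_2 - L + B} \, d\mu_p(\pi),
\]
which, interpreted as a polynomial integral over the finite-dimensional tempered parameter space of $\GL_n(\R)$, converges once $L = L(n, d_1, d_2)$ is taken sufficiently large.

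The main obstacle is the careful bookkeeping of polynomial exponents across the $K$-type sum, the conductor, and the Plancherel density, as well as verifying the analogue of Lemma \ref{diffop}(3)--(4) for non-spherical tempered representations; each of these is standard but requires some care.
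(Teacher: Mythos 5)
Your argument is correct but takes a more explicit route than the paper's. The paper's proof writes the inner sum as $\mathrm{Trace}_\pi(\D^{d_2-L})$, produces an element $P_{d_1}$ of the center of the universal enveloping algebra satisfying $C(\pi)^{d_1} \ll \lambda_\pi(P_{d_1})$, and then appeals to two results from Nelson--Venkatesh \cite{NV}: that $P_{d_1}\D^{-A}$ is a bounded operator for $A$ large, and that $\int_{\hat{G}}\mathrm{Trace}_\pi(\D^{-B})\,d\mu_p(\pi)<\infty$ for $B$ large. These two black-box facts absorb exactly the $K$-type bookkeeping, eigenvalue estimates, conductor bound, and Plancherel-density considerations that you carry out by hand. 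Your version is therefore more self-contained and makes the source of convergence visible, at the cost of having to verify the extension of Lemma \ref{diffop}(3)--(4) and of the explicit Plancherel density \eqref{planchereldensity} to non-spherical tempered representations (with discrete-series factors on $\GL_2$ blocks contributing additional discrete parameters), which you correctly flag as needing care. Both proofs ultimately rest on the same polynomial control of eigenvalues, multiplicities, conductor, and Plancherel measure; the paper simply packages this abstractly via operator-norm and trace-integral results, whereas you unwind it combinatorially.
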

\begin{proof}
Let $\B(\pi)=\{W_i\}_{i\in\N}$ with eigenvalues $\{\lambda_i\}_{i\in \N}$, correspondingly. From $(1)$ of Lemma \ref{diffop} we get that $\lambda_i\ge 1$. Thus,
$$\sum_{W\in\B(\pi)}S_{d_2}(W)S_{-L}(W)= \mathrm{Trace}\mid_\pi(\D^{d_2-L}).$$
There exists an element $P_{d_1}$ in the center of the universal enveloping algebra fo $G$ such that $$C(\pi)^{d_1}\ll \lambda_\pi(P_{d_1}),$$
where $\lambda_\pi(P)$ is the scalar by which $P$ acts on $\pi$. Thus the integral in the question can be bounded by 
$$\int_{\hat{G}}\mathrm{Trace}_\pi(P_{d_1}\D^{d_2-L})d\mu_p(\pi).$$
From \cite[\S$8.5$, Lemma $2$]{NV} we know that for large enough $A$ the operator $P_{d_1}\D^{-A}$ is bounded. Finally, the integral $\int_{\hat{G}}\mathrm{Trace}_\pi(\D^{-B})d\mu_p(\pi)$ is convergent for sufficiently large $B$. A proof of this result can be found in \cite[\S$A.4.2$, Lemma (ii)]{NV}. We conclude our proof by making $L$ large enough.
\end{proof}

\subsection{Spherical Whittaker functions} In this subsection we will work out some relevant analysis of the spherical Whittaker function on $G$. The general references for spherical Whittaker functions are \cite[Chapter $5$]{G}, \cite{St1, St2}, and Jacquet's work \cite{J3}. Let $\mu\in\C^n$. We call $\pi_\mu$ to be the spherical principal series representation with the Langlands parameters $\mu$. Let $W_\mu$ be the spherical vector in $\pi_\mu$ defined by the following normalization of the Jacquet's integral.
\begin{equation}\label{defnwhittaker}
    W_\mu(g):=c(1,\mu)d_n\int_{N}I_\mu(wng)\overline{\psi(n)}dn,\quad g\in G,\Re(\mu_i-\mu_{i+1})>0;
\end{equation}
where  
$$I_\mu(nak):=\delta^{1/2}(a)\prod_{i=1}^n a_i^{\mu_i},\quad n\in N, a\in A, k\in K.$$
Jacquet in \cite{J3} showed that $W_\mu$ has a analytic continuation to $\C^n$ and is invariant under action of the Weyl group on $\mu$. Here $d_n$ is an absolute constant such that when $\mu$ is purely imaginary,
\begin{equation}\label{L2whittaker}
    \|W_\mu\|^2=|c(1,\mu)|^2,
\end{equation}
by Stade's formula \cite[Theorem $1.1$]{St2}. We record two type of bounds of $W_\mu$ we will use at various stages of the proofs.

\begin{lemma}\label{rapiddecay}
Let $\mu$ be purely imaginary. Then for any $M\in \Z^{n-1}_{\ge 0}$
$$\frac{W_\mu(a)}{c(1,\mu)}\prec_{M} \delta^{1/2}(a)d(\mu)^{O_M(1)}\prod_{j=1}^{n-1}\min(1,a_{j+1}/a_j)^{M_j},$$
where $O_M(1)$ denotes a bounded quantity depending on $M$.
\end{lemma}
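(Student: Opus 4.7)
The plan rests on an exact differential identity on the torus, obtained from the left $(N,\psi)$-equivariance of $W_\mu$, inverted to yield decay, with the remaining factor controlled by Sobolev embedding on $N\backslash G$.

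\textbf{Step 1: A right-derivative identity.} Since $W_\mu(ng)=\psi(n)W_\mu(g)$ and the conjugation $a\exp(tE_{j,j+1})a^{-1}=\exp(t(a_j/a_{j+1})E_{j,j+1})$ lies in $N$ for $a\in A$ and $1\le j\le n-1$, differentiating at $t=0$ gives
\[
 (R_{E_{j,j+1}}W_\mu)(a)=2\pi i\,(a_j/a_{j+1})\,W_\mu(a).
\]
The function $R_{E_{j,j+1}}W_\mu$ is again left $\psi$-equivariant, so iteration of this identity over all simple root vectors produces, for any $M=(M_1,\dotsc,M_{n-1})\in\Z_{\ge 0}^{n-1}$ and the fixed PBW monomial $X_M:=\prod_{j=1}^{n-1}E_{j,j+1}^{M_j}\in U(\mathfrak{g})$, the exact identity
\[
 W_\mu(a)\;=\;(2\pi i)^{-|M|}\prod_{j=1}^{n-1}(a_{j+1}/a_j)^{M_j}\cdot (R_{X_M}W_\mu)(a).
\]

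\textbf{Step 2: Pointwise bound on the right-derivative.} Next I would bound $|(R_{X_M}W_\mu)(a)|$ uniformly in $a$. The function $R_{X_M}W_\mu$ still lies in $\W(\pi_\mu,\psi)$, and Stade's formula \eqref{L2whittaker} together with the observation that $\pi_\mu(D)$ applied to the spherical vector produces a vector whose $L^2$-norm is bounded by $d(\mu)^{O_M(1)}|c(1,\mu)|$ for any fixed $D\in U(\mathfrak{g})$ (reducible via PBW and the scalar action of the center of $U(\mathfrak{g})$ as in Lemma \ref{diffop}) gives
\[ \|R_{D\cdot X_M}W_\mu\|_{L^2(N\backslash G)}\prec d(\mu)^{O_M(1)}|c(1,\mu)|. \]
A Sobolev inequality for right-$K$-invariant $\psi$-equivariant functions on $N\backslash G$, with the Iwasawa measure $\delta^{-1}(a)\,da$ on $N\backslash G/K\cong A$, then yields
\[
 |(R_{X_M}W_\mu)(a)|\prec \delta^{1/2}(a)\cdot d(\mu)^{O_M(1)}\cdot|c(1,\mu)|,
\]
the factor $\delta^{1/2}(a)$ being precisely what the $\delta^{-1}(a)$-weighted $L^2$ norm puts back on pointwise values.

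\textbf{Step 3: Combination.} Multiplying the identity of step 1 by the bound of step 2 and dividing by $c(1,\mu)$ produces
\[
 \left|\frac{W_\mu(a)}{c(1,\mu)}\right|\prec \delta^{1/2}(a)\cdot d(\mu)^{O_M(1)}\prod_{j=1}^{n-1}(a_{j+1}/a_j)^{M_j}.
\]
The same argument with $M=0$ gives the trivial bound $\prec \delta^{1/2}(a)\,d(\mu)^{O(1)}$, and taking the coordinatewise minimum of the two replaces each $(a_{j+1}/a_j)^{M_j}$ by $\min(1,a_{j+1}/a_j)^{M_j}$, as required.

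\textbf{Main obstacle.} The main technical obstacle is step 2: while the differentiation identity of step 1 is clean and the Stade formula is already available, one needs a Sobolev embedding calibrated to the natural measure on $N\backslash G/K$ that produces \emph{exactly} the weight $\delta^{1/2}(a)$ and makes the polynomial dependence on $d(\mu)$ transparent. In particular, the additional right-derivatives $D$ needed for Sobolev embedding must be controlled uniformly in $\mu$, which is done via the PBW basis together with the identity of step 1 itself (used in reverse to trade derivatives $E_{j,j+1}$ for the scalars $2\pi i\,a_j/a_{j+1}$ on the torus) and the eigenvalue formula of Lemma \ref{diffop} for the Casimir.
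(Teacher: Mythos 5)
Your proposal is essentially correct, but takes a genuinely different and more self-contained route than the paper. The paper deduces Lemma~\ref{rapiddecay} by specializing the more general Lemma~\ref{whittaker-bound}, whose proof is a downward recursion on $n$: the simple-root-derivative trick (the element $Y$ with $d\pi(Y)\V(a)=(a_{n-1}/a_n)\V(a)$, which is your $E_{n-1,n}$ up to $2\pi i$) kills only the last direction, and the remaining directions are treated by expanding via the Whittaker--Plancherel formula on $\GL_{n-1}$ and invoking the $\GL_2$ base case from Michel--Venkatesh. You instead apply the derivative trick simultaneously in all $n-1$ directions, and replace the recursion by a direct weighted Sobolev embedding on $N\backslash G\cong A\times K$; the $\delta^{1/2}(a)$ weight emerges from the rescaling $\tilde F=\delta^{-1/2}F$, and the $d(\mu)^{O_M(1)}|c(1,\mu)|$ comes from Stade's formula plus the Casimir eigenvalue (Lemma~\ref{diffop}(4)) and the standard Nelson--Stinespring-type control of $\|\pi(D)W_\mu\|$ by $\|\D^{\deg D}W_\mu\|$. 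What your route buys is avoiding both the inductive structure and the outside reference for the base case. Two points that need sharpening before this would be a complete proof: first, $R_{X_M}W_\mu$ is \emph{not} right $K$-invariant (the $E_{j,j+1}$ do not normalize $K$), so the Sobolev inequality must be carried out on all of $A\times K$, with the coordinate vector fields $\partial_{H_j}$ re-expressed as $R_{\Ad(k^{-1})E_{jj}}$ (compactness of $K$ keeps the resulting linear combinations of $R_{X_i}$ under control); second, the ``coordinatewise minimum'' in Step~3 is not a valid operation on products of bounds — instead one should apply the Step~1 identity only in the directions $j$ with $a_{j+1}/a_j\le 1$, i.e.\ use the monomial $X_{M\mathbf{1}_S}$ with $S:=\{j:a_{j+1}\le a_j\}$ and let the implied constant absorb the maximum over the $2^{n-1}$ subsets. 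Both gaps are readily patched and do not affect the viability of the argument.
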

\begin{proof}
This result is already proved in a similar form in \cite[Theorem $1$]{BHM}. However, as we are happy with a polynomial dependency on $\mu$ we can infer from more general result in Lemma \ref{whittaker-bound}.
\end{proof}

\begin{lemma}\label{boundW}
Let $\mu\in \C^n$ such that $\Re(\mu_i)$ are non-negative, distinct, and small enough (say $<1/100$). Then for any $k\in \Z^n_{\ge 0}$
$$W_{\mu+k}(a)\prec c(1,-\sigma(\mu+k))\delta^{1/2}(a)\prod_{i=1}^n a_i^{\Re((\sigma(\mu+k))_i},$$
where $\sigma\in S_n$ such that $\Re(\mu_{\sigma(1)}+k_{\sigma(1)})\le\dots\le\Re(\mu_{\sigma(n)}+k_{\sigma(n)})$.
\end{lemma}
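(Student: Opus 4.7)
The plan is to use the Weyl-invariance of the spherical Whittaker function in its parameter to reduce to a $\nu$ lying in the region of convergence of the Jacquet integral \eqref{defnwhittaker}, then bound by moving absolute values inside and invoking the Gindikin--Karpelevich formula.

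First I would verify that the real numbers $\Re(\mu_1)+k_1,\dots,\Re(\mu_n)+k_n$ are pairwise distinct, a combinatorial consequence of the hypotheses: any coincidence would give $k_j-k_i=\Re(\mu_i-\mu_j)\in\Z\cap(-\tfrac{1}{50},\tfrac{1}{50})=\{0\}$, forcing $k_i=k_j$ and $\Re(\mu_i)=\Re(\mu_j)$, contradicting the distinctness of $\Re(\mu)$. Writing $\lambda:=\sigma(\mu+k)$, we therefore have the strict inequalities $\Re(\lambda_1)<\cdots<\Re(\lambda_n)$. Let $\nu:=w_0\lambda$ with $w_0\in S_n$ the order-reversing permutation. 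By the Weyl-invariance of $W_\bullet(a)$ in its parameter \cite{J3}, $W_{\mu+k}(a)=W_\lambda(a)=W_\nu(a)$, and since $\Re(\nu_i-\nu_{i+1})>0$ the Jacquet integral \eqref{defnwhittaker} defining $W_\nu$ converges absolutely.

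Next I would move absolute values inside the Jacquet integral. Using $|I_\nu(g)|=I_{\Re(\nu)}(g)$,
$$|W_\nu(a)|\le |c(1,\nu)|\,d_n\int_N I_{\Re(\nu)}(wna)\,dn.$$
The right-hand integral is the standard intertwining operator applied to the spherical vector of $I_{\Re(\nu)}$; evaluated at $a$, the Gindikin--Karpelevich formula identifies it with $c_0(\Re(\nu))\,\delta^{1/2}(a)\prod_i a_i^{\Re(\nu)_{n+1-i}}$, where $c_0(s):=\prod_{i<j}\Gamma_\R(s_i-s_j)/\Gamma_\R(1+s_i-s_j)$. Substituting $\Re(\nu)_{n+1-i}=\Re(\lambda_i)$ and using the elementary reindexing identity $|c(1,\nu)|=|c(1,-\lambda)|=\prod_{i<j}|\Gamma_\R(1+\lambda_j-\lambda_i)|$, I obtain
$$|W_{\mu+k}(a)|\ll |c(1,-\lambda)|\,c_0(\Re(\nu))\,\delta^{1/2}(a)\prod_i a_i^{\Re(\lambda_i)}.$$

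The main obstacle, and the last step, is to verify $c_0(\Re(\nu))\prec 1$. The asymptotics $\Gamma_\R(z)/\Gamma_\R(1+z)\asymp z^{-1/2}$ as $z\to\infty$ and $\asymp z^{-1}$ as $z\to 0^+$ show that each factor decays (polynomially in $k$) when the corresponding gap $\Re(\lambda_j)-\Re(\lambda_i)$ is large, and is bounded when the gap is bounded away from zero. A near-coincidence can only occur when two $k$-components agree, in which case the gap reduces to a difference of components of $\Re(\mu)$, itself bounded below by the distinctness hypothesis on $\Re(\mu)$. Hence $c_0(\Re(\nu))$ is bounded uniformly in $k$ by a constant depending only on $\mu$, which $\prec$ absorbs; combined with the preceding display, this gives the asserted bound.
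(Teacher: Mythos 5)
Your proof is correct and follows essentially the same strategy as the paper's: reduce via Weyl-invariance to the real-dominant parameter $\nu$, use the uniform-in-$k$ lower bound on the gaps $\Re(\nu_i-\nu_{i+1})$ to land in the region of absolute convergence of the Jacquet integral, and move absolute values inside. The only cosmetic difference is that you evaluate the resulting positive integral exactly via the Gindikin--Karpelevich formula and bound $c_0(\Re(\nu))$ from its explicit gamma-factor expression, whereas the paper first extracts the $a$-dependence by a change of variable and then bounds $\int_N|I_{\mu'}(wn)|\,dn\ll\int_N I_\epsilon(wn)\,dn$ by appealing to the convergence proof in \cite[Chapter 5.8]{G}; both routes give the same bound, uniformly in $k$ and in $\Im(\mu)$.
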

\begin{proof}
Let $\mu':=\mu+k$. Using the Weyl group invariance of $W_{\mu'}$ we may assume, without loss of generality, that $\Re(\mu'_1)\ge\dots\ge\Re(\mu'_n)$. Also note that the assumption on the real parts of $\mu$ forces the above ordering to be strict. In fact, there is an $\epsilon>0$ such that $\min\{\Re(\mu'_i-\mu'_j)\mid i<j\}\ge\epsilon$. Now we do a change of variable int the integral of \eqref{defnwhittaker} to see that
\begin{align*}
    W_{\mu'}(a)
    &=d_nc(1,\mu')\delta^{1/2}(a)\prod_{i=1}^na_i^{(w\mu')_i}\int_{N}I_{\mu'}(wn)\psi(-ana^{-1})dn\\
    &\prec |c(1,\mu')|\delta^{1/2}(a)\prod_{i=1}^sa_i^{\Re((w\mu')_i)}.
\end{align*}
The last integral is absolutely convergent, as $\min\{\Re(\mu'_i-\mu'_j)\mid i<j\}\ge\epsilon$ (see \cite{J3}). Moreover, we can bound the last integral
uniformly in $\mu'$. A proof of this is essentially done in the proof of the absolute convergence of the Jacquet's integral in \cite[Chapter $5.8$]{G}. From that proof it can be inductively seen that
$$\int_{N}|I_{\mu'}(wn)|dn\ll \int_{N}I_{{\epsilon}}(wn)dn\ll_\epsilon 1,$$
where $I_{{\epsilon}}$ is the spherical section in the principal series with real parameters $\nu$ such that $\min\{(\nu_i-\nu_j)\mid i<j\}\ge\epsilon$.
\end{proof}

We record a rapid decay estimate of the Whittaker transform of the test function $f$, as follows, which we will need later in the proof.
\begin{lemma}\label{boundinnerproduct}
Let $\mu\in\C^n$ with $\Re(\mu_i)\ge 0$ and $\sum_{i=1}^n\Re(\mu_i)^2\le R$ for all $i$ and for some $R\ge 0$. 
Let $p$ be a fixed sufficiently large natural number.
Then
for each fixed
$f \in C_c^\infty(N \backslash G, \psi)$,
$$\langle f, W_\mu\rangle:=\int_{N \backslash G}f(g)\overline{W_\mu(g)}dg\ll_{R,p}d(\mu)^{-p}|c(1,\Im(\mu))|.$$
\end{lemma}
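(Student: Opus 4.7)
The plan is to integrate by parts using the fact that $W_\mu$ is an eigenfunction of the positive elliptic operator $\mathcal{D}$ from \eqref{defdiffop}, with eigenvalue of size $d(\mu)$. Observe first that $\mathcal{D}$, acting by right translation, commutes with left translation by $N$ and is a differential operator, so it preserves $C_c^\infty(N\backslash G,\psi)$; moreover, since $G$ and $N$ are unimodular, $\mathcal{D}$ is formally self-adjoint on $L^2(N\backslash G,\psi)$. By Lemma \ref{diffop}(3)--(4), $\mathcal{D} W_\mu = \lambda_\mu W_\mu$ with $\lambda_\mu \asymp 1 + \|\mu\|^2$, and the hypothesis $\sum_j \Re(\mu_j)^2\le R$ yields $\lambda_\mu \asymp_R d(\mu)$.

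Iterating self-adjointness $p$ times, we would obtain
\[
\langle f, W_\mu\rangle = \lambda_\mu^{-p}\,\langle \mathcal{D}^p f, W_\mu\rangle,
\]
with $\mathcal{D}^p f \in C_c^\infty(N\backslash G,\psi)$ supported in a compact set depending only on $f$, and with $\|\mathcal{D}^p f\|_\infty$ bounded in terms of $f$ and $p$. It remains to bound $|\langle \mathcal{D}^p f, W_\mu\rangle|$ trivially, which reduces (using the Iwasawa decomposition and right-$K$-invariance of $W_\mu$) to pointwise bounding $|W_\mu(a)|$ for $a$ in a fixed compact subset of $A$.

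For the pointwise bound, I would apply Lemma \ref{boundW} with $k=0$ (perturbing the real parts of $\mu$ slightly if they are not distinct, then passing to the limit by continuity of $W_\mu$ in $\mu$, or alternatively invoking Lemma \ref{rapiddecay} in the purely imaginary case, where the factor $\prod_j\min(1,a_{j+1}/a_j)^{M_j}$ is bounded below on the compact set). This gives $|W_\mu(a)|\ll_R |c(1,-\sigma\mu)|$ on the compact set, where $\sigma\in S_n$ is the reordering permutation from Lemma \ref{boundW}. Stirling's approximation applied to each $\Gamma_\R$-factor, using that $\Re(\mu_i-\mu_j)$ is bounded by $2\sqrt R$, yields $|c(1,-\sigma\mu)|\asymp_R |c(1,\Im\mu)|$. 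Absorbing any residual factor of $d(\mu)^{O(\epsilon)}$ coming from the $\prec$-notation by enlarging $p$ slightly, we arrive at $\langle f, W_\mu\rangle \ll_{R,p} d(\mu)^{-p}|c(1,\Im\mu)|$.

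The main obstacle I anticipate is the boundary case $\Re(\mu)=0$: Lemma \ref{boundW} requires strictly distinct real parts, so one must either perturb and pass to the limit, or equivalently deduce the bound from Lemma \ref{rapiddecay} (which is stated precisely for purely imaginary $\mu$). The verification that $|c(1,-\sigma\mu)|\asymp_R|c(1,\Im\mu)|$ uses only the fact that a bounded horizontal shift of a $\Gamma$-function changes its modulus by a bounded multiplicative factor, uniformly in the imaginary part, and is routine; the integration-by-parts identity and preservation of the relevant function spaces under $\mathcal{D}$ are also standard once one works in Iwasawa coordinates.
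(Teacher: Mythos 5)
Your proof follows the same integration-by-parts framework as the paper, but takes a different route for the remaining estimate. After the IBP, the paper writes $\langle \D^p f, W_\mu\rangle$ in the coordinates $h = z\diag(h',1)k$, applies Cauchy--Schwarz on the $A_{n-1}$-integral, and invokes Stade's formula $\|W_\mu\|^2 = |c(1,\mu)|^2$ (i.e.\ reduces to an $L^2$-estimate on $W_\mu$), whereas you bound the remaining integral trivially using the compactness of the support of $f$ and a \emph{pointwise} bound on $W_\mu$ from Lemma \ref{boundW}. Both finish with the same Stirling computation relating $c(1,\cdot)$ at $\mu$ to $c(1,\Im\mu)$. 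This is a genuine alternative for the middle step; the pointwise route is arguably more elementary, while the $L^2$-route is in closed form once one admits Stade's formula.

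Two points need repair in your version. First, you should run the integration-by-parts with $\D_R$, not $\D=\D_0$. The eigenvalue of $\D_0$ on $W_\mu$ is $1 + T - \sum_j\mu_j^2$, whose real part equals $1 + T + \sum_j|\Im\mu_j|^2 - \sum_j\Re(\mu_j)^2$. When $R>1+T$ this can vanish (e.g.\ $\Im\mu = 0$, $\sum\Re(\mu_j)^2 = 1+T$), so the claim ``$\lambda_\mu\asymp_R d(\mu)$'' is false for $\D_0$; $\lambda_\mu^{-p}$ then yields nothing. The paper's proof uses $\D_R$ precisely to force the real part of the eigenvalue to be $\geq 1+T+\sum_j|\Im\mu_j|^2\asymp d(\mu)$. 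Second, Lemma \ref{boundW} with $k=0$ requires $\Re(\mu_i)<1/100$, which is not implied by $\sum\Re(\mu_i)^2\leq R$ when $R$ is large; and the relevant application (Case II of Proposition \ref{heart}, $R=nN+1$, contour at $\Re(\mu_i)\approx 2N$) needs exactly the large-$R$ regime. Taking $k\neq 0$ in Lemma \ref{boundW} only covers $\mu$ whose real parts are small perturbations of non-negative integers, which is again not what the hypothesis of Lemma \ref{boundinnerproduct} gives you. To make the pointwise route airtight you would need a version of the Jacquet-integral bound that is uniform over $\mu$ with bounded (but not necessarily small or near-integral) real parts, e.g.\ by repeating the convergence argument in the proof of Lemma \ref{boundW} directly; as written, there is a gap. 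The paper's $L^2$-route sidesteps this particular issue.
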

\begin{proof}
Recall from Lemma \ref{diffop} that $\D_R=1+R-C_G+2C_K$. As $W_\mu$ is right $K$-invariant, $C_KW_\mu=0$. Thus 
$$\D_RW_\mu=(1+R+T-\sum_{j=1}^n\mu_j^2)W_\mu.$$
We check that
$$|1+R+T-\sum_{j=1}^n\mu_j^2|\ge T+\sum_{j=1}^n|\Im(\mu_j)|^2\asymp d(\mu).$$
Let $Z$ denote the center of $G$;
we identify it with $\mathbb{R}^\times$ in the usual way.
Integrating by parts with respect to $\D^p$, we obtain
\begin{align*}
    \langle f, W_\mu\rangle
    &=(1+R+T-\sum_{j=1}^n\bar{\mu}_j^2)^{-p}\int_{N\backslash G}\overline{W_\mu(h)}\D^p f(h) d h.
\end{align*}
We use coordinates $h:=z\diag(h',1)k$ for $N\backslash G$ where $h'$ lies in the diagonal subgroup $A_{n-1}$ of $\GL_{n-1}(\R)$, $z\in \R^\times$ and $k\in K$. Correspondingly, we write $dh=d^\times z\frac{dh'}{\delta(h')|\det(h')|}dk.$ We rewrite the last integral as
$$\int_{A_{n-1}}W_\mu\left[\begin{pmatrix}h'&\\&1\end{pmatrix}\right]\int_K\int_{Z}\D^pf\left[z\begin{pmatrix}h'&\\&1\end{pmatrix}k\right]|z|^{\sum\bar{\mu}_i}|\det(h')|^{-1} d^\times zdk\frac{dh'}{\delta(h')}.$$
We apply Cauchy--Schwarz on the $h'$ integral and use \eqref{L2whittaker} and that $f\in C_c^\infty$ to obtain that the last integral is
$$\ll_{p,R,f} d(\mu)^{-p}\|W_\mu\|^2\asymp d(\mu)^{-p}\prod_{i,j}\Gamma_\R(1+\mu_i+\overline{\mu_j}).$$
We use Stirling's estimate to obtain that
\begin{align*}
    &\prod_{i,j}\Gamma_\R(1+\mu_i+\overline{\mu_j})
    \asymp_R\prod_{i\neq j}\Gamma_\R(1+\Im(\mu_i)-\Im(\mu_j)+\Re(\mu_i)+\Re(\mu_j))\\
    &\asymp_R\prod_{i\neq j}|\Im(\mu_i)-\Im(\mu_j)|^{R'}\Gamma_\R(1+\Im(\mu_i)-\Im(\mu_j))
    \ll_{R,p}d(\mu)^{R'}|c(1,\Im(\mu))|^2,
\end{align*}
where $R'$ is a bounded constant depending on $R$ and $n$. Making $p$ sufficiently large we conclude the proof.
\end{proof}

\section{Reduction of the proof
of the main results}\label{sec:reduction}
We adopt the standard convention from analytic number
theory of writing $\epsilon$ (or $\tau$) for a small positive fixed quantity,
whose precise value we allow to change from one line to the
    next.

Let $\Omega \subseteq \GL_{n}(\mathbb{R})$
be the bounded neighborhood of the identity element and $\iota$ be as in Theorem \ref{existence-strong}. Recall $\psi$ from \eqref{add-char-of-n}. 
We first construct $\We\in \W(\Pi,\tilde{\psi})$
using the theory of the Kirillov model.
We denote by $C_c^\infty(N \backslash G, \psi)$
the space of smooth functions $f : G \rightarrow \mathbb{C}$ satisfying
$f(n g) = \psi(n) f(g)$ for all $n \in N$
and for which the support of $f$
has compact image
in $N \backslash G$.
We choose an element $f \in C_c^\infty(N \backslash G, \psi)$
with the following properties:
\begin{itemize}
    \item 
$f$ is right $K$-invariant
\item
$f(h)\geq \iota$
for all $h \in \Omega$
\item
$\int_{N \backslash G} |f|^2\,dg = 1$
\end{itemize}
We now define $V$
by requiring that
\begin{equation}\label{newvec}
    \We\left[\begin{pmatrix}g&\\&1\end{pmatrix}\right]:=f(g),
\end{equation}

We remark that the sphericality assumption of $f$ is not essential. We refer to the discussion in \S\ref{sphericality-unnecessary} for details.

\begin{prop}\label{mainprop}
Let $\Pi$ be as in Theorem \ref{existence-strong}.
For every $\delta>0$ and $h$ in a fixed bounded neighbourhood around the identity in $G$ there exists a $\tau>0$ such that 
$$\left|\We\left[\begin{pmatrix}h&\\c/C(\Pi)&1\end{pmatrix}\right]-\We\left[\begin{pmatrix}h&\\&1\end{pmatrix}\right]\right|<\delta,$$
for $c\in \R^n$ with $|c|<\tau$.
Here $\We$ is the vector chosen in \eqref{newvec}.
\end{prop}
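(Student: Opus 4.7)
The plan is to follow the strategy outlined in \S\ref{sec:sketch-pf-thm-2}. First I reduce to a statement about Kirillov-model invariance: writing $c' := c/C(\Pi)$ and $u(c') := \begin{pmatrix} I_n & \\ c' & 1 \end{pmatrix}$, direct matrix multiplication gives
\begin{equation*}
\begin{pmatrix} h & \\ c/C(\Pi) & 1 \end{pmatrix} = \begin{pmatrix} h & \\ & 1 \end{pmatrix} u(c'),
\end{equation*}
so the right-translation action on the Whittaker model yields $V\!\left[\begin{pmatrix} h & \\ c/C(\Pi) & 1 \end{pmatrix}\right] = [\Pi(u(c'))V]\!\left[\begin{pmatrix} h & \\ & 1 \end{pmatrix}\right]$. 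The task becomes showing that the Kirillov-model restrictions of $\Pi(u(c'))V$ and $V$ differ pointwise by $O(\delta)$, uniformly for $h$ in the given bounded neighborhood and $|c| < \tau$.

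Next, I apply the Whittaker--Plancherel formula \eqref{genwhitplan} to spectrally expand both Kirillov-side functions on $\GL_n(\R)$. The Whittaker coefficient of $[\Pi(u(c'))V][\diag(\cdot,1)]$ against $W_\pi$ is the Rankin--Selberg zeta integral $\int_{N\backslash G} V[\diag(g,1) u(c')]\overline{W_\pi(g)}\,dg$. I then use the $\GL_{n+1}\times\GL_n$ local functional equation \eqref{lfe} to transfer this to the ``Weyl-translated side'', where it involves $V[\diag(1,g) w u(c')]$ and a $\gamma$-factor. The key algebraic step is the Bruhat-type factorization $\diag(1,g) w u(c') = \nu \cdot \diag(1,g) w$ with $\nu \in N_{n+1}$ (obtained by conjugating $u(c')$ through $w$); combined with the $\tilde\psi$-equivariance of $V$, this extracts a phase factor $e(\varphi_c(g))$ where $\varphi_c$ is a linear form in $c$ involving entries of $g^{-1}$, analogous to the $e(-c/t)$ phase in the $n = 1$ analysis of \S\ref{sec:sketch-pf-thm-2}.

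After rescaling $g \mapsto g/C(\Pi)$, which absorbs the conductor dependence into the normalized gamma factor $\Theta(\mu, \Pi)$, the difference $[\Pi(u(c'))V - V][\diag(h,1)]$ becomes (schematically)
\begin{equation*}
\int_{(0)^n} W_\mu(h)\, \Theta(\mu, \Pi)\, \langle f, W_\mu\rangle \int_A \bigl(e(\varphi_c(a)) - 1\bigr) V\!\left[\begin{pmatrix} C(\Pi) & \\ & a \end{pmatrix} w\right] a^\mu \delta^{1/2}(a) \, d^\times a \; \frac{d\mu}{|c(\mu)|^2},
\end{equation*}
cf.\ \eqref{sketch-main-integral}. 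The outer $\mu$-integral converges absolutely thanks to Lemma \ref{rapiddecay} (polynomial bound on $W_\mu(h)$ for $h$ in a compact set), Lemma \ref{boundinnerproduct} (rapid decay of $\langle f, W_\mu\rangle$ in $\mu$), and Lemma \ref{boundgammafactor} (control of $\Theta(\mu, \Pi)$ uniformly in $\Pi$).

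The hard part is the inner $a$-integral: I need to show that the integrand is effectively supported on $|a| \asymp 1$, where $|e(\varphi_c(a)) - 1| \ll |c|$. This localization will follow from the rapid decay bound
\begin{equation*}
V\!\left[\begin{pmatrix} C(\Pi) & \\ & a \end{pmatrix} w\right] \ll_N \delta^{1/2}(a)\, a_1^N
\end{equation*}
as $a_1 \to 0$ together with its Weyl-symmetric analogues, which is the content of Proposition \ref{heart} to be established in \S\ref{sec:proof-heart}; the complementary regime where some $a_j$ is large is controlled by the rapid decay of $W_\mu$ in Lemma \ref{rapiddecay}. The principal obstacle is thus Proposition \ref{heart} itself, whose proof requires the intricate ``$M$-Whittaker'' decomposition of the spherical Whittaker function and the contour-shift analysis (including the $\pop$ case split) sketched in \S\ref{sketch-n-ge-2}.
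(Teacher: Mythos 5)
Your reduction and general outline match the paper's: apply the Whittaker--Plancherel formula in the $\GL_n$-Kirillov variable, transfer via the $\GL_{n+1}\times\GL_n$ local functional equation \eqref{lfe}, extract the phase from the $\tilde\psi$-equivariance of $\We$, rescale by $C(\Pi)$, and invoke Proposition \ref{heart} plus conductor bounds to close the argument. However, there is a genuine gap in the middle of your proposal.

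Although you cite \eqref{genwhitplan}, your schematic expansion only involves the spherical Whittaker function $W_\mu$ and the inner product $\langle f, W_\mu\rangle$, as if the function being expanded were right-$K_n$-invariant. It is not: the translated vector $\Pi(u(c'))\We$ is no longer spherical, because the lower-unipotent $u(c')$ does not commute with $K_n$. (Equivalently, after the local functional equation the phase $e(c w' h^{-1} e_1)$ destroys right-$K_n$-invariance of the integrand.) So the expansion \eqref{spherwhitplan} does not apply; one must use the full Whittaker--Plancherel decomposition \eqref{genwhitplan}, which carries a sum over an entire orthonormal basis $\B(\pi)$ of general (non-spherical) $K$-types. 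The formula \eqref{sketch-main-integral}, which you cite, is used in the proof of Proposition \ref{heart}, where the relevant function $h\mapsto\We[\diag(C(\Pi),h)w]$ \emph{is} spherical; it is not the expansion underlying Proposition \ref{mainprop}.

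Once non-spherical $\V\in\B(\pi)$ enter, your convergence argument breaks down: Lemma \ref{boundinnerproduct} controls $\langle f, W_\mu\rangle$ and Lemma \ref{rapiddecay} controls $W_\mu$, but neither addresses the sum over $\V$ or the $\hat G$-integral of the non-spherical contributions. The paper instead integrates by parts repeatedly against the elliptic operator $\D$ (Lemma \ref{brutaldiff}), transferring derivatives from the phase times $\We[\diag(C(\Pi),\cdot)w]$ to $\V(hw')$. This produces the Sobolev weight $S_{p-L}(\V)$ needed for convergence of the basis sum via Lemma \ref{sobolev-norm-property}; and uniform bounds on $\V(ak)$ for arbitrary smooth vectors $\V$ in a tempered $\pi$ come from Lemma \ref{whittaker-bound}, not Lemma \ref{rapiddecay}. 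Without this integration-by-parts and Sobolev machinery, the $\mu_p$-integral together with the $\B(\pi)$-sum cannot be shown to converge, and the argument does not close.
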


We will now assume Proposition \ref{mainprop} and prove Theorems \ref{existence-weak} and \ref{existence-strong}. In the next two sections we will prove Proposition \ref{mainprop}. In the following Lemma we first prove a weaker version of Theorem \ref{existence-strong}.

\begin{lemma}\label{main-reduction}
For every $\delta>0$ and $h$ in a fixed bounded neighbourhood around the identity in $G$ there exists a $\tau>0$ such that 
\begin{itemize}
    \item the normalization $\|\We\|_{\W(\Pi,\tilde{\psi})}=1$,
    with the norm taken in the Kirillov model
    (\S\ref{sec:whittaker-kirillov}),
    \item the lower bound
    $\We\left[\begin{pmatrix}h&\\&1\end{pmatrix}\right] \geq \iota$ for all $h \in \Omega$,
    and
    \item 
    for all $g\in K_0(C(\Pi),\tau)$, 
 and for $h \in \Omega$,
\begin{equation*}
\left|\We\left[\begin{pmatrix}h&\\&1\end{pmatrix}g\right]-
\omega_{\Pi}(d_g)
\We\left[\begin{pmatrix}h&\\&1\end{pmatrix}\right]\right|<\delta.
\end{equation*}
\end{itemize}
Here $\We$ is the vector chosen in \eqref{newvec} and $\omega_\Pi$ and $d_g$ are as in Theorem \ref{existence-weak}.
\end{lemma}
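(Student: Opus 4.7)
The first two bullets are immediate from the construction \eqref{newvec}: by the theory of the Kirillov model (\S\ref{sec:whittaker-kirillov}), the Kirillov-model norm of $V$ equals $\int_{N \backslash G} |f(g)|^2 \, dg = 1$ by the chosen normalization of $f$, and $V\left[\begin{pmatrix} h & \\ & 1 \end{pmatrix}\right] = f(h) \geq \iota$ for $h \in \Omega$ by construction. The substantive content is the third bullet, and my plan is to reduce it to Proposition \ref{mainprop} by a direct factorization of $g \in K_0(C(\Pi),\tau)$ inside $\GL_{n+1}(\R)$.

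Write $g = \begin{pmatrix} a & b \\ c & d \end{pmatrix}$ in $n+1$ block form, so that $|a - 1_n|, |b|, |d-1| < \tau$ and $|c| < \tau/C(\Pi)$. I factor
\[
g = \begin{pmatrix} a - bc/d & b \\ 0 & d \end{pmatrix}\begin{pmatrix} 1 & 0 \\ c/d & 1 \end{pmatrix} =: g_1 g_2.
\]
The right factor $g_2$ has lower-left entry of norm at most $\tau'/C(\Pi)$ for some $\tau'$ arbitrarily close to $\tau$ (using $|d-1| < \tau$), while the upper-left block $a' := a - bc/d$ of $g_1$ satisfies $|a' - 1_n| = O(\tau)$. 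Left-multiplying by $\begin{pmatrix} h & \\ & 1 \end{pmatrix}$, pulling out the scalar $d \cdot 1_{n+1}$ (which acts by $\omega_\Pi(d)$), and using the $\tilde{\psi}$-equivariance of the Whittaker model to strip off the upper-unipotent factor containing $hb/d$, I obtain
\[
V\left[\begin{pmatrix} h & \\ & 1 \end{pmatrix} g\right] = \omega_\Pi(d)\, \tilde{\psi}\!\left(\begin{pmatrix} 1 & hb/d \\ 0 & 1 \end{pmatrix}\right) V\left[\begin{pmatrix} h a'/d & 0 \\ c/d & 1 \end{pmatrix}\right].
\]

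The character value equals $e\bigl((hb/d)_n\bigr) = 1 + O(\tau)$ uniformly for $h$ in the bounded neighborhood, and $h a'/d$ lies within $O(\tau)$ of $h$ and hence, shrinking $\tau$ if needed, in the neighborhood allowed by Proposition \ref{mainprop}. That proposition, applied with $c' := C(\Pi)\, c/d$ (which has $|c'| = O(\tau)$), gives
\[
V\left[\begin{pmatrix} h a'/d & 0 \\ c/d & 1 \end{pmatrix}\right] = V\left[\begin{pmatrix} h a'/d & 0 \\ 0 & 1 \end{pmatrix}\right] + O(\delta/3) = f(h a'/d) + O(\delta/3)
\]
for $\tau$ small enough, and uniform continuity of $f$ on compact sets gives $f(h a'/d) = f(h) + O(\delta/3)$. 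Combining these three approximations, using $|\omega_\Pi(d)| = 1$, and shrinking $\tau$ so that each error is at most $\delta/3$ yields the desired inequality.

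All of the difficulty has been displaced into Proposition \ref{mainprop}; the present argument is essentially bookkeeping. The factorization is effective because $K_0(C(\Pi),\tau)$ differs from an ordinary $\tau$-neighborhood of the identity only in the lower-triangular unipotent direction, where the stringent condition $|c| < \tau/C(\Pi)$ forces a genuine use of the analytic conductor---exactly the content of Proposition \ref{mainprop}.
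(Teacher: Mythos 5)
Your proof is correct and follows essentially the same route as the paper: an Iwahori-type decomposition of $g\in K_0(C(\Pi),\tau)$ into a scalar, an upper-unipotent factor (removed via $\tilde\psi$-equivariance), a perturbation of $\diag(h,1)$ handled by continuity of $f$, and a lower-unipotent piece handled by Proposition \ref{mainprop}. The paper isolates the lower-unipotent step as a separately stated claim before verifying it, whereas you run the three approximations inline, but the decomposition and each approximation step coincide with the paper's.
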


\begin{proof}
We choose $V$ as in \eqref{newvec}.
We note that first two requirements of $V$ are automatically satisfied by the choice \eqref{newvec}. To prove the invariance property of $V$ we claim the following. 

For every $\delta>0$ and $h$ in a given fixed bounded set there exists $\tau>0$ such that for all $\begin{pmatrix}g'&\\c&1\end{pmatrix}\in K_0(C(\Pi),\tau)$,
$$\left|\We\left[\begin{pmatrix}hg'&\\c&1\end{pmatrix}\right]-\We\left[\begin{pmatrix}h&\\&1\end{pmatrix}\right]\right|<\delta.$$

This claim is sufficient. To see that, first we note the following Iwahori-type decomposition that for $d\neq 0$
$$\GL_{n+1}(\R)\ni\begin{pmatrix}A&b\\c&d\end{pmatrix}=d\begin{pmatrix}1_n&b/d\\&1\end{pmatrix}\begin{pmatrix}A/d-bc/d^2\\&1\end{pmatrix}\begin{pmatrix}1_n&\\c/d&1\end{pmatrix}.$$
Hence we can assume that any $g\in K_0(C(\Pi),\tau)$ is of the form 
$$g=d_g\begin{pmatrix}1_n&b\\&1\end{pmatrix}\begin{pmatrix}g'\\c&1\end{pmatrix},$$ with $g'\in G$ such that $\|g'-1\|\ll \tau$ and $|b|, |d-1|, C(\Pi)|c|<\tau$. Therefore,
$$\We\left[\begin{pmatrix}h&\\&1\end{pmatrix}g\right]=\omega_\Pi(d_g)\tilde{\psi}\left[\begin{pmatrix}1_n&hb\\&1\end{pmatrix}\right]\We\left[\begin{pmatrix}hg'\\c&1\end{pmatrix}\right].$$
Therefore using unitarity of $\omega_\Pi$ and $\tilde{\psi}$,
\begin{align*}
&\left\lvert
\We\left[\begin{pmatrix}h&\\&1\end{pmatrix}g\right]-\omega_\Pi(d_g)\We\left[\begin{pmatrix}h&\\&1\end{pmatrix}\right]
\right\rvert\\
&=
\left\lvert
(e(hb.e_n)-1)\We\left[\begin{pmatrix}hg'\\c&1\end{pmatrix}\right]
    +\We\left[\begin{pmatrix}hg'\\c&1\end{pmatrix}\right]-\We\left[\begin{pmatrix}h&\\&1\end{pmatrix}\right]
\right\rvert.
\end{align*}
Now using the claim above and the fact that $$\We\left[\begin{pmatrix}hg'\\c&1\end{pmatrix}\right]\ll 1,$$
which also follows from the same claim, we obtain the required invariance property by making $b$ small enough.

Now we turn to prove the claim. We note that from \eqref{newvec}, for $h$ in a compact set in $G$ there exists $\tau$ small enough with $\|g'-1\|<\tau$ such that $$\left|\We\left[\begin{pmatrix}hg'\\&1\end{pmatrix}\right]-\We\left[\begin{pmatrix}h&\\&1\end{pmatrix}\right]\right|<\delta.$$
Now applying a triangle inequality in the following
\begin{align*}
    &\We\left[\begin{pmatrix}hg'\\c&1\end{pmatrix}\right]-\We\left[\begin{pmatrix}h&\\&1\end{pmatrix}\right]\\
    &=\We\left[\begin{pmatrix}hg'\\c&1\end{pmatrix}\right]-\We\left[\begin{pmatrix}hg'\\&1\end{pmatrix}\right]+\We\left[\begin{pmatrix}hg'\\&1\end{pmatrix}\right]-\We\left[\begin{pmatrix}h&\\&1\end{pmatrix}\right],
\end{align*}
along with Proposition \ref{mainprop} we prove the claim.
\end{proof}

\begin{proof}[Proofs of Theorem \ref{existence-weak} and Theorem \ref{existence-strong} assuming Proposition \ref{mainprop}]
First of all we can use a similar technique as in Lemma \ref{main-reduction} and reduce to the case $g\in K_1(C(\Pi),\tau)$ using the unitarity of $\omega_\Pi$, as follows,
$$|\Pi(g)\We-\omega_\Pi(d_g)\We|= |\Pi(g/d_g)\We-\We|.$$

Set $\delta_0 := \min(\delta,1) \iota/2$ where $\delta$ and $\iota$ are as in the statement of Theorem \ref{existence-strong}.
Let $\We_0 \in \mathcal{W}(\Pi,\tilde{\psi})$
and $\tau_0 > 0$
be as in \eqref{newvec}, so that 
\[\We_0\left[\begin{pmatrix}h&\\&1\end{pmatrix}\right]\geq \iota\]
and for all $g \in K_1(C(\Pi), \tau_0), h \in \Omega$,
\[\left|\We_0\left[\begin{pmatrix}h&\\&1\end{pmatrix}g\right] -  \We_0\left[\begin{pmatrix}h&\\&1\end{pmatrix}\right]\right| \leq \delta_0.\]
Existence of such $\We_0$ follows from Lemma \ref{main-reduction} and the toy theorem \ref{toy-theorem}.

Let $\xi$ be the $L^1$-normalized
characteristic function of $K_1(C(\Pi),\tau_1)$.
Note that there exists $\tau_2>0$ such that for $g\in K_1(C(\Pi),\tau_2)$,
\[\|g \ast \xi - \omega_{\Pi}(d_g)\xi \|_{L^1} \leq \| g\ast \xi -\xi\|+|\omega_\pi(d_g)-1|\le \delta_0,\]
which follows from Lemma \ref{folner-lemma} and the toy theorem \ref{toy-theorem}.
Here $g \ast \xi(h) := \xi(g^{-1} h)$,
so that
$\Pi(g) \Pi(\xi) = \Pi(g \ast \xi)$.
Set $\We_1 := \Pi(\xi) \We_0$.
Since $\|\We_0\| = 1$,
we then have by the triangle inequality
that for $g \in K_1(C(\Pi), \tau_1)$,
\[\|\Pi(g) V_1 - \omega_\Pi(d_g)V_1\|
\leq \|g \ast \xi - \omega_\Pi(d_g)\xi\|_{L^1(G)} \leq \delta_0,\]
and for $\tau_1$ sufficiently small in terms of $\tau_0$
\begin{multline*}
    \left|\We_1\left[\begin{pmatrix}h&\\&1\end{pmatrix}g\right]-\We_1\left[\begin{pmatrix}h&\\&1\end{pmatrix}\right]\right|\\
    \le \int_{K_1(C(\Pi),\tau_1)} \xi(t) \left|\We_0\left[\begin{pmatrix}h&\\&1\end{pmatrix}gt\right] - \We_0\left[\begin{pmatrix}h&\\&1\end{pmatrix}t\right]\right|\, dg\leq 2\delta_0.
\end{multline*}
Also,
for $h \in \Omega$,
we
have \[\We_1\left[\begin{pmatrix}h&\\&1\end{pmatrix}\right] - \We_0\left[\begin{pmatrix}h&\\&1\end{pmatrix}\right] =
\int_{K_1(C(\Pi),\tau_0)} \xi(t) \left(\We_0\left[\begin{pmatrix}h&\\&1\end{pmatrix}t\right] - \We_0\left[\begin{pmatrix}h&\\&1\end{pmatrix}\right]\right)\, dt,\]
hence
\[\left|\We_1\left[\begin{pmatrix}h&\\&1\end{pmatrix}\right]-\We_0\left[\begin{pmatrix}h&\\&1\end{pmatrix}\right]\right| \leq \iota/2,\]
so in particular \[\We_1\left[\begin{pmatrix}h&\\&1\end{pmatrix}\right] \geq \iota - \iota/2 = \iota/2 > 0,\]
thus $\|\We_1\|\asymp 1$.
It follows that the vector
$V_1/\|V_1\|\in \W(\Pi,\tilde{\psi})$ and its image in $\Pi$ satisfy the required conclusions of Theorem \ref{existence-strong} and Theorem \ref{existence-weak}, respectively.
\end{proof}

\section{Proof of Proposition \ref{mainprop}}\label{sec:proof-mainprop}
To prove Proposition \ref{mainprop} we need an uniform bound of an Weyl element shifted newvector, as in Proposition \ref{heart}. Proving this proposition is actually the heart and the most difficult part of the article. 

\begin{prop}\label{heart}
Let $l\in\N$. Let $g=ak$ with $a:=\diag(a_1,\dots,a_n)\in A$, $k\in K$. Then
$$\D^l\We\left[\begin{pmatrix}C(\Pi)&\\&g\end{pmatrix}w\right]\prec_{l,N}\delta^{1/2}(a)\min(a_1^N,1),$$ where $w$ is the long Weyl element in $\GL_{n+1}(\R)$ and $\D$ is the differential operator defined in \eqref{defdiffop}.
\end{prop}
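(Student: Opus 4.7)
The plan is to follow the strategy of \S\ref{sketch-n-ge-2}. First, I would expand the function $h \mapsto \We\left[\begin{pmatrix}C(\Pi)&\\&h\end{pmatrix}w\right]$ on $\GL_n(\R)$ using the Whittaker--Plancherel formula \eqref{genwhitplan}. Computing each Plancherel coefficient via the $\GL(n+1) \times \GL(n)$ local functional equation \eqref{lfe} reduces it to a zeta integral $\int f(h) \overline{W(h)}|\det h|^s\,dh$, which vanishes for $W$ in any non-spherical $K$-type because $f$ is spherical; only the spherical $\pi_\mu$ contribute. Since $W_\mu$ is right $K$-invariant, the $K$-component in $g = ak$ drops out. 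Pulling the normalization $C(\Pi)^{-\sum \mu_i}$ out of the resulting $\gamma$-factor yields
\begin{equation*}
\We\left[\begin{pmatrix}C(\Pi)&\\&g\end{pmatrix}w\right] = \int_{(0)^n} W_\mu(a)\,\Theta(\mu,\Pi)\,\overline{\langle f, W_\mu\rangle}\,\frac{d\mu}{|c(\mu)|^2}.
\end{equation*}
To incorporate $\D^l$, I would use the relation $\D = 1 - C_G + 2C_K$ from Lemma \ref{diffop} and track its action through the $K$-isotypic content of the integrand; this inserts a polynomial-in-$\mu$ factor of degree at most $2l$, which is absorbed by the rapid decay in Lemma \ref{boundinnerproduct} upon taking the exponent $p$ large.

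I would then split into two cases according to Definition \ref{POP}. If $a$ does not satisfy the partial ordering property $\pop(s)$ for any $s$, then some consecutive ratio $a_{j+1}/a_j$ is bounded from below quantitatively; the rapid decay of $W_\mu(a)/c(1,\mu)$ in this ratio from Lemma \ref{rapiddecay}, combined with $\Theta(\mu,\Pi) \prec 1$ from \eqref{boundG} along the unshifted contour and convergence of the Plancherel integral, gives the bound $\prec \delta^{1/2}(a)\min(a_1^N,1)$ directly. This case is essentially the content of Lemma \ref{notpop}.

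If $a$ satisfies $\pop(s)$ for some $1 \le s \le n-1$, a naive contour shift of $\mu_1$ to $\Re(\mu_1) = N$ would produce the required factor $a_1^N$, but $W_\mu(a)$ admits a Weyl-symmetric expansion as a sum of $M$-Whittaker functions $M_{w\mu}(a)$ each growing exponentially in every positive root $a_i/a_j$, $i < j$; when one such root is unbounded under $\pop(s)$, the shifted integral diverges. The resolution is to partially decompose $W_\mu(a)/c(1,\mu)$ into $|S_n/W_s|$ summands of the form $\delta^{1/2}(a)\, a^{w\mu}\, M_{w\mu}(a)$, corresponding to the standard Levi $\GL_s \times \GL_1 \times \dotsb \times \GL_1$ whose Weyl group is $W_s \cong S_s$; each partial $M$-function is then only exponential in the ``cross-roots'' $a_i/a_j$ with $1 \le i \le s < j \le n$, which are bounded under $\pop(s)$. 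For each summand, I would shift the contours of the $s$ coordinates of $w\mu$ corresponding to the $\GL_s$-block past $\Re = N$; by the $\theta$-temperedness of $\Pi$ no poles of $\Theta(\mu,\Pi)$ are crossed, and absolute convergence along the shifted contour follows from the polynomial growth bound \eqref{boundG}, the size estimate of Lemma \ref{boundW}, and the rapid decay of Lemma \ref{boundinnerproduct}. The factor $a^{w\mu}$ then contributes $a_1^N \cdots a_s^N \prec a_1^N$ under $\pop(s)$, giving the claim. The main obstacle is the construction and analytic control of this partial decomposition with uniform polynomial dependence on $\mu$: the full decomposition ($s = 1$) is classical, appearing in Hashizume \cite{H} and Goodman--Wallach \cite{GW} via differential-equation methods, but the partial version attached to the non-minimal Levi $\GL_s \times \GL_1 \times \dotsb \times \GL_1$ seems to require either an inductive spectral argument through Whittaker--Plancherel inversion in only the last $n-s$ variables, or a tailored analysis of Jacquet's integral \eqref{defnwhittaker} restricted to a suitable coset subspace.
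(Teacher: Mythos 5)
Your proposal follows the paper's proof of Proposition~\ref{heart} essentially step for step: Whittaker--Plancherel expansion plus the $\GL(n+1)\times\GL(n)$ functional equation to obtain the spherical integral representation \eqref{finalreducespherical}, the $\pop(s)$ dichotomy with the non-$\pop$ case handled by the rapid-decay Lemma~\ref{notpop}, and in the $\pop(s)$ case the partial decomposition into $M$-Whittaker functions (Lemma~\ref{partialdecomposition}, proved exactly by the inductive spectral/Stade-formula argument you suggest) followed by a contour shift of $\mu_1,\dots,\mu_s$ controlled via Lemma~\ref{boundofM}. One small inaccuracy worth flagging: failing all $\pop(s)$ does \emph{not} reduce to a single consecutive ratio $a_{j+1}/a_j$ being bounded below; the actual proof of Lemma~\ref{notpop} requires a more intricate nested chain of indices $l,r,l_1,r_1,\dots$ before the rapid decay of $W_\mu$ can be exploited.
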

We will prove the above proposition in the next section. For now we will see how to derive Proposition \ref{mainprop} from Proposition \ref{heart}.

\begin{lemma}\label{brutaldiff}
Let $p\in \N$ and $\D$ as in \eqref{defdiffop}. Then
\begin{enumerate}
    \item For $\sigma\in \C$ small we have $\D^p|\det(g)|^\sigma\asymp_p|\det(g)|^\sigma$
    \item As a function of $h$ we have $\D^p(e(cw'h^{-1}e_1)-1)\ll_p \sum_{r=1}^{2p+1}|c|^r|h^{-1}e_1|^r$.
\end{enumerate}
\end{lemma}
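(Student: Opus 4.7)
The plan is to treat the two parts separately, both exploiting that $\D = 1 - \sum_{i=1}^{n^2} X_i^2$ is a polynomial in the left-invariant vector fields $X_i$ on $G$. For (1), the function $g \mapsto |\det(g)|^\sigma$ is a (possibly complex) character of $G$, and a direct calculation using $|\det(g e^{tX})|^\sigma = |\det g|^\sigma \exp(\sigma t \operatorname{tr} X)$ shows that any left-invariant $X \in \g$ acts on it by the scalar $\sigma (\operatorname{tr} X)$. Hence $\D |\det|^\sigma = \lambda_\sigma |\det|^\sigma$ with $\lambda_\sigma := 1 - \sigma^2 \sum_i (\operatorname{tr} X_i)^2$, and iterating gives $\D^p |\det|^\sigma = \lambda_\sigma^p |\det|^\sigma$. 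For $|\sigma|$ small enough that $|\lambda_\sigma|$ is bounded above and away from $0$, one concludes $|\lambda_\sigma^p| \asymp_p 1$, which is what is claimed.

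For (2), set $F(h) := e(c w' h^{-1} e_1)$ and note that $\D \cdot 1 = 1$, so $\D^p(F-1) = \D^p F - 1$. The central step is to prove by induction on $p$ the structural identity
\[
\D^p F = P_p(c, h) \cdot F,
\]
where $P_p(c,h)$ is a polynomial whose monomials are finite products of scalar factors of the form $c\, w'\, M\, h^{-1} e_1$ for fixed matrices $M$ depending on the basis $\{X_i\}$, and each monomial of $P_p(c,h) - 1$ has total $c$-degree between $1$ and $2p$. The key input is that, viewing $v(h) := h^{-1} e_1$ as a vector-valued function, one has $(Xv)(h) = -X h^{-1} e_1$ (treating $X$ on the right as a matrix), so that $X F = -2\pi i (c w' X h^{-1} e_1) F$ and $X$ applied to any scalar factor $c w' M h^{-1} e_1$ produces $-c w' M X h^{-1} e_1$. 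The structural identity is then a routine consequence of the Leibniz rule.

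Each scalar factor satisfies $|c w' M h^{-1} e_1| \ll_p |c| \cdot |h^{-1} e_1|$, and since each monomial of $P_p - 1$ has at most $2p$ factors, we obtain $|P_p - 1| \ll_p \sum_{r=1}^{2p} |c|^r |h^{-1} e_1|^r$; separately, $|F - 1| \leq 2\pi |c w' h^{-1} e_1| \ll |c| \cdot |h^{-1} e_1|$. Writing
\[
\D^p(F-1) = \D^p F - 1 = P_p (F - 1) + (P_p - 1)
\]
and combining the two bounds yields $|\D^p(F - 1)| \ll_p \sum_{r=1}^{2p+1} |c|^r |h^{-1} e_1|^r$, as required. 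The main (minor) obstacle is the combinatorial bookkeeping for the induction establishing the structural identity for $\D^p F$; it is otherwise a routine differentiation.
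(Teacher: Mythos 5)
Your proof of (1) matches the paper's. For (2), however, you take a genuinely different (and arguably cleaner) route. The paper expands $F - 1 = e(cw'h^{-1}e_1)-1 = \sum_{r\geq 1} (xh^{-1}e_1)^r/r!$ (with $x:=2\pi i\,cw'$) as a power series, establishes a three-term recursion for $\D f_r$ in terms of $f_r, f_{r-1}, f_{r-2}$ (plus an auxiliary eigenvalue computation for $\D(|x|^2|h^{-1}e_1|^2)$), and then inducts on $p$ and sums over $r$. You instead avoid the series entirely by proving the closed-form structural identity $\D^p F = P_p F$, where $P_p$ is a polynomial whose monomials are products of at most $2p$ scalar factors $cw'Mh^{-1}e_1$, and then splitting $\D^pF-1 = P_p(F-1) + (P_p-1)$. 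Your approach buys: no need to worry about term-by-term differentiation of an infinite series or re-summing the recursion, a cleaner invariant ($P_p$) carrying the estimate, and the two final bounds decouple. The paper's approach is more in the spirit of a Bessel-function recursion (and the footnote suggesting the $n=1$ case as intuition confirms this), but the stated recursion as printed actually has a sign typo ($\D f_r = f_r - (xh^{-1}e_1)f_{r-1} + |x|^2|h^{-1}e_1|^2 f_{r-2}$ is what the computation gives, not the $-$ the paper prints); your argument sidesteps this bookkeeping entirely. Both routes are elementary and deliver the identical bound $\ll_p \sum_{r=1}^{2p+1}|c|^r|h^{-1}e_1|^r$.
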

\begin{proof}
Recall the definition of $\D$.
It is straightforward to check that 
$$\D^p|\det(g)|^\sigma=(1-n\sigma^2)^p|\det(g)|^\sigma,$$
which proves $(1)$.

Let $x:=2\pi i cw'$. So
$$e(cw'h^{-1}e_1)-1=\sum_{r=1}^\infty\frac{(xh^{-1}e_1)^r}{r!}.$$
Let $f_r:=\frac{(xh^{-1}e_1)^r}{r!}$. Then it is straightforward to check that\footnote{The computation will be intuitive after one does the similar computation in the $n=1$ case, in which case $\D$ becomes the simpler differential operator $1-(h\partial_h)^2$.}
$$\D f_r = f_r-(xh^{-1}e_1)f_{r-1}-|x|^2|h^{-1}e_1|^2f_{r-2}.$$
One can also easily check that
$$\D (|x|^2|h^{-1}e_1|^2)=-2n|x|^2|h^{-1}e_1|^2
.$$
Therefore inducting on $p$ and summing over $r$ we conclude $(2)$.
\end{proof}

\begin{lemma}\label{whittaker-bound}
Let $\pi$ be an irreducible generic unitary tempered representation of $G$ and $\V\in \W(\pi,\psi)$ be any smooth vector. Let $a=\diag(a_1,\dots,a_n)\in A$, and $k\in K$ and $L>0$.
For any $\eta$ small enough and $N$ large enough
$$\V(ak)\prec_{N} \delta^{1/2-\eta}(a)\prod_{i=1}^{n-1}\min(1,(a_{i+1}/a_{i})^{N})S_{p}(\V),$$
for $p$ depends on $N$ and the group $G$ only.
\end{lemma}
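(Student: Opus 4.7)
The plan is to combine two standard techniques: an integration-by-parts identity using the $\psi$-equivariance of $\V$ along the simple root subgroups of $N$, which extracts the rapid decay factor $\prod_{i=1}^{n-1}\min(1,(a_{i+1}/a_i)^N)$, and a Sobolev-type pointwise bound for smooth vectors in tempered Whittaker models, which supplies the factor $\delta^{1/2-\eta}(a)\,S_p(\V)$.

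For the first step, fix $i$ with $a_{i+1}/a_i \leq 1$ and let $X_i := E_{i,i+1} \in \mathrm{Lie}(N)$, normalized so that $d\psi(X_i) = 2\pi i$. The relation $\V(\exp(tX_i)g) = e(t)\V(g)$, together with the conjugation identity $\exp(tX_i)g = g\exp(t\,\Ad(g^{-1})X_i)$ and the weight calculation $\Ad(a^{-1})X_i = (a_{i+1}/a_i)X_i$, yields at $g = ak$
\begin{equation*}
  2\pi i\, \V(ak) \;=\; \frac{a_{i+1}}{a_i}\,R_{\Ad(k^{-1})X_i}\V(ak),
\end{equation*}
where $R_Y F(g) := \frac{d}{dt}\big|_{t=0} F(g\exp(tY))$. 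Since $R_Y\V$ again lies in $\W(\pi,\psi)$, we may iterate this identity $N$ times, and expand $(\Ad(k^{-1})X_i)^N = \Ad(k^{-1})(X_i^N) \in U(\mathrm{Lie}(G))$ in a fixed PBW basis with coefficients that are smooth functions of $k \in K$, uniformly bounded by compactness of $K$. This produces
\[
|\V(ak)| \;\ll_N\; (a_{i+1}/a_i)^N \max_{Z \in \B_{i,N}}|R_Z \V(ak)|
\]
for some fixed finite $\B_{i,N} \subset U^N(\mathrm{Lie}(G))$. Iterating for every $i$ with $a_{i+1}/a_i \leq 1$ (and using the trivial bound $\min(1,(a_{i+1}/a_i)^N)=1$ otherwise), we arrive at
\[
|\V(ak)| \;\ll_N\; \left(\prod_{i=1}^{n-1}\min\!\bigl(1,(a_{i+1}/a_i)^N\bigr)\right) \max_{Z' \in \B_N}|R_{Z'}\V(ak)|,
\]
with $\B_N$ a fixed finite subset of $U^{(n-1)N}(\mathrm{Lie}(G))$.

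For the second step, I invoke the standard Sobolev pointwise bound: for every small $\eta > 0$ there exists $q = q(\eta, G)$ such that for every generic irreducible tempered unitary representation $\pi$ of $G$ and every smooth $\V' \in \W(\pi,\psi)$,
\[
|\V'(ak)| \;\leq\; \delta^{1/2-\eta}(a)\,S_q(\V'), \qquad a \in A,\ k \in K.
\]
This follows from a local Sobolev embedding applied to the $L^2$-inner product on $N\backslash G$ decomposed via $dg = \delta^{-1}(a)\,dn\,da\,dk$: the Jacobian contributes the $\delta^{1/2}$ factor, the $\delta^{-\eta}$-loss accommodates a slight shrinking of the ball in $A$ required to keep $N$-fibres essentially disjoint for unbalanced $a$, and elliptic regularity via the Casimir relates the local $L^2$-mass to $S_q(\V')$ (cf.~\cite[\S2]{MV}). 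Applying this estimate with $\V' = R_{Z'}\V$ and using $S_q(R_{Z'}\V) \ll S_{q+(n-1)N}(\V)$ combines with step one to yield the claimed inequality with $p := q + (n-1)N$.

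The main delicate point is uniformity: one must ensure that the Sobolev constants do not depend on $\pi$ or on $k \in K$. Uniformity in $k$ is immediate from the compactness of $K$ and the smoothness of $\Ad$, while uniformity in $\pi$ is built into the definition of $S_q$ as an intrinsic norm on $\pi$ depending only on the universal enveloping algebra of $\mathrm{Lie}(G)$. The tiny polynomial factor $\prod_i (a_i+a_i^{-1})^\epsilon$ hidden in the $\prec$-notation cleanly absorbs any secondary losses of this type.
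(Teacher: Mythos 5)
Your first step---iterating the root-vector identity at each simple root $X_i$ to extract the factor $\prod_i\min\bigl(1,(a_{i+1}/a_i)^N\bigr)$ uniformly in $k\in K$---is sound, and is in fact a mild streamlining of what the paper does (the paper peels off only the last simple root and then inducts on $n$). The gap is in step 2. The ``standard Sobolev pointwise bound'' $|\V'(ak)|\ll\delta^{1/2-\eta}(a)S_q(\V')$ that you invoke is precisely the lemma at $N=0$, so you have moved all the difficulty into that black box, and the justification you give for it is incorrect. The unitary structure on $\W(\pi,\psi)$ for $\pi$ a representation of $G=\GL_n(\R)$ is the Kirillov-model pairing $\int_{N_{n-1}\backslash\GL_{n-1}(\R)}\V_1(\diag(g,1))\overline{\V_2(\diag(g,1))}\,dg$, \emph{not} the integral $\int_{N\backslash G}|\V|^2\,dg$; the latter diverges whenever $\pi$ is tempered (for $n=1$ it is $\int_{\R^\times}d^\times y=\infty$, and for $n=2$ it is $\int_0^\infty|\V(\diag(y,1))|^2\,y^{-1}\,d^\times y$, which blows up at $y\to 0$ since $|\V(\diag(y,1))|^2\sim y\log^2(1/y)$ for $\pi$ tempered spherical). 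Running the Sobolev embedding against the correct $L^2$-structure produces the Jacobian $\delta_{\GL_{n-1}}^{-1}(h)$ for $h=\diag(a_1/a_n,\dots,a_{n-1}/a_n)$, hence at best the bound $|\V'(\diag(h,1))|\ll\delta_{\GL_{n-1}}^{1/2}(h)S_q(\V')$. Since $\delta_{\GL_n}(a)=\delta_{\GL_{n-1}}(h)\cdot|\det h|$, the lemma also requires a factor $|\det h|^{1/2-\eta}$, which decays along (for instance) $a=\diag(y,\dots,y,1)$ with $y\to 0$, where $\delta_{\GL_{n-1}}(h)\equiv 1$; no Sobolev inequality on the Kirillov $L^2$-space can see that decay, because the Kirillov norm is insensitive to the weight $|\det h|$. (Your explanation of the $\eta$-loss via ``$N$-fibres'' is also not the actual mechanism: the loss absorbs the logarithmic degeneration of Whittaker asymptotics when Langlands parameters collide, visible already in the $K_0$-Bessel behaviour for $\GL_2$.)

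The paper produces the missing $|\det h|^{1/2-\eta}$ factor by a different route: it inducts on $n$ (base case $n=2$ from \cite[Proposition~3.2.3]{MV}), expands $|\det a|^{-\sigma}\V_1(a)$ via the Whittaker--Plancherel formula for $\GL_{n-1}$ twisted by $|\det|^{-\sigma}$ with $\sigma=1/2-\eta$---this twist is exactly what manufactures the $|\det|^{1/2-\eta}$ factor---and then integrates by parts against $\D'$ and invokes \cite[Proposition~3.5]{J1} together with the inductive hypothesis to make the resulting $\pi'$-integral converge. Something of that flavour, or an appeal to the asymptotic expansion of Kirillov vectors near the boundary $\det h\to 0$ as in \cite{J1}, has to replace your step 2.
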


This lemma is essentially in \cite[Proposition $3.5$]{J1} but $\delta^{-\eta}(a)$ is replaced by $(1+\log\|a\|)^M$ which would be enough for our purpose. We give a proof of this lemma along the lines of \cite[Proposition $3.2.3$]{MV} which can be generalized for non-tempered $\pi$.

\begin{proof}
As $\V(ak)=\pi(k)\V(a)$ and $S_p(\pi(k)\V)\asymp_K S_p(\V)$ (see e.g. \cite[S1b]{MV}) it is enough to prove the Lemma for $k=1$. 

We will prove this inducting on $n$. Note that for $n=2$ this is proved in \cite[Proposition $3.2.3$]{MV}. We will generalize their idea of proof for general $n$. 

We note that there exists an $Y\in\g$ such that
$$d\pi(Y)\V(a) = (a_{n-1}/a_n)\V(a).$$
Let $W_1:=d\pi(Y^N)W\in\W(\pi,\psi)$. It is enough to show that
$$\V_1(a)\ll_{N,\eta} \delta^{1/2-\eta}(a)\prod_{i=1}^{n-2}\min(1,(a_{i+1}/a_i)^N)S_{p}(\V),$$
for some $p$ depending on $N$.

For a generic irreducible unitary tempered representation $\pi'$ of $\GL_{n-1}(\R)$ let $\B(\pi'):=\{\V'\}$ be an orthonormal basis of $\pi'$ consisting of eigenvectors of $\D'$ by diagonalizing it such that $\D'\V'=\lambda_{\V'}\V'$, where $\D'$ is the analogous differential operator on $\GL_{n-1}(\R)$ as in \eqref{defdiffop}. We obtain using Whittaker--Plancherel formula \eqref{genwhitplan} on $\GL(n-1)$
\begin{multline*}
    |\det(a)|^{-\sigma}\V_1(a)=\omega_\pi(a_n)\V_1(a/a_n)|\det(a)|^{-\sigma}\\
    =\omega_\pi(a_n)\int_{\widehat{\GL_{n-1}}}\sum_{\V'\in \B(\pi')}\V'(\tilde{a}/a_n)Z_{\V_1,\sigma}(\V')d\mu_p(\pi'),
\end{multline*}
where $\tilde{a}:=\diag(a_1,\dots,a_{n-1})$, and 
$$Z_{\V_1,\sigma}(\V'):=\int_{N_{n-1}\backslash\GL_{n-1}}\V_1\left[\begin{pmatrix}h&\\&1\end{pmatrix}\right]\overline{\V'(h)}|\det(h)|^{-\sigma}dh.$$
We integrate by parts $M$ times in the above integral with respect to $\D'$ to write
$$Z_{\V_1,\sigma}(\V')=\lambda_{\V'}^{-M}\int_{N_{n-1}\backslash\GL_{n-1}}{\D'}^M\left(\V_1\left[\begin{pmatrix}h&\\&1\end{pmatrix}\right]|\det(h)|^{-\sigma}\right)\overline{\V'(h)}dh.$$
We use Lemma \ref{brutaldiff} to evaluate $\D'|\det|^{-\sigma}$ and write the $h$-integral in the Iwasawa $a'k'$ coordinates. We appeal to \cite[Proposition $3.5$]{J1}, which states that that for any $\V$ in a tempered representation one has
$$\V(a)\prec_{N} \delta^{1/2}(a)\prod_{i=1}^{n-1}\min(1,(a_{i+1}/a_{i})^{N})S_{p'}(\V),$$
for some $p'$ depending only on $N$, and apply to $\V'$ and  ${\D'}^M\V_1$ which lie in tempered representations $\pi'$ and $\pi$, respectively.
Noting
$$\delta\left[\begin{pmatrix}a&\\&1\end{pmatrix}\right]=|\det(a)|\delta(a)$$
we conclude that
\begin{multline*}
Z_{\V_1,\sigma}(W')\ll S_{p}(\V_1)\lambda_{\V'}^{p'-M}\int_{A_{n=1}}(\|a'\|+\|{a'}^{-1}\|)^{\epsilon}\min(1,{a'}_{n-1}^{-N})\\
\prod_{i=1}^{n-2}\min(1,(a'_{i+1}/a'_{i})^N)|\det(a')|^{1/2-\sigma}da'.
\end{multline*}
The above integral is absolutely convergent for $\sigma<1/2$. To see that
note that 
$$\int_{\R_{>0}}{a'}_1^{1/2-\sigma\pm\epsilon}\min(1,(a'_1/a'_2)^{-N})d^\times a'_1\ll {a'}_2^{1/2-\sigma\pm\epsilon},$$
for $\epsilon$ sufficiently small. Proceeding like this for $i=1,2,\dots$ we end up in estimating
$$\int_{\R_{>0}}\min(1,{a'}_{n-1}^{-N}){a'}_{n-1}^{(n-1)(1/2-\sigma\pm\epsilon)}d^\times a'_{n-1}.$$
This is again convergent for $\epsilon$ small enough and thus we have
$$Z_{\V_1,\sigma}(\V')\ll S_{p}(\V_1)\lambda_{\V'}^{p'-M},$$
for some $p$ depending on $M$ and $N$.
Using induction hypothesis on $\V'$ we get
$$\V'(\tilde{a}/a_n)\ll_{\eta,N} \delta^{1/2-\eta}(\tilde{a})\prod_{i=1}^{n-2}(1,(a_{i+1}/a_i)^N)\lambda_{\V'}^q,$$
for some $q$ depending on $N$. 
We choose $\sigma=1/2-\eta$ for $\eta>0$ arbitrarily small and note that $$|\det(\tilde{a}/a_n)|^{1/2-\eta}\delta^{1/2-\eta}(\tilde{a})=\delta^{1/2-\eta}(a).$$ Finally, we apply Lemma \ref{sobolev-norm-property} to see that the sum over $\V'$ and the integral over $\pi'$ are convergent if $M$ is large enough, and we conclude the proof.
\end{proof}

\begin{proof}[Proof of Proposition \ref{mainprop} assuming Proposition \ref{heart}]
Recall that $\Pi$ is $\theta$-tempered as in Theorem \ref{existence-strong}. For every $\pi\in\hat{G}$ we choose an orthonormal basis $\B(\pi):=\{\V\}$ of $\pi$ consisting of eigenvectors of $\D$ by diagonalizing it. Applying the Whittaker--Plancherel formula \eqref{genwhitplan} of $G$ we get, for $0<\sigma<1/2-\theta$, that
\begin{align*}
    &|\det(g)|^{-\sigma} \We\left[\begin{pmatrix}g&\\c&1\end{pmatrix}\right]=|\det(g)|^{-\sigma}\Pi\left[\begin{pmatrix}1_n&\\c&1\end{pmatrix}\right]\We\left[\begin{pmatrix}g&\\&1\end{pmatrix}\right]\\
    &=\int_{\hat{G}}\sum_{\V\in\B(\pi)}\V(g)\int_{N\backslash G}\Pi\left[\begin{pmatrix}1_n&\\c&1\end{pmatrix}\right]\We\left[\begin{pmatrix}h&\\&1\end{pmatrix}\right]\overline{\V(h)}|\det(h)|^{-\sigma} dhd\mu_p(\pi).
\end{align*}
The inner integral can be checked to be absolutely convergent by Lemma \ref{whittaker-bound}.
We apply the $\GL(n+1)\times\GL(n)$ local functional equation \eqref{lfe} and the $N$-equivariance for $\We$ to obtain the inner integral above equals to
$$\omega_{\bar{\pi}}(-1)^n\gamma(1/2-\sigma,\Pi\otimes\bar{\pi})^{-1}\int_{N\backslash G}e(cw'h^{-1}e_1)\We\left[\begin{pmatrix}1&\\&h\end{pmatrix}w\right]\overline{\V(hw')}|\det(h)|^{-\sigma} dh.$$
Changing variable $h\mapsto C(\Pi)^{-1}h$ in the latter integral we conclude that
\begin{equation}\label{finalform}
\begin{split}
    &\We\left[\begin{pmatrix}g&\\c/C(\Pi)&1\end{pmatrix}\right]- \We\left[\begin{pmatrix}g&\\&1\end{pmatrix}\right]\\
    &=\int_{\hat{G}}\omega_{\bar{\pi}}((-1)^{n}C(\Pi)^{-1})C(\Pi)^{n\sigma}\gamma(1/2-\sigma,\Pi\otimes\bar{\pi})^{-1}\sum_{\V\in\B(\pi)}\V(g)|\det(g)|^{\sigma} \\
    &\hphantom{rubbish}\int_{N\backslash G}(e(cw'h^{-1}e_1)-1)\We\left[\begin{pmatrix}C(\Pi)&\\&h\end{pmatrix}w\right]\overline{\V(hw')}|\det(h)|^{-\sigma} dhd\mu_p(\pi).
\end{split}
\end{equation}
From Lemma \ref{boundgammafactor} we have
$$C(\Pi)^{n\sigma}\gamma(1/2-\sigma,\Pi\otimes\bar{\pi})^{-1}\ll C(\pi)^{(n+1)\sigma}.$$
In the last integral of \eqref{finalform} we integrate by parts by $\D$ as
$$\int_{N\backslash G}\D^L\left((e(cw'h^{-1}e_1)-1)\We\left[\begin{pmatrix}C(\Pi)&\\&h\end{pmatrix}w\right]|\det(h)|^{-\sigma}\right)\D^{-L}\overline{\V(hw')} dh.$$
We write $N\backslash G$ with $(a,k)$ coordinates. Using Lemma \ref{brutaldiff}, Proposition \ref{heart}, and Lemma \ref{whittaker-bound} we estimate the last integral by
$$\prec S_{p-L}(\V)\sum_{1\le r\ll L}|c|^r\int_{A} a_1^{-r}\prod_{i=1}^{n-1}\min(1,(a_{i+1}/a_i)^M)\min(1,a_1^N)\frac{da}{|\det(a)|^\sigma},$$
where $p$ depends on $M$ and $n$. Working as in the proof of Lemma \ref{whittaker-bound} we check that the above $A$-integral is absolutely convergent and hence \eqref{finalform} is bounded by
$$\ll_{p,L}|c|\int_{\hat{G}}C(\pi)^{(n+1)\sigma}\sum_{\V\in \B(\pi)}|\V(g)||\det(g)|^\sigma S_{p-L}(\V)d\mu_p(\pi).$$
As $g$ varies in a compact set $\Omega$, we use Lemma \ref{whittaker-bound} to bound 
$$\V(g)|\det(g)|^\sigma\ll_\Omega S_q(\V),$$
for all $\V\in \B(\pi)$, for some fixed $q$. Finally, making $L$ sufficiently large and appealing to Lemma \ref{sobolev-norm} we conclude.
\end{proof}

\section{Proof of Proposition \ref{heart}}\label{sec:proof-heart}
For $1\le s\le n$ we define a property $\pop(s)$, which stands for \emph{Partial Ordering at the Pivot $s$}, of the elements $a\in A$.
\begin{defn}\label{POP}
    We say an element $a\in A$ satisfies the property $\pop(s)$ for some $1\le s\le n$ if $$\max\{a_1,\dots,a_s\}\le \min\{1,\min\{a_{s+1},\dots,a_n\}\}.$$
\end{defn}
The proof of Proposition \ref{heart} will be proved in two different cases, depending whether $a$ satisfies $\pop(s)$ for some $s$ or not. When $a$ does not satisfy $\pop(s)$ for any $s$ the proof would be relatively easier which can be seen at the end of this section. Here we prove the required bound of $W_\mu$, which is the spherical vector in $\pi_\mu$ as defined in \eqref{defnwhittaker}, on the elements $a$ which do not satisfy $\pop(s)$ for any $s$. 

\begin{lemma}\label{notpop}
Let $a\in A$ does not satisfy $\pop(s)$ for any $1\le s\le n$ then 
$$\frac{W_\mu(a)}{c(1,\mu)}\prec_N \delta^{1/2}(a)\min(1,a_1^N)d(\mu)^{O_N(1)},$$
for $N$ large enough.
\end{lemma}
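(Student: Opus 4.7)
The plan is to combine Lemma \ref{rapiddecay} with a combinatorial bound on the downward ratios of the sequence $(a_1,\dotsc,a_n)$. If $a_1 \ge 1$, then $\min(1,a_1^N)=1$ and the conclusion follows from Lemma \ref{rapiddecay} applied with all exponents $M_j=0$. So assume $a_1<1$. Applying Lemma \ref{rapiddecay} with $M_j=N$ for every $1 \le j \le n-1$ yields
\[
\frac{W_\mu(a)}{c(1,\mu)} \prec_N \delta^{1/2}(a)\, d(\mu)^{O_N(1)} \Bigl(\prod_{j=1}^{n-1}\min(1,a_{j+1}/a_j)\Bigr)^N,
\]
so the lemma reduces to the combinatorial inequality $\prod_{j=1}^{n-1}\min(1,a_{j+1}/a_j) \le a_1$ under the hypothesis that $a$ does not satisfy $\pop(s)$ for any $s$.

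Setting $S := \{j : a_{j+1} < a_j\}$ and $b_j := \log a_j$, the bound amounts to $T := \sum_{j \in S}(b_j - b_{j+1}) \ge -b_1$. The key observation is that for any pair $i<j$ with $a_i>a_j$, telescoping gives $T \ge b_i - b_j$. Let $s^* := \min\{s : a_s > 1\}$, which exists by the failure of $\pop(n)$ and is at least $2$ since $a_1<1$. If some $k > s^*$ has $a_k < a_1$, then the inversion $(s^*,k)$ satisfies $a_{s^*}/a_k > 1/a_1$, yielding $T \ge b_{s^*} - b_k > -b_1$ directly. Otherwise all $a_k \ge a_1$ for $k > s^*$; in that case, the failure of $\pop(1)$ produces some $\tilde{j} \in [2,s^*-1]$ with $a_{\tilde{j}} < a_1$, and the failure of $\pop(s^*-1)$ gives some $k_2 > s^*$ with $a_{k_2} \in [a_1,1)$. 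The required bound then comes from combining the downward contributions on $[1,\tilde{j}]$ (which is at least $b_1-b_{\tilde{j}}$) and on $[s^*,n]$ (which is at least $b_{s^*} - b_{k_2}$), together with structural constraints furnished by the failures of $\pop(s)$ for intermediate~$s$.

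The hard part will be completing the second sub-case rigorously. In limiting configurations where the various $\pop(s)$ are nearly satisfied (e.g.\ $a = (0.5,0.5002,0.4999,1.0001,0.5001)$), each segmental contribution falls just short of $-b_1$, so the argument must combine several inversions tightly. I expect to resolve this either by induction on the dimension $n$, removing a coordinate whose removal preserves the non-$\pop$ condition and appealing to the inductive hypothesis, or by an iterative refinement that uses the failures of $\pop(s^*-1),\pop(s^*-2),\dotsc$ to locate successive ``intermediate peaks'' in $[2,s^*-1]$, each paired with a valley so as to contribute to $T$ until the total downward variation crosses the threshold $-b_1$. Once $\prod_{j \in S}(a_{j+1}/a_j) \le a_1$ is established, raising to the $N$-th power converts the display above into the stated conclusion, with $a_1^N = \min(1,a_1^N)$ since $a_1<1$.
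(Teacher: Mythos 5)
Your reduction is correct: applying Lemma~\ref{rapiddecay} with $M_j = N$ for all $j$ and the observation that each factor $\min(1,a_{j+1}/a_j)$ is $\le 1$ reduces the lemma to the combinatorial inequality $\prod_{j=1}^{n-1}\min(1,a_{j+1}/a_j)\le a_1$ under failure of every $\pop(s)$. Your easy sub-case (there is $k>s^*$ with $a_k<a_1$) is exactly the paper's easy case (there $l>r$, with $r$ playing the role of your $s^*$), and the single inversion $(s^*,k)$ does the job.

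However, the proof is not complete, and you say so yourself: the second sub-case, where $a_k\ge a_1$ for all $k>s^*$, is disposed of only with a plan (``I expect to resolve this either by induction\dots or by an iterative refinement\dots''), not an argument. That sub-case is precisely where the real work of the lemma lies. A single combination of the two disjoint inversions you name, namely $(1,\tilde{j})$ on $[1,s^*-1]$ and $(s^*,k_2)$ on $[s^*,n]$, yields $T\ge (b_1-b_{\tilde{j}})+(b_{s^*}-b_{k_2})$, but this need not reach $-b_1$: $b_1-b_{\tilde{j}}$ and $b_{s^*}$ can both be tiny while $b_{k_2}$ is close to $0$, so one must iterate. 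The paper carries out exactly such an iteration: after the first pair $(l,r)$ it defines $a_L:=\max\{a_1,\dots,a_l\}$, $a_R:=\min\{a_r,\dots,a_n\}$; if $a_L\ge a_R$ the two segmental variations already telescope to $\ge -b_1$, and if $a_L<a_R$ then $\pop(l)$ and $\pop(r-1)$ both nearly hold and are rescued only by new indices $l<l_1<r_1<r$ satisfying a stronger nesting condition. The key mechanism is a \emph{dichotomy at each step}: either a telescoping sum of the accumulated segmental variations already reaches $-b_1$ (done), or one extracts a strictly nested pair of indices and repeats. Since the indices are strictly nested inside $[1,n]$ the process terminates, and at termination the only remaining possibility is that some $\pop(s)$ holds, contradicting the hypothesis. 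Without this dichotomy and its termination argument your sketch cannot be made to close: you have identified the right ingredients (the same initial indices, the telescoping bound on $T$, the guess that an iteration using failures of intermediate $\pop(s)$ is needed), but the crucial step that makes each iteration succeed, and the reason the loop terminates, are absent.
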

\begin{proof}
If $a_1\ge 1$ Lemma \ref{rapiddecay} with $M=0$ immediately implies this lemma. So we will assume that $a_1<1$. Note that, as we do not have $\pop(1)$ there exists $1<l'\le n$, such that $a_{l'}<a_1$. Also note that, as we do not have $\pop(n)$ there exists $1<r'\le n$ such that $a_{r'}>1$. Let
$$l:=\max\{l'\mid a_{l'}<a_1\},\quad r:=\min\{r'\mid a_{r'}>1\}.$$
If $r=n$ then we have $\pop(n-1)$, so $r<n$. If $l> r$ then from Lemma \ref{rapiddecay} we estimate that
$$\frac{W_\mu(a)}{c(1,\mu)}\prec_N \delta^{1/2}(a)d(\mu)^{O_N(1)}\prod_{j=r}^{l-1}(a_{j+1}/a_j)^N\le \delta^{1/2}(a)a_1^Nd(\mu)^{O_N(1)},$$
and we are done. Thus we may assume that $l<r$ (note that $l\neq r $ as $a_l<a_1<1<a_r$). Now we define
$$a_L:=\max\{a_1,\dots,a_l\},\quad a_R:=\min\{a_r,\dots,a_n\}.$$
Note that, $L<l$ clearly; also $R>r$, as if $R=r$ then we have $\pop(r-1)$. Also note that, if $a_L\ge a_R$ then
$$\frac{W_\mu(a)}{c(1,\mu)}\prec_N \delta^{1/2}(a)d(\mu)^{O_N(1)}\prod_{j=L}^{l-1}(a_{j+1}/a_j)^N\prod_{i=r}^{R-1}(a_{i+1}/a_i)^N\le \delta^{1/2}(a)a_1^N d(\mu)^{O_N(1)}.$$
Thus we may now assume that $a_L<a_R$. 
Hence, we have $1<l<r<n$ such that
\begin{equation}\label{stage1}
    \max\{a_1,\dots,a_l\}\le \min\{1,\min(a_r,\dots,a_n\}\}.
\end{equation}
Now we define 
$$l_1:=\max\{l'\mid a_{l'}\le a_L\},\quad r_1:=\min\{r'\mid a_{r'}\ge a_R\}.$$
Note that, $l_1\ge l$ clearly; in fact $l_1>l$, otherwise we will have $\pop(l)$. Similarly, $r_1\le r$ clearly; in fact $r_1<r$, otherwise  we will have $\pop(r-1)$. Also $l_1\neq r_1$ as $a_{l_1}\le a_L<a_R\le a_{r_1}$. But if $l_1>r_1$ then
\begin{align*}\frac{W_\mu(a)}{c(1,\mu)}
&\prec_N\delta^{1/2}(a)d(\mu)^{O_N(1)}\prod_{j=L}^{l-1}(a_{j+1}/a_j)^N\prod_{i=r_1}^{l_1-1}(a_{i+1}/a_i)^N\prod_{k=r}^{R-1}(a_{i+1}/a_i)^N
\\
&\le \delta^{1/2}(a)a_1^Nd(\mu)^{O_N(1)}.
\end{align*}
Thus we may assume that $l_1< r_1$, as $l_1=r_1$ would have $\pop(l_1-1)$. We now define 
$$a_{L_1}:=\max\{a_1,\dots,a_{l_1}\},\quad a_{R_1}:=\min\{a_{r_1},\dots,a_n\}.$$
Note that, if $a_{L_1}=a_{l_1}$ then we will have $\pop(l_1)$, so $L_1<l_1$. Similarly, if $a_{R_1}=a_{r_1}$ then we will have $\pop(r_1-1)$, so $R_1>r_1$. Also note that, if $a_{L_1}\ge a_{R_1}$ then
\begin{align*}
    \frac{W_\mu(a)}{c(1,\mu)}
    &\prec_N\delta^{1/2}(a)d(\mu)^{O_N(1)}\prod_{j=L}^{l-1}(a_{j+1}/a_j)^N\prod_{p=L_1}^{l_1-1}(a_{p+1}/a_p)^N\prod_{i=r_1}^{R_1-1}(a_{i+1}/a_i)^N\prod_{k=r}^{R-1}(a_{i+1}/a_i)^N\\
    &\le \delta^{1/2}(a)a_1^Nd(\mu)^{O_N(1)}.
\end{align*}
So we may assume that $a_{L_1}<a_{R_1}.$ Thus we have got nested pairs $1<l<l_1<r_1<r<n$ such that
\begin{equation}\label{stage2}
    \max\{a_1,\dots,a_{l_1}\}\le \min\{1,\min(a_{r_1},\dots,a_n\}\}.
\end{equation}
Proceeding in this way we will eventually, as there are only finitely, say $P$, many steps, we will eventually obtain $l_P=r_P$ or $l_P=r_P-1$ with similar  properties as in \eqref{stage1} or \eqref{stage2}. In either case, we will arrive at $\pop$, thus a contradiction.
\end{proof}

\subsection{A few auxiliary notations}
In the rest of this section we will prove the required decomposition of $W_\mu$ into partial $M$-Whittaker functions and prove the required bounds of them. Here by $N_r, A_r, K_r...$ etc., we will denote the maximal unipotent subgroup of upper triangular matrices, the positive diagonal subgroup, the maximal compact $\mathrm{O}(r)$... in $\GL_r(\R)$, respectively. For $\mu\in \C^r$ by $W_\mu$ (suppressing $r$) we will denote the spherical Whittaker function (with our chosen normalization as in \eqref{L2whittaker}) on $\GL_r(\R)$ with parameters $\mu$. For $a:=\diag(a_1,\dots, a_n)\in A$, by $a^r$ we will denote the element $\diag(a_1,\dots, a_r)\in A_r$. Let us denote $(\alpha)^s:=(\alpha_1,\dots,\alpha_s)$ for $s\le n$ and $\sum\alpha:=\alpha_1+\dots\alpha_n$, for any $\alpha\in\C^n$. We will abbreviate the condition $\Re(\alpha_i)\ge 0$ (resp. $>0$) for all $i$ by $\Re(\alpha)\ge 0$ (resp. $>0$). By a \textit{regular} $\alpha$ we mean that the coordinates $\alpha_j$ are distinct. By $S_r$ we will denote the symmetric group of $r$ letters, which is isomorphic to the Weyl group of $\GL(r)$.

We record that the residue of $\Gamma_\R(s)$ at $s=-2n$ for any $n\in \Z_{\ge 0}$ is $2\frac{(-\pi)^n}{n!}=\frac{2(-1)^n}{\Gamma_\R(2n+2)}.$ 
Let $\nu\in \C^r$ and $\nu'\in \C^{r'}$ with $r>r'$. Let 
$$\{\nu_i-\nu'_j\mid 1\le i\le r, 1\le j\le r'\}=A(\nu,\nu')\cup B(\nu,\nu'),$$
where $A(\nu,\nu')$ is the set of elements which are of form $2\Z_{\le 0}$ i.e. a possible pole of $\Gamma_\R$ and $B(\nu,\nu')$ is the compliment. We define
\begin{equation}
L(\nu,\nu'):=\prod_{a\in A(\nu,\nu')}\res_{s=a}\Gamma_\R(s)\prod_{b\in B(\nu,\nu')}\Gamma_\R(b).
\end{equation}
We will use these notations in the rest of the section.

\subsection{Integral representation of the spherical Whittaker function}
Note that
applying the Whittaker--Plancherel formula \eqref{genwhitplan} and the $\GL(n+1)\times\GL(n)$ local functional equation \eqref{lfe} we get that
\begin{align*}
    \We\left[\begin{pmatrix}C(\Pi)&\\&g\end{pmatrix}w\right]
    =\int_{\hat{G}}\omega_\pi(-1)^n\Theta(\pi,\Pi)\sum_{\V\in\B(\pi)}{\V(gw')}\langle f,\V\rangle d\mu_p(\pi),
\end{align*}
We choose $\B(\pi):=\{\V\}$ containing an ONB of $\pi$ of $K$-types. Recall that, $f$ is spherical (see \eqref{newvec}), so we conclude that for all $\V\in \W(\pi,\psi)$ with non-trivial $K$-type $\langle f, \V\rangle=0$.
Applying $\D^l$ we obtain that
\begin{equation}\label{reducespherical}
\D^l\We\left[\begin{pmatrix}C(\Pi)&\\&g\end{pmatrix}w\right]
=\int_{\hat{G}_0}\Theta(\pi,\Pi)\D^l{W_\pi(g)}\langle f, W_\pi\rangle d\mu_p(\pi),
\end{equation}
where $W_\pi$ is an $L^2$-normalized spherical vector in $\pi$.
Thus we can also conclude that $\We\left[\begin{pmatrix}C(\Pi)&\\&g\end{pmatrix}w\right]$ is spherical. Hence it is enough to prove Proposition \ref{heart} for $g=a\in A$.
Finally, choosing $W_{\pi_\mu}:=\frac{W_\mu}{\|W_\mu\|}$, using  \eqref{spherwhitplan}, \eqref{L2whittaker}, and \eqref{planchereldensity} along with the description of the tempered spherical dual of $G$ we rewrite \eqref{reducespherical} as
\begin{equation}\label{finalreducespherical}
\D_M^l\We\left[\begin{pmatrix}C(\Pi)&\\&a\end{pmatrix}w\right]
=\int_{(0)^n}\Theta(\mu,\Pi)\D^l_M{W_\mu(a)}\langle f,W_\mu\rangle\frac{d\mu}{|c(\mu)|^{2}}.
\end{equation}
 
We record following integral representation of the spherical Whittaker function which is a corollary of Stade's formula.
\begin{lemma}\label{splKLW}
Let $\nu\in \C^{r+1}$ with $\Re(\nu_i)\ge 0$. Then
$$W_{\nu}(a^r)=\kappa_ra_r^{\sum\nu}\int_{(0)^{r-1}}W_{\nu'}(a^{r-1}/a_r)\frac{L(\frac{1}{2},\pi_\nu\otimes{\pi_{-\nu'}})}{c(\nu')c(-\nu')}d\nu',$$
for some absolute constant $\kappa_r$ (depending only on $r$).
\end{lemma}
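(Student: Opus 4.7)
We prove the lemma by combining the spherical Whittaker--Plancherel formula on $\GL_{r-1}$ with Stade's formula for the $\GL_r\times\GL_{r-1}$ Rankin--Selberg integral, followed by analytic continuation in $\nu$.

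The first step isolates the $a_r$-dependence. Using the decomposition $a^r = a_r\cdot\diag(a^{r-1}/a_r,1)$ together with the central character of $\pi_\nu$, we obtain
\[
W_\nu(a^r) \;=\; a_r^{\sum \nu}\, W_\nu(\diag(a^{r-1}/a_r,1)),
\]
reducing the lemma to an expansion of the function $F(h):=W_\nu(\diag(h,1))$ in the variable $h\in\GL_{r-1}$. The function $F$ is readily seen to be a right $K_{r-1}$-invariant Whittaker function on $N_{r-1}\backslash\GL_{r-1}$ with respect to $\psi$, since the embeddings $N_{r-1}\hookrightarrow N_r$ and $K_{r-1}\hookrightarrow K_r$ given by $g\mapsto\diag(g,1)$ preserve both the additive character and the spherical structure.

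Assuming first that $\nu$ is purely imaginary, the spherical Whittaker--Plancherel formula \eqref{spherwhitplan} on $\GL_{r-1}$ applied to $F$ yields
\[
F(h) \;=\; \int_{(0)^{r-1}} W_{\nu'}(h)\,\langle F,W_{\nu'}\rangle\, \frac{d\nu'}{c(\nu')c(-\nu')},
\]
since $|c(\nu')|^2 = c(\nu')c(-\nu')$ for $\nu'\in i\R^{r-1}$. The inner product $\langle F,W_{\nu'}\rangle$ is then identified via Stade's formula for the $\GL_r\times\GL_{r-1}$ Rankin--Selberg integral at $s=1/2$ as
\[
\langle F,W_{\nu'}\rangle \;=\; \int_{N_{r-1}\backslash\GL_{r-1}} W_\nu(\diag(h,1))\overline{W_{\nu'}(h)}\,dh \;=\; \kappa_r^{-1}\,L(1/2,\pi_\nu\otimes\pi_{-\nu'}),
\]
where $\kappa_r$ is an absolute constant arising from our normalization \eqref{defnwhittaker} and we use $\overline{W_{\nu'}}=W_{-\nu'}$ for imaginary $\nu'$. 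Specializing $h=a^{r-1}/a_r$ and substituting back yields the identity for imaginary $\nu$. Extension to $\Re(\nu_i)\ge 0$ is then by analytic continuation, which is legitimate because $L(1/2,\pi_\nu\otimes\pi_{-\nu'}) = \prod_{i,j}\Gamma_\R(1/2+\nu_i-\nu'_j)$ is holomorphic on this region when $\nu'\in i\R^{r-1}$ (each argument has real part at least $1/2$), and $W_\nu$ depends holomorphically on $\nu$ via Jacquet's integral.

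The principal technical obstacle is justifying the pointwise applicability of Whittaker--Plancherel to $F$, since $F$ need not lie in $L^2(N_{r-1}\backslash\GL_{r-1},\psi)$ a priori, and we also need pointwise equality rather than mere $L^2$ identity. This is handled using the rapid decay of spherical Whittaker functions furnished by Lemma \ref{rapiddecay} (together with the polynomial growth control of Lemma \ref{boundW} used during the analytic continuation), which provides sufficient decay of $F$ in the various Iwasawa directions to ensure absolute convergence of the Stade integral defining $\langle F,W_{\nu'}\rangle$ as well as pointwise validity of the resulting Mellin--Barnes contour integration.
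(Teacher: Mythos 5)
Your proof follows essentially the same route as the paper: factor out the central variable via $W_\nu(a^r)=a_r^{\sum\nu}W_\nu(\diag(a^{r-1}/a_r,1))$, apply the spherical Whittaker--Plancherel formula \eqref{spherwhitplan} on $\GL_{r-1}$ to the function $h\mapsto W_\nu(\diag(h,1))$, and identify the resulting inner product with $L(1/2,\pi_\nu\otimes\pi_{-\nu'})$ by Stade's formula \cite[Theorem 3.4]{St1}. The paper's proof is terser and elides the passage from unitary $\nu$ to $\Re(\nu_i)\ge 0$ by analytic continuation and the pointwise (rather than merely $L^2$) validity of the Plancherel inversion; your version correctly fills in these justifications, so it is a sound and somewhat more complete rendering of the same argument.
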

The unspecified constant appears because of our chosen normalization in \eqref{L2whittaker}. From now on we will denote any unspecified constant which only depends on $n,...$ by $\kappa_{n,...}$, i.e. $\kappa_{n,...}$ may vary from line to line.
\begin{proof}
Using \eqref{spherwhitplan} for $\GL_{r-1}(\R)$ and proceeding as before we can write 
\begin{align*}
    W_{\nu}(a^r)
    &=a_r^{\sum\nu}W_\nu\left[\begin{pmatrix}a^{r-1}/a_r&\\&1\end{pmatrix}\right]\\
    &=a_r^{\sum\nu}\int_{(0)^{r-1}}W_{\nu'}(a^{r-1}/a_r)\int_{N_{r-1}\backslash\GL_{r-1}}W_\nu\left[\begin{pmatrix}t&\\&1\end{pmatrix}\right]\overline{W_{\nu'}(t)}dt\frac{d\nu'}{|c(\nu')|^2}.
\end{align*}
We conclude the proof noting that the inner integral is a constant multiple of $L(\frac{1}{2},\pi_\nu\otimes\overline{\pi_{\nu'}})$ by Stade's formula \cite[Theorem $3.4$]{St1}.
\end{proof}
We abbreviate $\delta^{-1/2}\V$, for any $\GL_r$ Whittaker function $\V$, by $\V'$. Note that if $\Re(\nu)>0$ then shifting contour $\nu'\mapsto\nu'+1/2$ in the $\nu'$ integral in Lemma \ref{splKLW} (without crossing any pole) we obtain that
\begin{equation}\label{modifiedKLW}
    W'_\nu(a)=\kappa_r a_r^{\sum\nu}\int_{(0)^{r-1}}W'_{\nu'}(a^{r-1}/a_r)\frac{L(\nu,\nu')}{c(\nu')c(-\nu')}d\nu'.
\end{equation}
From now on we will work with $\V'$ instead of $\V$\footnote{This is because if we work with $\V$ the normalization by $\delta^{-1/2}$ will appear in every equation, somewhat unimportantly.}.


We fix $1\le s\le n$ from now on. Let $\Re(\mu)>0$ be small enough. In \eqref{modifiedKLW} we shift the contours of $\nu$ integrals to some positive quantity so that the integrand does not cross any polar hyperplanes. For instance, we may choose $\Re(\nu)>0$ such that $\max_j{\Re(\nu_j)}<\min_i\Re(\mu_i)$. We obtain
$$W'_\mu(a)=\kappa_na_n^{\sum\mu}\int W'_{\nu}(a^{n-1}/a_n)\frac{L(\mu,\nu)}{c(\nu)c(-\nu)}d\nu,$$
where the contours are the vertical lines with real parts as said above. In this section, we will mostly, not specify these type of contours explicitly. If the contours are unspecified then we will implicitly assume that the contours are vertical lines on the left of all possible poles and very close to the vertical lines with real parts being zero, as described above.
In the RHS we expand $W'_\nu$ using \eqref{modifiedKLW} exactly same as before and obtain
$$W'_\mu(a)=\kappa_na_n^{\sum\mu}\int\left(\frac{a_{n-1}}{a_n}\right)^{\sum\nu}\frac{L(\mu,\nu)}{c(\nu)c(-\nu)}\int W'_{\nu'}(a^{n-2}/a_{n-1})\frac{L(\nu,\nu')}{c(\nu')c(-\nu')}d\nu'd\nu.$$
Proceeding in this way we get an iterated integral representation of $\V'$ as following.
\begin{equation}\label{multipleKLW}
    W'_\mu(a)=\kappa_na_n^{\sum\mu}\int\left(\frac{a_{n-1}}{a_n}\right)^{\sum\nu}\frac{L(\mu,\nu)}{c(\nu)c(-\nu)}\dots\int W'_\tau(a^{s+1}/a_{s+2})\frac{L(\gamma,\tau)}{c(\tau)c(-\tau)}d\tau d\gamma\dots d\nu.
\end{equation}

\subsection{Decomposition of the spherical Whittaker function}
Now we start preparing for the decomposition of the Whittaker function. We define some power series which are analogues of the $I$-Bessel function on $\GL(2)$ (see the relevant discussion in \S\ref{sketch-n-ge-2}). Let $\tau\in \C^{s+1}$ with $\Re(\tau)>0$ small enough, and $k\in \Z^s_{\ge 0}$. We define 
\begin{equation}\label{definep}
    P_{k}(\tau):=\frac{L(\tau,(\tau)^s+2k)}{c(\tau)c(-\tau)c((\tau)^s+2k)c(-(\tau)^s-2k)},
\end{equation}
and 
\begin{equation}\label{definem}
    M_\tau(a^{s+1}):=\sum_{k\in \Z^s_{\ge 0}}P_k(\tau)W'_{(\tau)^s+2k}(a^s/a_{s+1}).
\end{equation}
In the next four lemmata we prove the decomposition of $\V'$ into $M$, inductively (see Lemma \ref{partialdecomposition} for the statement we aim for). For ease of the reader we describe the themes of these technical lemmata. In Lemma \ref{startdecomposition} we prove the base case of the induction. We start with the inner most integral in \eqref{multipleKLW} and shift all the contours to infinity. The integrand will cross finitely many families of infinitely many poles. We will collect the residues and construct the $M$-Whittaker functions (more precisely the function $M^1=M$, according to the definition in \eqref{definem} and \eqref{recursivem}) as power series. Then we replace the inner integral with such an $M$-Whittaker function. We do the similar contour shifting process in the second inner most integral of \eqref{multipleKLW}, and similarly collecting residues and making power series we obtain similar $M$-Whittaker functions (more precisely $M^2$, see definition in \eqref{recursivem}). Inductively, we define certain partial $M$-Whittaker functions in \eqref{recursivem}. Finally, we prove the inductive step of the decomposition in the proof of Lemma \ref{partialdecomposition}.

We note that a similar decomposition in the case of $s=1$ appeared in \cite{H, Bu}. In the both articles the authors proved the results by the method of differential equation, while we prove it by the spectral analysis and the zeta integrals.

\begin{lemma}\label{startdecomposition}
For $\tau\in\C^{s+1}$ with $\Re(\tau)>0$ and small enough. Then,
$$\frac{1}{c(\tau)c(-\tau)}W'_\tau(a^{s+1})=\kappa_sa_{s+1}^{\sum\tau}\sum_{\sigma\in S_{s+1}}M_{\sigma\tau}(a^{s+1}),$$ such that $M_\tau$ are entire in $\tau$.
\end{lemma}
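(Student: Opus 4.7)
My plan is to apply the integral representation \eqref{modifiedKLW} once more with $r = s+1$, expressing $W'_\tau(a^{s+1})$ as a Mellin--Barnes integral over $\tau' \in \C^s$, and then shift the $\tau'$-contours all the way to the right, picking up residues that assemble into the $M$-Whittaker functions. Concretely, \eqref{modifiedKLW} gives
$$W'_\tau(a^{s+1}) = \kappa_{s+1}\, a_{s+1}^{\sum\tau} \int W'_{\tau'}(a^s/a_{s+1}) \frac{L(\tau,\tau')}{c(\tau')c(-\tau')}\, d\tau',$$
with contours $\Re(\tau'_j) = \eta_j$ chosen slightly to the left of all $\Re(\tau_i)$. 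The only poles of the integrand in $\tau'_j$ come from the $\Gamma_\R(\tau_i - \tau'_j)$ factors in $L(\tau,\tau')$, located at $\tau'_j = \tau_i + 2k$ for $i \in \{1,\dots,s+1\}$ and $k \in \Z_{\ge 0}$; the denominator $c(\tau')c(-\tau')$ contributes only zeros.

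After shifting each contour to $\Re(\tau'_j) = +\infty$, the contour-at-infinity contribution vanishes because $W'_{\tau'}(a^s/a_{s+1})$ decays exponentially in $\Re(\tau'_j)$ (as $a_j/a_{s+1} < 1$), dominating the polynomial Stirling growth of $L/(c c)$. A residue at $\tau'_j = \tau_{i_j} + 2k_j$ is nonzero only when the indices $i_1,\dots,i_s$ are pairwise distinct: if $i_j = i_{j'}$ for $j \ne j'$, then $\tau'_j - \tau'_{j'} = 2(k_j - k_{j'}) \in 2\Z$ forces a pole of $\Gamma_\R$ in $c(\tau')c(-\tau')$, killing the residue. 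Thus the nonzero residues are indexed by a ``missing'' element $i_0 \in \{1,\dots,s+1\}$, a bijection $\{1,\dots,s\} \xrightarrow{\sim} \{1,\dots,s+1\}\setminus\{i_0\}$, and a multi-index $\mathbf{k} \in \Z_{\ge 0}^s$. Using $\res_{s=-2k}\Gamma_\R(s) = \frac{2(-1)^k}{\Gamma_\R(2k+2)}$, a direct computation of the residue at the ``identity'' bijection for $i_0 = s+1$ yields
$$L(\tau, (\tau)^s + 2\mathbf{k}) \cdot \frac{W'_{(\tau)^s+2\mathbf{k}}(a^s/a_{s+1})}{c((\tau)^s+2\mathbf{k})\, c(-(\tau)^s-2\mathbf{k})},$$
which upon dividing by $c(\tau)c(-\tau)$ is exactly $P_\mathbf{k}(\tau)\, W'_{(\tau)^s+2\mathbf{k}}(a^s/a_{s+1})$. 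The $S_s$-symmetry of the integrand in $\tau'$ accounts for the $s!$ bijections giving the same contribution, and summing over $i_0 \in \{1,\dots,s+1\}$ using the $S_s$-invariance of $M_\tau$ in $(\tau_1,\dots,\tau_s)$ reorganizes the total as $\sum_{\sigma \in S_{s+1}} M_{\sigma\tau}(a^{s+1})$, with the unspecified constant absorbed into $\kappa_s$.

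The main obstacles will be (i) rigorously justifying the vanishing of the contour-at-infinity and (ii) establishing entirety of $M_\tau$. For (i), the estimates of Lemma \ref{boundW} for $W'_{\tau'}$ combined with Stirling control of $L(\tau,\tau')/[c(\tau')c(-\tau')]$ suffice when $a_j < a_{s+1}$, and analytic continuation in the parameters extends the identity elsewhere. For (ii), the potential poles of $P_\mathbf{k}(\tau)$ lie along hyperplanes $\tau_i - \tau_j \in 2\Z$ (from $\Gamma_\R$-factors in $L$) and are cancelled pole-by-pole by zeros of $1/[c(\tau)c(-\tau)\, c((\tau)^s+2\mathbf{k})\, c(-(\tau)^s-2\mathbf{k})]$ along the same hyperplanes (a pole-order check analogous to the entirety of the $K$-Bessel series in its index); thus each $P_\mathbf{k}(\tau)$ is entire, and the rapid Stirling decay of $1/\Gamma_\R(2k_j+2)$ guarantees absolute and locally uniform convergence of the series defining $M_\tau$, making it entire in $\tau$.
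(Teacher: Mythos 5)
Your proposal follows the same route as the paper: start from the iterated Mellin--Barnes representation \eqref{modifiedKLW}, shift the $\tau'$-contours to the right, collect residues indexed by injections $\{1,\dots,s\}\hookrightarrow\{1,\dots,s+1\}$, reorganize the sum over $S_{s+1}$, and verify entirety of $P_k$ by a pole/zero order count on the hyperplanes $\tau_i-\tau_j\in 2\Z$. All of this matches. The one substantive inaccuracy is your characterization of $L(\tau,\tau')/[c(\tau')c(-\tau')]$ as having ``polynomial Stirling growth'' as $\Re(\tau'_j)\to+\infty$; in fact each factor $\Gamma_\R(\tau_i-\tau'_j)$ decays inverse-factorially (on $\Re(\tau'_j)=2N+1$ the reflection formula gives $|\Gamma_\R(\tau_i-\tau'_j)|\asymp 1/\Gamma_\R(2N+O(1))\asymp 1/N!$, while $1/[c(\tau')c(-\tau')]$ only grows polynomially), so the kernel itself supplies factorial decay. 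This is exactly what the paper exploits: it bounds the shifted integral by $N^{O(1)}/N!\to 0$, which dominates the $a$-dependent exponential growth of $W'_{\tau'}(a^s/a_{s+1})$ for \emph{every} $a$. As a result, your proposed detour -- restrict to $a_j<a_{s+1}$ so that $W'$ decays, then extend the identity by analytic continuation in $a$ -- is valid but unnecessary, and it would cost you an extra argument establishing real-analyticity in $a$ of the series defining $M_\tau$. A purely procedural difference: the paper shifts $z_1,z_2,\dots$ one at a time and uses the $\Gamma_\R$ functional equation after each residue to see explicitly that the surviving polar set in $z_{j+1}$ omits $\tau_i+2\Z_{\ge 0}$ for indices $i$ already consumed; you reach the same conclusion by observing that the simple pole of $\Gamma_\R(\tau_{i}-\tau'_{j'})$ is cancelled by a zero of $1/[\Gamma_\R(\tau'_j-\tau'_{j'})\Gamma_\R(\tau'_{j'}-\tau'_j)]$ once $\tau'_j-\tau'_{j'}\in 2\Z$, which is equivalent but less explicit.
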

\begin{proof}
We will prove the equality for regular $\tau$ so that the result will follow by analyticity. Note that using \eqref{modifiedKLW} we write
\begin{align*}
    W'_\tau(a^{s+1})
    &=\kappa_sa_{s+1}^{\sum\tau}\int_{(0)^{s}} W'_{z}(a^{s}/a_{s+1})\frac{L(\tau,z)}{c(z)c(-z)}dz.
\end{align*}
We want to shift the contour of $z_1$ to $\infty$. To justify that we first shift $z_1$ contour to $\Re(z_1)=2N+1$ collect residues at the poles and estimate the following shifted integral
$$\int_{(0)^{s-1}} \int_{(2N+1)}W'_{z_1,z'}(a^s/a_{s+1})\frac{L(\tau,z')L(\tau,z_1)}{c(z')c(-z')\prod_{i=2}^s\Gamma_\R(z_1-z_s)\Gamma_\R(z_s-z_1)}dz_1dz'.$$
We perturb the $z'$ contour a little bit so that we can apply Lemma \ref{boundW} to obtain (dropping the entries of $W_{z_1,z'}$
$$W'_{z_1,z'}\ll_a |c(1,-z')|\prod_{i=2}^s|\Gamma_\R(1+z_1-z_s)|.$$
We note that, using Stirling's approximation
$$\frac{c(1,-z')}{c(-z')}\prod_{i=2}^s\frac{\Gamma_\R(1+z_1-z_s)}{\Gamma_\R(z_1-z_s)}\ll \prod_{i=2}^s(1+|z_i|)^{O(1)}\prod_{i=2}^s|z_1-z_i|^{O(1)}.$$
On the other hand,
$$L(\tau,z_1)\ll (N!)^{-s-1}\prod_{i=1}^{s+1}\Gamma_\R(\tau_j-1/2-\Im(z_1)).$$
Writing $z_1$ with $\Re(z_1)=2N+1$ as $z_1+2N+1$ with $\Re(z_1)=0$ we obtain, by Stirling's estimate, that the integral is bounded by
$$\ll_\tau \frac{N^{O(1)}}{(N!)^{s+1}}\int E_\tau(z)\prod_{i=2}^s(1+|z_i|)^{O(1)}|z_1-z_i|^{O(1)}\prod_{k=1}^N|z_i-z_1-k|,$$
where $$E_\tau(z):=\exp\left[-\sum_{i,j}|\Im(\tau_i-z_j)|+\sum_{i\neq j}|\Im(z_i-z_j)|\right]\ll_\tau \exp\left[-\sum_i|\Im(z_i)|\right].$$
We estimate 
$$(1+|z_i|)^{O(1)}|z_1-z_i|^{O(1)}\prod_{k=1}^N|z_i-z_1-k|\ll z_1^{O(1)}z_i^{O(1)}\sum_{k_i\le N; r}k_1\dots k_r|z_1-z_i|^{N-r},$$
Upon integrating the above against $\exp\left[-\sum_i|\Im(z_i)|\right]$ we obtain that the integral is bounded by $\frac{N^{O(1)}}{(N!)^{s+1}}\prod_{i=1}^s (N+O(1))!\ll \frac{N^{O(1)}}{N!}$, which tends to zero as $N\to \infty$.

Now we shift the $z_1$ contour to infinity. We cross (simple) poles and gather the corresponding residues to obtain the following:
\begin{align*}
    &\int_{(0)^s} W'_z(a^s/a_{s+1})\frac{L(\tau,z)}{c(z)c(-z)}dz\\
    &=\int_{(0)^{s-1}}\frac{\prod_{i=1}^{s+1}\prod_{j=2}^{s}\Gamma_\R(\tau_i-z_j)}{|c(z_2,\dots,z_s)|^2}\int_{(0)}W'_z(a^s/a_{s+1})\frac{\prod_{i=1}^{s+1}\Gamma_\R(\tau_i-z_1)}{\prod_{j=2}^s\Gamma_\R(z_1-z_j)\Gamma_\R(z_j-z_1)}dz_1dz_2\dots dz_s\\
    &=\sum_{i=1}^{s+1}\int_{(0)^{s-1}}\frac{\prod_{i=1}^{s+1}\prod_{j=2}^{s}\Gamma_\R(\tau_i-z_j)}{|c(z_2,\dots,z_s)|^2}\sum_{k_1=0}^\infty \frac{W'_{\tau_i+2k_1,z_2,\dots,z_s}(a^s/a_{s+1})L(\tau,\tau_i+2k_1)}{\prod_{j=2}^s\Gamma_\R(\tau_i-z_j+2k_1)\Gamma_\R(z_j-\tau_i-2k_1)}dz_2\dots dz_s.
\end{align*}
We record the functional equation for $\Gamma_\R$ that $\forall m\in \Z$, 
$$\Gamma_\R(s)\Gamma_\R(-s)=(-1)^{m-1}\frac{2\pi}{s}\Gamma_\R(s+2m)\Gamma_\R(2-s-2m).$$
Using this we note that
$$\prod_{j=2}^{s}\frac{\Gamma_\R(\tau_i-z_j)}{\Gamma_\R(\tau_i-z_j+2k_1)\Gamma_\R(z_j-\tau_i-2k_1)}=\prod_{j=2}^{s}\frac{(-1)^{k_1}(\tau_i-z_j+2k_1)\Gamma_\R(\tau_i-z_j)}{2\pi\Gamma_\R(\tau_i-z_j)\Gamma_\R(z_j-\tau_i+2)}.$$
Hence in the $i$'th summand above, any $z_2$ only has family of poles at $\tau_j$ for $j\neq i$. Now we shift the $z_2$ contour to $\infty$ (upon similar justification as in the case of $z_1$). Proceeding in this way we obtain that for any $s$-tuple $\tau'$ consisting of distinct elements from $\{\tau_1,\dots,\tau_{s+1}\}$
$$\int_{(0)^s} W'_z(a^s/a_{s+1})\frac{L(\tau,z)}{c(z)c(-z)}dz=\sum_{\tau'}\sum_{k\in Z^{s}_{\ge 0}}C_{k}(\tau)W'_{\tau'+2k}(a^s/a_{s+1}),$$
where 
$$C_k(\tau,\tau'):=\frac{L(\tau,\tau'+2k)}{c(\tau'+2k)c(-\tau'-2k)}.$$
Noting that, if $\sigma\in S_{s+1}$ such that $(\sigma\tau)^s=\tau'$ then
$$P_{k}(\sigma\tau)=\frac{C_k(\tau,\tau')}{c(\tau)c(-\tau)}.$$
Thus we conclude the proof of the decomposition. To check that $M_\tau$ is holomorphic it is enough to check that $P_k(\tau)$ are holomorphic ($\V'$ is entire in its parameters, see \cite{J3}) and the series defining $M_\tau$ in \eqref{definem} is locally uniformly convergent. Here we only check that $P_k(\tau)$ are holomorphic. Later in Lemma \ref{boundofM} we will estimate $P_k$ which, along with Lemma \ref{boundW}, will imply locally uniform convergence. To check holomorphicity we first note
$$c(\tau)c(-\tau)=c((\tau)^s)c(-(\tau)^s)\prod_{j=1}^s\Gamma_\R(\tau_{s+1}-\tau_j)\Gamma_\R(\tau_j-\tau_{s+1}).$$
and thus $P_{k}(\tau)$ equals to
\begin{align*}
    &\prod_{j=1}^s\frac{(-\pi)^{k_j}}{k_j!}\prod_{i>j}\frac{\Gamma_\R(\tau_i-\tau_j-2k_j)}{\Gamma_\R(\tau_j-\tau_i)\Gamma_\R(\tau_i-\tau_j)}\prod_{i<j}\frac{\Gamma_\R(\tau_i-\tau_j-2k_j)}{\Gamma_\R(\tau_j+2k_j-\tau_i-2k_i)\Gamma_\R(\tau_i+2k_i-\tau_j-2k_j)}.
\end{align*}
We check that each factor in the last expression of $P_k(\tau)$ is holomorphic, thus we conclude.
\end{proof}

Now using Lemma \ref{startdecomposition} we can expand the inner most integral in \eqref{multipleKLW} and rewrite \eqref{multipleKLW} as following.
\begin{align*}
    W'_\mu(a)
    &=\kappa_{s,n}a_n^{\sum\mu}\int \left(\frac{a_{n-1}}{a_n}\right)^{\sum\nu}\frac{L(\mu,\nu)}{c(\nu)c(-\nu)}\dots\\
    &\dots\int_{(0)^{s+1}} \left(\frac{a_{s+1}}{a_{s+2}}\right)^{\sum\tau}L(\gamma,\tau)\sum_{\sigma\in S_{s+1}}M_{\sigma\tau}(a^{s+1})d\tau d\gamma\dots d\nu.
\end{align*}     
We interchange the innermost integral with the finite sum over $S_{s+1}$. We do a change of variable $\sigma\tau\mapsto\tau$ and rewrite as following.
\begin{align*}
    W'_\mu(a)
    &=\kappa_{s,n}a_n^{\sum\mu}\int \left(\frac{a_{n-1}}{a_n}\right)^{\sum\nu}\frac{L(\mu,\nu)}{c(\nu)c(-\nu)}\dots
    (s+1)!\int_{(0)}\left(\frac{a_{s+1}}{a_{s+2}}\right)^{\tau_{s+1}}L(\gamma,\tau_{s+1})\\
    &\times\int_{(0)^{s}}\left(\frac{a_{s+1}}{a_{s+2}}\right)^{\sum(\tau)^s}L(\gamma,(\tau)^s)M_{\tau}(a^{s+1})d(\tau)^sd\tau_{s+1} d\gamma\dots d\nu.
\end{align*}
Now we shift contours of the last integral to $\infty$. The family of the poles will occur at $(\tau)^s_j=\gamma_i+2\Z_{\ge 0}$ for all $1\le j \le s$ and some $1\le i\le s+1$. The residues will be of the form
$$\left(\frac{a_{s+1}}{a_{s+2}}\right)^{\sum\gamma'+2l}L(\gamma,\gamma'+2l)M_{\gamma'+2l,\tau_{s+1}},$$
for $l\in Z^s_{\ge 0}$ and $\gamma'\in \C^s$ with $\gamma'_j\in\{\gamma_1,\dots,\gamma_{s+2}\}$. However, some of these residues do not occur in the asymptotic expansion of $\V'$, for instance, the residues with parameter $\gamma'$ with $\gamma'_1=\gamma'_2$. In fact, the terms in the asymptotic expansion come from the residues where the coordinates of $\gamma'$ are distinct\footnote{We may assume that $\gamma$ is regular.}.
But, thanks to Lemma \ref{zeroresidues} where we show that the residues other than 
$$\left(\frac{a_{s+1}}{a_{s+2}}\right)^{\sum(\sigma\gamma)^s+2l}L(\gamma,(\sigma\gamma)^s+2l)M_{(\sigma\gamma)^s+2l,\tau_{s+1}},$$
for $\sigma\in S_{s+2}$, will vanish identically. In other words, the family of poles (of form $\gamma'+2\Z^s$) of the integrand only occur at $\gamma$ which has distinct coordinates.

We follow the same method of collecting residues and construct the power series. We continue this process in \eqref{multipleKLW} until the outer most integral. We recursively define the following.
$$M^{1}_\tau(a^{s+1}):=M_\tau(a^{s+1}),$$
for $r\ge 1$, and $\alpha\in \C^{s+r}$
\begin{multline}\label{recursivem}
    M^r_{\alpha}(a^{s+r}):=\int_{(0)^{r-1}}\left(\frac{a_{s+r-1}}{a_{s+r}}\right)^{\sum z}L(\alpha,z)\\
    \sum_{l\in\Z^{s}_{\ge 0}}\left(\frac{a_{s+r-1}}{a_{s+r}}\right)^{\sum(\alpha)^s+2l}\frac{L(\alpha,(\alpha)^s+2l)}{c(\alpha)c(-\alpha)}M^{r-1}_{(\alpha)^s+2l,z}(a^{s+r-1})dz.
\end{multline}
In each stage of contour shifting we need to prove that the integrand is holomorphic at the \textit{non-regular} points, as discussed above. We show this, inductively, in Lemma \ref{zeroresidues}, which is the base case, and in Lemma \ref{finalzeroresidues}, where we prove the inductive step. These lemmata can be thought as higher rank analogues of the fact that $I_n=I_{-n}$ for any natural number $n$, where $I$ denotes the classical $I$-Bessel function. 

\begin{lemma}\label{zeroresidues}
Let $\tau\in \C^{s+1}$ such that $\Re(\tau)\ge 0$ and $\tau_a\equiv\tau_b\mod 2\Z$ for $1\le a\neq b \le s$. Then $M_\tau$ is identically zero.
\end{lemma}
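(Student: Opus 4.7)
The plan is to establish $M_\tau=0$ by exhibiting a term-by-term cancellation in the series \eqref{definem}, driven by the Weyl invariance of the spherical Whittaker function together with a sign-change identity for the coefficients $P_k(\tau)$. Without loss of generality (swap $a,b$ if needed), write $\tau_a=\tau_b+2m$ with $m\in\mathbb{Z}_{\ge 0}$. I will only need to compute limits from the explicit factorised formula for $P_k(\tau)$ derived in the proof of Lemma \ref{startdecomposition}.

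First I would introduce the pairing. For $k\in\mathbb{Z}^s_{\ge 0}$ with $k_b\ge m$, define $\iota(k)$ by $\iota(k)_a:=k_b-m$, $\iota(k)_b:=k_a+m$, and $\iota(k)_j:=k_j$ otherwise; this is an involution on $\{k:k_b\ge m\}$ whose fixed-point set is $\{k:k_a-k_b+m=0\}$. Using $\tau_a=\tau_b+2m$, the tuple $(\tau)^s+2\iota(k)$ is obtained from $(\tau)^s+2k$ by swapping the $a$-th and $b$-th entries, so by the Weyl-group invariance of $W'$ proved by Jacquet \cite{J3},
\[
W'_{(\tau)^s+2\iota(k)}(a^s/a_{s+1})=W'_{(\tau)^s+2k}(a^s/a_{s+1}).
\]

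The core computation is the sign relation $P_{\iota(k)}(\tau)=-P_k(\tau)$ for $k_b\ge m$ and $k_a-k_b+m\ne 0$. To see this, I would isolate in the formula for $P_k(\tau)$ the product of the four gamma factors involving only the indices $a,b$ together with the prefactor $(-\pi)^{k_a+k_b}/(k_a!\,k_b!)$; all other factors are manifestly invariant under $\iota$ because they either depend on no $k$-coordinate that is moved, or (for $j\notin\{a,b\}$, $i\in\{a,b\}$) the pair for $i=a$ and the pair for $i=b$ are interchanged thanks to the identity $\tau_a+2(k_b-m)=\tau_b+2k_b$. Setting $\tau_a-\tau_b=2m+\varepsilon$ and letting $\varepsilon\to 0$, one finds that the $\{a,b\}$-block of $P_k$ has the form $-\pi^{2(k_b-m)}m!u!/\bigl((m+k_a)!(k_b-m)!\Gamma_\mathbb{R}(2m)\Gamma_\mathbb{R}(2u)\bigr)$ with $u=k_a-k_b+m$; the same block evaluated at $\iota(k)$ is $+\pi^{2(k_b-m)}m!u!/\bigl(k_a!\,k_b!\Gamma_\mathbb{R}(2m)\Gamma_\mathbb{R}(2u)\bigr)$, and combining with the ratio $k_a!\,k_b!/\bigl((k_b-m)!(k_a+m)!\bigr)$ coming from the $(-\pi)^{k_j}/k_j!$ prefactor yields exactly $-1$. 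At the fixed points $u=0$, an extra pole of $\Gamma_\mathbb{R}(\pm 2u)$ appears in the denominator of the limiting ratio, giving $P_k(\tau)=0$.

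Second, I would dispose of the unpaired indices $k$ with $k_b<m$ by the same limiting analysis. In that regime, one pole in the numerator disappears (since $\Gamma_\mathbb{R}(2(m-k_b))$ becomes regular) while the denominator still contains poles at $\Gamma_\mathbb{R}(-2m)$ and $\Gamma_\mathbb{R}(\pm 2u)$ (with $u>0$ automatic), so the $\varepsilon$-order of the denominator exceeds that of the numerator by one, forcing $P_k(\tau)=0$. Finally, the case $m=0$ is cleanest: both $\Gamma_\mathbb{R}(\tau_a-\tau_b)$ and $\Gamma_\mathbb{R}(\tau_b-\tau_a)$ become $\Gamma_\mathbb{R}(0)$, contributing two extra denominator poles, so every $P_k(\tau)$ vanishes individually.

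Putting these together, I split the series as $M_\tau=\sum_{k_b<m}+\sum_{k_b\ge m}$. The first sum is zero term-by-term, and the second equals $\tfrac12\sum_{k_b\ge m}\bigl(P_k+P_{\iota(k)}\bigr)W'_{(\tau)^s+2k}=0$; absolute convergence of these sums follows from Lemma \ref{boundW}, which is used in the proof of Lemma \ref{startdecomposition} to justify the rearrangement. The main technical obstacle, in line with the overall flavour of \S\ref{sec:proof-heart}, is the careful $\varepsilon$-expansion of the gamma-factor block around the non-regular point, and in particular verifying that no ``hidden'' contribution of $\iota$ leaks from the cross-factors coupling the indices $\{a,b\}$ to the remaining indices—this is handled by the clean identity $\tau_a+2(k_b-m)=\tau_b+2k_b$.
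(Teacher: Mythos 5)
Your proof is correct and rests on the same underlying mechanism as the paper's: the vanishing of $P_k(\tau)$ both for $k_b<m$ and at the fixed points $k_b=k_a+m$, the sign-flip of the surviving coefficients under the shift/swap, and the Weyl invariance of $W'$. You organize this as a direct term-by-term cancellation via the involution $\iota$ on $\{k:k_b\ge m\}$, whereas the paper reaches the same conclusion by comparing $M_\tau$ with $M_{\sigma_{ab}\tau}$ after the change of variable $k_b\mapsto k_b+m$ (getting $M_{\sigma_{ab}\tau}=M_\tau$ from Weyl invariance and $P_{\sigma_{ab}k}(\sigma_{ab}\tau)=P_k(\tau)$, and $M_{\sigma_{ab}\tau}=-M_\tau$ from the antisymmetric factor); these are equivalent packagings of one argument, though in a full write-up you should be careful with the closed form you quote for the $\{a,b\}$-block, since $u!$ and $\Gamma_\R(2u)$ with $u=k_a-k_b+m$ require interpretation as a ratio of residues when $u<0$ (which the paper handles through the $\Gamma_\R$-functional equation).
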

\begin{proof}
Let $1\le a< b\le s$. 
Expanding out the definition
\eqref{definep} of $P_k$
gives
\begin{align*}
    P_k(\tau)
    &=\frac{\Gamma_\R(\tau_a-\tau_b-2k_b)\Gamma_\R(\tau_b-\tau_a-2k_a)\res_{s=-2k_a}\Gamma_\R(s)\res_{s=-2k_b}\Gamma_\R(s)}{\Gamma_\R(\tau_a-\tau_b)\Gamma_\R(\tau_b-\tau_a)\Gamma_\R(\tau_a-\tau_b+2k_a-2k_b)\Gamma_\R(\tau_b-\tau_a+2k_b-2k_a)}\\
    &\times \frac{\prod_{j\neq a,b}L(\tau,\tau_j+2k_j)\prod_{i\neq a,b}\Gamma_\R(\tau_i-\tau_a-2k_a)\Gamma_\R(\tau_i-\tau_b-2k_b)}{\prod_{(i,j)\neq (a,b),(b,a)}\Gamma_\R(\tau_i-\tau_j)\Gamma_\R(\tau_i-\tau_j+2k_i-2k_j)}.
\end{align*}
Computing the residues we obtain that the above equals to
\begin{align*}
    &\frac{\pi^{-4}(\tau_a-\tau_b)(\tau_b-\tau_a+2k_b-2k_a)}{\Gamma_\R(2k_a+2)\Gamma_\R(2k_b+2)\Gamma_\R(\tau_a+2k_a-\tau_b+2)\Gamma_\R(\tau_b+2k_b-\tau_a+2)}\\
    &\times Q_k(\tau)\times\frac{\prod_{j\neq a,b}L(\tau,\tau_j+2k_j)}{\prod_{(i,j)\neq (a,b),(b,a)}\Gamma_\R(\tau_i-\tau_j)},
\end{align*}
where $$Q_k(\tau):=\frac{\prod_{i\neq a,b}\Gamma_\R(\tau_i-\tau_a-2k_a)\Gamma_\R(\tau_i-\tau_b-2k_b)}{\prod_{(i,j)\neq (a,b),(b,a)}\Gamma_\R(\tau_i-\tau_j+2k_i-2k_j)}.$$
Suppose, without loss of generality, that $\tau_a-\tau_b=2l\ge 0$. Note that $\Gamma_\R(\tau_b-\tau_a+2k_b+2)^{-1}=0$ for $k_b<l$. Now changing $k_b\mapsto k_b+l$ and replacing $\tau_b+2l=\tau_a$ only in the first two factors we obtain from \eqref{definem} that
\begin{align*}
    M_\tau
    =\sum_{k\in \Z^s_{\ge 0}}&W'_{\dots,\tau_a+2k_a,\dots,\tau_a+2k_b,\dots}\times Q_k(\dots,\tau_a,\dots,\tau_a,\dots)\times \frac{\prod_{j\neq a,b}L(\tau,\tau_j+2k_j)}{\prod_{(i,j)\neq (a,b),(b,a)}\Gamma_\R(\tau_i-\tau_j)}\\
    &\times\frac{\pi^{-4}(2l)(k_b-k_a)}{\Gamma_\R(2k_a+2)\Gamma_\R(2k_b+2l+2)\Gamma_\R(2k_a+2l+2)\Gamma_\R(2k_b+2)}.
\end{align*}
Let $\sigma_{ab}$ be the element in the Weyl group which transposes the $a$'th and $b$'th elements of $\tau$ and fixes everything else. Then doing a similar calculation we can check that 
\begin{align*}
    M_{\sigma_{ab}\tau}
    &=\sum_{k\in \Z^s_{\ge 0}}W'_{\dots,\tau_a+2k_a,\dots,\tau_a+2k_b,\dots}\times Q_k(\dots,\tau_a,\dots,\tau_a,\dots)\times \frac{\prod_{j\neq a,b}L(\tau,\tau_j+2k_j)}{\prod_{(i,j)\neq (a,b),(b,a)}\Gamma_\R(\tau_i-\tau_j)}\\
    &\times\frac{\pi^{-4}(-2l)(k_b-k_a)}{\Gamma_\R(2k_a+2l+2)\Gamma_\R(2k_b+2)\Gamma_\R(2k_a+2)\Gamma_\R(2k_b+2l+2)}\\
    &=-M_\tau.
\end{align*}
On the other hand it can be easily checked from \eqref{definep} that
$$P_{\sigma_{ab}k}(\sigma_{ab}\tau)=P_k(\tau).$$
Thus using the fact that Whittaker function is invariant under the Weyl group action on its parameters we can also conclude that 
$$M_\tau=M_{\sigma_{ab}\tau},$$ 
hence the conclusion.
\end{proof}

\begin{lemma}\label{finalzeroresidues}
Let $\alpha\in\C^{s+r}$ such that $\Re(\alpha)\ge 0$ and $\alpha_a\equiv\alpha_b\mod 2\Z$ for $1\le a\neq b \le s$. Then $M^r_\alpha$ is identically zero.
\end{lemma}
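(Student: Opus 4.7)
The plan is to argue by induction on $r$, with the base case $r=1$ being precisely Lemma \ref{zeroresidues} already proved. Assume now that $r \geq 2$ and that the conclusion of the lemma holds for $M^{r-1}$: that is, $M^{r-1}_{\beta} \equiv 0$ for every $\beta \in \C^{s+r-1}$ with $\Re(\beta) \geq 0$ whose first $s$ coordinates satisfy $\beta_a \equiv \beta_b \pmod{2\Z}$ for some $1 \leq a \neq b \leq s$. I would then inspect the recursive definition \eqref{recursivem} and observe that the only dependence of $M^r_\alpha$ on the Whittaker-theoretic content enters through the factor $M^{r-1}_{(\alpha)^s + 2l, z}(a^{s+r-1})$ inside the sum over $l \in \Z^s_{\geq 0}$.

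The key observation is that the first $s$ coordinates of the parameter of $M^{r-1}$ here are precisely $\alpha_i + 2 l_i$ for $1 \leq i \leq s$, so for any $1 \leq a \neq b \leq s$ we have
\[
(\alpha_a + 2 l_a) - (\alpha_b + 2 l_b) = (\alpha_a - \alpha_b) + 2(l_a - l_b) \in 2\Z
\]
by the hypothesis $\alpha_a \equiv \alpha_b \pmod{2\Z}$. Hence every $\beta := ((\alpha)^s + 2l, z)$ that arises as an argument of $M^{r-1}$ in the sum satisfies the inductive hypothesis, so $M^{r-1}_\beta \equiv 0$. Every summand in the definition \eqref{recursivem} vanishes identically in $a^{s+r}$ and in $z$, and so after integrating $z$ we conclude $M^r_\alpha \equiv 0$, completing the induction.

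The main technical point to address carefully will be the possibility that the prefactor $L(\alpha, (\alpha)^s + 2l)/(c(\alpha) c(-\alpha))$ acquires a pole on the hypothesis locus, which in principle could turn a $0 \cdot \infty$ form into something nonzero. I would handle this by a continuity / analyticity argument mirroring the one used in Lemma \ref{zeroresidues}: the function $M^r_\alpha$, being constructed as a sum of residues arising from contour shifts applied to the manifestly entire Whittaker integral \eqref{multipleKLW}, is itself holomorphic as a function of $\alpha$ (locally uniform convergence of the series defining it was already checked in Lemma \ref{startdecomposition} for $r=1$, and the inductive construction preserves this). So it suffices to verify vanishing on the Zariski-dense subset of the hypothesis locus where no such collision of poles with the zeros of $M^{r-1}$ occurs, which is exactly the content of the inductive step above; the vanishing then propagates to the full hypothesis locus by analytic continuation. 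If one prefers to avoid this continuity step, one can replicate the symmetry argument of Lemma \ref{zeroresidues} directly: the factor $L(\alpha, \cdot)$ together with the Weyl-group invariance of $M^{r-1}$ in its first $s$ coordinates (itself a consequence of the inductive setup and of the Weyl invariance of the spherical Whittaker function in its parameters) makes $M^r_\alpha$ simultaneously symmetric and antisymmetric under the transposition $\sigma_{ab}$ of $\alpha_a$ and $\alpha_b$, forcing $M^r_\alpha \equiv 0$.
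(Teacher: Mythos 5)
The plan's main argument is incorrect, and neither of the proposed fixes actually closes the gap you yourself identify.

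The core problem is that the definition \eqref{recursivem} is made for generic $\alpha$; on the hypothesis locus the function $M^r_\alpha$ is the \emph{analytic continuation} of that formula, not its naive substitution. As $\alpha_a-\alpha_b\to 2t$ (say $t\geq 0$), the prefactor $L(\alpha,(\alpha)^s+2l)/(c(\alpha)c(-\alpha))$, continued in from generic $\alpha$, develops a simple pole for every $l$ with $l_b\geq t$ (via $\Gamma_\R(\alpha_b-\alpha_a-2l_a)\to\Gamma_\R(-2(t+l_a))$), and this pole exactly meets the simple zero of $M^{r-1}_{(\alpha)^s+2l,z}$ coming from the inductive hypothesis. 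The limit is a nonzero finite quantity proportional to $\lim_{\beta\to 0}\beta^{-1}M^{r-1}$; the summands do \emph{not} vanish. If instead you substitute the degenerate value of $L$ (taking residues at the newly integral differences $\alpha_a-\alpha_b-2l_b\in 2\Z_{\leq 0}$), you do get zero, but that computes a discontinuous specialization of the formula, not the holomorphic function $M^r_\alpha$ the lemma is about. The first fix (Zariski density) cannot rescue this: for any point of the hyperplane $\alpha_a-\alpha_b=2t$ and any $l_b\geq t$ the indeterminate $\infty\cdot 0$ form occurs, so there is no Zariski-dense subset of the hypothesis locus on which your term-by-term vanishing holds.

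Your second fix is the right idea — it is in fact what the paper does — but as sketched it is not yet a proof, and your attribution of the antisymmetry to "the factor $L(\alpha,\cdot)$" is misleading: $L(\alpha,z)$ is symmetric under $\sigma_{ab}$. In the paper the symmetry $M^{r+1}_\alpha=M^{r+1}_{\sigma_{ab}\alpha}$ comes from relabeling $l$ together with Weyl invariance (as in Lemma \ref{zeroresidues}), while the antisymmetry $M^{r+1}_{\sigma_{ab}\alpha}=-M^{r+1}_\alpha$ is obtained only after shifting $l_b\mapsto l_b+t$ to kill the $l_b<t$ terms, applying the $\Gamma_\R$ functional equation to surface a linear factor $(\alpha_b-\alpha_a)$, computing the residue at $\Gamma_\R(\alpha_b-\alpha_a-2l_a)$, and showing the two $\beta$-limits $\lim_{\beta\to 0}\beta^{-1}M^{r-1}_{\dots,\alpha_a+2l_a,\dots,\alpha_a+2l_b+\beta,\dots}$ and $\lim_{\beta\to 0}\beta^{-1}M^{r-1}_{\dots,\alpha_a+2l_a-\beta,\dots,\alpha_a+2l_b,\dots}$ agree (by twisting by $|\det|^\beta$). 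These steps are exactly what your plan omits, and they are where the inductive hypothesis is actually used in an essential way — to know $M^{r-1}$ has a zero of at least first order so the $\beta$-limit is well-defined — not merely to make summands vanish.
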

\begin{proof}
We prove by inducting on $r$. Note that the base case $r=1$ is proved in Lemma \ref{zeroresidues}. We assume the claim is true for $r\ge 1$. We consider the inner most sum of $M_\alpha^{r+1}$, which is
\begin{equation}\label{inner-most-sum-in-this-lemma}
    \sum_{l\in\Z^{s}_{\ge 0}}\left(\frac{a_{s+r}}{a_{s+r+1}}\right)^{\sum(\alpha)^s+2l}\frac{L(\alpha,(\alpha)^s+2l)}{c(\alpha)c(-\alpha)}M^{r}_{(\alpha)^s+2l,z}(a^{s+r-1}).
\end{equation}
We take a similar proof path as in Lemma \ref{zeroresidues}. Suppose that $\sigma_{ab}$ is the element in the Weyl group which only transposes $a$'th and $b$'th elements of $\alpha$. Clearly, $M^{r+1}_{\sigma_{ab}\alpha}=M^{r+1}_{\alpha}$, as before. We will show that when $\alpha_a-\alpha_b\in 2\Z$ then $M^{r+1}_{\sigma_{ab}\alpha}=-M^{r+1}_{\alpha}$ which will yield the claim.
We write the coefficient
\begin{align*}
    &\frac{L(\alpha,(\alpha)^s+2l)}{c(\alpha)c(-\alpha)}\\
    &=\frac{4\pi^{-2}\Gamma_\R(\alpha_a-\alpha_b-2l_b)\Gamma_\R(\alpha_b-\alpha_a-2l_a)(-1)^{l_a+l_b}}{\Gamma_\R(\alpha_a-\alpha_b)\Gamma_\R(\alpha_b-\alpha_a)\Gamma_\R(2l_a+2)\Gamma_\R(2l_b+2)} \frac{\prod_{i,j\notin \{a,b\}}L(\alpha_i,\alpha_j+2l_j)}{\prod_{(e\neq f)\neq(a,b),(b,a)}\Gamma_\R(\alpha_e-\alpha_f)}\\
    &\times\prod_{i\neq a,b}\Gamma_\R(\alpha_i-\alpha_a-2l_a)\Gamma_\R(\alpha_i-\alpha_a-2l_b)\prod_{j\neq a,b}\Gamma_\R(\alpha_a-\alpha_j-2l_j)\Gamma_\R(\alpha_b-\alpha_j-2l_j).
\end{align*}
Here $i$ and $j$ are varying over $1,\dots, r+s$ and $1,\dots, s$, respectively. Using the functional equation for $\Gamma_\R$ we obtain that the above is
\begin{align*}
    &=\frac{2\pi^{-3}(\alpha_b-\alpha_a)\Gamma_\R(\alpha_b-\alpha_a-2l_a)(-1)^{l_a}}{\Gamma_\R(\alpha_b-\alpha_a+2l_b+2)\Gamma_\R(2l_a+2)\Gamma_\R(2l_b+2)} \frac{\prod_{i,j\notin \{a,b\}}L(\alpha_i,\alpha_j+2l_j)}{\prod_{(e\neq f)\neq(a,b),(b,a)}\Gamma_\R(\alpha_e-\alpha_f)}\\
    &\times\prod_{i\neq a,b}\Gamma_\R(\alpha_i-\alpha_a-2l_a)\Gamma_\R(\alpha_i-\alpha_a-2l_b)\prod_{j\neq a,b}\Gamma_\R(\alpha_a-\alpha_j-2l_j)\Gamma_\R(\alpha_b-\alpha_j-2l_j).
\end{align*}
Without loss of generality, suppose that $a<b$ and $\alpha_a-\alpha_b=2t\ge 0$, and $t\in \Z$. As $M^r_\alpha=0$ by the induction hypothesis we note by the above computation that the summand vanishes for $l_b< t$. In the sum \eqref{inner-most-sum-in-this-lemma} we change variable $l_b\mapsto l_b+t$ and obtain that \eqref{inner-most-sum-in-this-lemma} equals to
\begin{align*}
    &\sum_{l\in\Z^{s}_{\ge 0}}\left(\frac{a_{s+r}}{a_{s+r+1}}\right)^{2t+\sum(\alpha)^s+2l} \times Q_l(\alpha)\times \frac{2\pi^{-3}(\alpha_b-\alpha_a)(-1)^{l_a}}{\Gamma_\R(2l_b+2)\Gamma_\R(2l_a+2)\Gamma_\R(2l_b+2t+2)}\\
    &\times\lim_{\alpha_a-\alpha_b\to 2t}\Gamma_\R(\alpha_b-\alpha_a-2l_a)M^{r}_{\dots,\alpha_a+2l_a,\dots,\alpha_b+2l_b+2t,\dots,z}(a^{s+r-1}),
\end{align*}
where 
\begin{align*}
    Q_l(\alpha):&=\frac{\prod_{i,j\notin \{a,b\}}L(\alpha_i,\alpha_j+2l_j)}{\prod_{(e\neq f)\neq(a,b),(b,a)}\Gamma_\R(\alpha_e-\alpha_f)}\\
    &\times\prod_{j\neq a,b}\Gamma_\R(\alpha_a-\alpha_j-2l_j)\Gamma_\R(\alpha_b-\alpha_j-2l_j)\prod_{i\neq a,b}\Gamma_\R(\alpha_i-\alpha_a-2l_a)\Gamma_\R(\alpha_i-\alpha_a-2l_b).
\end{align*}
We compute the last limit. Let $\beta:=\alpha_b-\alpha_a+2t$. Then the last limit equals to
\begin{align*}
    &\res_{s=-2t-2l_a}\Gamma_\R(s)\lim_{\beta\to 0}\beta^{-1}{M^{r}_{\dots,\alpha_a+2l_a,\dots,\alpha_a+\beta+2l_b,\dots,z}(a^{s+r-1})}\\
    &=\frac{2(-1)^{l_a+t}}{\pi\Gamma_\R(2l_a+2t+2)}\lim_{\beta\to 0}\beta^{-1}{M^{r}_{\dots,\alpha_a+2l_a,\dots,\alpha_a+\beta+2l_b,\dots,z}(a^{s+r-1})}.
\end{align*}
Thus we obtain that \eqref{inner-most-sum-in-this-lemma} equals to
\begin{align*}
    &\sum_{l\in\Z^{s}_{\ge 0}}\left(\frac{a_{s+r}}{a_{s+r+1}}\right)^{2t+\sum(\alpha)^s+2l} \times \frac{4\pi^{-4}(-1)^{t}Q_l(\alpha)}{\Gamma_\R(2l_b+2)\Gamma_\R(2l_a+2)\Gamma_\R(2l_b+2t+2)\Gamma_\R(2l_a+2t+2)}\\
    &\times (\alpha_b-\alpha_a)\lim_{\beta\to 0}\beta^{-1}M^{r}_{\dots,\alpha_a+2l_a,\dots,\alpha_a+2l_b+\beta,\dots,z}(a^{s+r-1}).
\end{align*}
Then doing a similar computation we obtain that the relevant sum as in \eqref{inner-most-sum-in-this-lemma} in the corresponding expression of $M_{\sigma_{ab}\alpha}$ equals to
\begin{align*}
    &\sum_{l\in\Z^{s}_{\ge 0}}\left(\frac{a_{s+r}}{a_{s+r+1}}\right)^{2t+\sum(\alpha)^s+2l} \times \frac{4\pi^{-4}(-1)^{t}Q_l(\alpha)}{\Gamma_\R(2l_b+2)\Gamma_\R(2l_a+2)\Gamma_\R(2l_b+2t+2)\Gamma_\R(2l_a+2t+2)}\\
    &\times(\alpha_a-\alpha_b)\lim_{\beta\to 0}\beta^{-1}M^{r}_{\dots,\alpha_a+2l_a-\beta,\dots,\alpha_a+2l_b,\dots,z}(a^{s+r-1}).
\end{align*}
Thus the proof will be complete if we can show that
$$\lim_{\beta\to 0}\beta^{-1}M^{r}_{\dots,\alpha_a+2l_a-\beta,\dots,\alpha_a+2l_b,\dots,z}(a^{s+r-1})=\lim_{\beta\to 0}\beta^{-1}M^{r}_{\dots,\alpha_a+2l_a,\dots,\alpha_a+2l_b+\beta,\dots,z}(a^{s+r-1}).$$
Note that both limits exist by the induction hypothesis ($M^r$ has a zero at a non-regular point).
Finally, the above equality follows from twisting the quantity in the limit by $|\det|^\beta$ and letting $\beta\to 0$.
\end{proof}

Finally, we prove the inductive step of the decomposition of $\V'$ in to $M$-Whittaker function, whose base case is proved in Lemma \ref{startdecomposition}.
\begin{lemma}\label{partialdecomposition}
Let $\mu\in E_n(\epsilon)$ for some $\epsilon>0$. Then there exists an absolute constant $\kappa_{s,n}$ such that,
$$\frac{1}{c(\mu)c(-\mu)}W'_\mu(a)=\kappa_{s,n}a_n^{\sum\mu}\sum_{\sigma\in S_n}M^{n-s}_{\sigma\mu}(a),$$
and $M_\mu$ is holomorphic in $\Re(\mu)\ge 0$.
\end{lemma}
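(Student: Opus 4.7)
The plan is to proceed by induction on $n-s$. The base case $n-s=1$ is precisely Lemma \ref{startdecomposition}. For the inductive step, I assume the formula for dimension $n-1$ (with the same pivot $s$) and derive it at dimension $n$.

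The starting point is the one-step Kontorovich--Lebedev--Whittaker identity \eqref{modifiedKLW}, which expresses
\begin{equation*}
W'_\mu(a) = \kappa_n a_n^{\sum\mu} \int_{(0)^{n-1}} W'_\nu(a^{n-1}/a_n) \frac{L(\mu,\nu)}{c(\nu)c(-\nu)} \, d\nu.
\end{equation*}
Substituting the inductive hypothesis for $W'_\nu(a^{n-1}/a_n)/(c(\nu)c(-\nu))$ produces a sum over $S_{n-1}$ inside the integral. Because $L(\mu,\nu)$ and $d\nu$ are both symmetric in the coordinates of $\nu$, each change of variable $\nu \mapsto \sigma'^{-1}\nu$ collapses that sum into $(n-1)!$ copies of a single integral:
\begin{equation*}
W'_\mu(a) = \kappa_{s,n} \, a_n^{\sum\mu} \int_{(0)^{n-1}} (a_{n-1}/a_n)^{\sum\nu} L(\mu,\nu) M^{n-1-s}_\nu(a^{n-1}/a_n) \, d\nu,
\end{equation*}
with the combinatorial factor absorbed into $\kappa_{s,n}$.

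The next step is to split $\nu = ((\nu)^s, z)$ with $(\nu)^s \in \C^s$ and $z \in \C^{n-1-s}$, factor $L(\mu,\nu) = L(\mu,(\nu)^s) L(\mu,z)$ (immediate from the definition of $L$), and shift each contour in $(\nu)^s$ successively to $+\infty$, mimicking the corresponding step in the proof of Lemma \ref{startdecomposition}. The crossed poles come from $L(\mu,(\nu)^s)$ and lie on the hyperplanes $\nu_j = \mu_i + 2k_j$; they produce residues indexed by tuples $((i_1,k_1),\dots,(i_s,k_s))$ with $i_j \in \{1,\dots,n\}$ and $k_j \in \Z_{\geq 0}$. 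By Lemma \ref{finalzeroresidues}, any tuple with a repeated $i_j$ gives a vanishing contribution, so only injections $i : \{1,\dots,s\} \hookrightarrow \{1,\dots,n\}$ survive. For each such injection, pick any extension $\sigma \in S_n$; after invoking the residue formula for $\Gamma_\R$ and the symmetry of $L(\mu,\cdot)$ and $c(\mu)c(-\mu)$ in the $\mu$-coordinates, the residue assembles precisely into the summand of the recursion \eqref{recursivem} that defines $M^{n-s}_{\sigma\mu}(a)$. Since $M^{n-s}_{\sigma\mu}$ is invariant under permutations of the last $n-s$ coordinates of $\sigma\mu$ (they are absorbed into the $z$-integration), summing over injections equals, up to a further factor of $(n-s)!$ absorbed into $\kappa_{s,n}$, summing over all of $S_n$. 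This is the claimed identity.

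The main obstacle is the rigorous justification of the contour shift, i.e.\ that the integrand on each shifted line $\Re(\nu_j) = 2N+1$ decays faster than any polynomial in $N$. Exactly as in Lemma \ref{startdecomposition}, Stirling's estimate on the $\Gamma$-factors in $L(\mu,(\nu)^s)$ furnishes a factor of order $1/(N!)^n$, which has to beat the polynomial growth from $1/(c(\nu)c(-\nu))$ (handled as in the proof of Lemma \ref{startdecomposition}) and from $M^{n-1-s}_{(\nu)^s,z}$ in its first $s$ arguments. The latter requires a polynomial-in-parameters bound on $M^{n-1-s}$, which one obtains by combining the inductive hypothesis with Lemma \ref{boundW} and direct estimates on the series defining the coefficients $P_k$. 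Holomorphicity of $M^{n-s}_\mu$ in $\Re(\mu)\geq 0$ will then follow from the recursion: the coefficient $L(\mu,(\mu)^s+2l)/(c(\mu)c(-\mu))$ is holomorphic by the same direct cancellation of $\Gamma$-factor poles verified for $P_k(\tau)$ in the proof of Lemma \ref{startdecomposition}, the inner $M^{n-1-s}$ is holomorphic by induction, and the $z$-integral and the $l$-series converge locally uniformly in $\mu$ by the estimates just described.
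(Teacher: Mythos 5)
Your proof is correct and takes essentially the same approach as the paper: the paper unrolls the iterated Kontorovich--Lebedev transform \eqref{multipleKLW} and processes the nested integrals from the inside out, whereas you package the same steps as a one-layer downward induction on the ambient dimension, but the key moves (collapsing the Weyl group sum by symmetry of $L$ and the contour measure, splitting $\nu = ((\nu)^s, z)$ and factoring $L$, shifting the $(\nu)^s$-contours to $+\infty$, invoking Lemma \ref{finalzeroresidues} to eliminate the non-regular residues, and reassembling via the recursion \eqref{recursivem}) are identical.
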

\begin{proof}
From the definition \eqref{recursivem} and estimates in Lemma \ref{boundofM} holomorphicity of $M_\mu$ will follow. To prove the the equality in the statement we will induct on $r$. The base case $r=1$ is proved in Lemma \ref{startdecomposition}. At $r$'th intermediate stage the expression of $W'_\mu(a)$ looks like 
\begin{align*}
    &\frac{1}{c(\mu)c(-\mu)} W'_\mu(a)\\
    &=\kappa_{s,r,n}a_n^{\sum\mu}\int_{}\left(\frac{a_{n-1}}{a_n}\right)^{\sum\nu}\frac{L(\mu,\nu)}{c(\mu)c(-\mu)}\dots
    \int_{(0)^{s+r}}\left(\frac{a_{s+r}}{a_{s+r+1}}\right)^{\sum\theta}\frac{L(\eta,\theta)}{c(\eta)c(-\eta)}M_\theta^r(a^{s+r})d\theta\dots d\nu\\
    &=\kappa_{s,r,n}a_n^{\sum\mu}\int_{}\left(\frac{a_{n-1}}{a_n}\right)^{\sum\nu}\frac{L(\mu,\nu)}{c(\mu)c(-\mu)}\dots
    \int_{(0)^{r}}\left(\frac{a_{s+r}}{a_{s+r+1}}\right)^{\sum\theta'}{L(\eta,\theta')}\\
    &\times \int_{(0)^{s}}\left(\frac{a_{s+r}}{a_{s+r+1}}\right)^{\sum(\theta)^s}\frac{L(\eta,(\theta)^s)}{c(\eta)c(-\eta)}M_\theta^r(a^{s+r})d(\theta)^s d\theta'\dots d\nu,
\end{align*}
where $\theta':=(\theta_{s+1},\dots,\theta_{s+r})$. Now we shift contours to infinity in the last integral. Thus, employing Lemma \ref{finalzeroresidues}, collecting residues we obtain for some constant $d'=d_{r,s}$ that
\begin{align*}
    &\frac{1}{c(\mu)c(-\mu)} W'_\mu(a)\\
    &=\kappa_{s,r,n}a_n^{\sum\mu}\int_{}\left(\frac{a_{n-1}}{a_n}\right)^{\sum\nu}\frac{L(\mu,\nu)}{c(\mu)c(-\mu)}\dots
    \int_{(0)^{r}}\left(\frac{a_{s+r}}{a_{s+r+1}}\right)^{\sum\theta'}{L(\eta,\theta')}\\
    &\sum_{\sigma\in S_{s+r}}\sum_{l\in Z^s_{\ge 0}}\left(\frac{a_{s+r}}{a_{s+r+1}}\right)^{\sum(\sigma\eta)^s+2l}\frac{L(\eta,(\sigma\eta)^s+2l)}{c(\eta)c(-\eta)}M_{(\sigma\eta)^s+2l,\theta'}^r(a^{s+r})d\theta'\dots d\nu.
\end{align*}
Noting the symmetries of the variables inside the integrals and recalling \eqref{recursivem} we can conclude.
\end{proof}

Now we will estimate the function $M^{n-s}$ inductively. The following lemma will be used to prove the required bound in Proposition \ref{heart} for those $a\in A$ which are in the complementary case of what we considered in Lemma \ref{notpop}, i.e. $a$ satisfies $\pop(s)$ (see definition in \eqref{POP}). We loosely mention that $M^{n-s}$-Whittaker functions will be exponentially increasing in $a_i/a_j$ for $1\le i\le s$ and $s+1\le j\le n$ (see \S\ref{sketch-n-ge-2}). This is exactly where we will be using $\pop(s)$ to control the increment. 
\begin{lemma}\label{boundofM}
Let $1\le s\le n$ and $a$ satisfies $\pop(s)$. Also let, $\mu\in\C^n$ such that $\Re(\mu)>0$ small enough. We define $\mu':=(\mu_1+2N,\dots,\mu_s+2N,\mu_{s+1},\dots,\mu_n)$ for some fixed large integer $N$. Then
$$M_{\mu'}^{(n-s)}(a)\prec_{N,\epsilon}\frac{d(\mu)^{O(1)}}{c(\Im(\mu))}\frac{a_1^{2N}}{a_n^{2sN}}.$$
Here $O(1)$ in the exponent of $d(\mu)$ denotes a bounded constant depending on $N$ and $n$.
\end{lemma}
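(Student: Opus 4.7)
I would prove this by induction on $r := n - s \geq 1$, handling the base case by a direct term-by-term estimate of the defining power series in \eqref{definem} and the induction step by substitution into the recursion \eqref{recursivem}.

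For the base case $r = 1$ (so $n = s + 1$), we start from
\[
    M^1_{\mu'}(a) = \sum_{k \in \Z^s_{\geq 0}} P_k(\mu')\, W'_{(\mu')^s + 2k}(a^s/a_{s+1}).
\]
Since $a$ satisfies $\pop(s)$, each ratio $a_i/a_{s+1}$ for $1 \leq i \leq s$ lies in $(0,1]$. I would apply Lemma \ref{boundW} with the permutation $\sigma \in S_s$ that sorts the real parts of $(\mu')^s + 2k$ in increasing order (which, since $\Re\mu$ is small, is determined by the ordering of the $k_i$), yielding
\[
    W'_{(\mu')^s + 2k}(a^s/a_{s+1}) \prec \lvert c(1, -\sigma((\mu')^s + 2k))\rvert \prod_{i=1}^s (a_i/a_{s+1})^{\Re \mu_{\sigma(i)} + 2N + 2k_{\sigma(i)}}.
\]
Since each $a_i/a_{s+1} \leq 1$, bounding by the worst-case factor extracts an $(a_1/a_{s+1})^{2N}$ along with an overall exponent $2sN$ on the denominator. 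The coefficient $P_k(\mu')$, read off from the explicit rearrangement in the proof of Lemma \ref{zeroresidues}, contains a product of $\Gamma_\R(2k_j + 2)^{-1}$ terms (giving super-factorial decay in $|k|$), Gamma ratios of polynomial size in $d(\mu)$ and $|k|$, and a factor of $c(\Im\mu)^{-1}$. The series then converges absolutely to a bound of the desired form, noting $a_n = a_{s+1}$ here.

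For the inductive step, assume the claim for $r - 1$ and substitute into \eqref{recursivem}. The truncated vector $a^{s+r-1}$ still satisfies $\pop(s)$, and the parameter $((\mu')^s + 2l, z) \in \C^{s+r-1}$ agrees with $((\mu)^s, z)$ shifted by $2(N + l_i)$ in its first $s$ coordinates, so the induction hypothesis (with effective shift at least $N$) gives
\[
    \lvert M^{r-1}_{(\mu')^s + 2l, z}(a^{s+r-1})\rvert \prec d(\mu)^{O(1)} c(\Im\mu)^{-1} c(\Im z)^{-1} \cdot a_1^{2N}/a_{s+r-1}^{2sN}
\]
up to polynomial-in-$|l|$ factors. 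I would then carry out the $z$-integral via Stirling-type bounds on $L(\mu', z)$, which produce exponential decay in $|\Im z|$ dominating $c(\Im z)^{-1}$ and the bounded factor $(a_{s+r-1}/a_{s+r})^{\sum z}$ on a suitably shifted contour. The $l$-summation converges absolutely by the super-factorial decay of $\prod_j \Gamma_\R(2l_j + 2)^{-1}$ hidden in $L(\mu', (\mu')^s + 2l)/\bigl(c(\mu') c(-\mu')\bigr)$. The factor $(a_{s+r-1}/a_{s+r})^{\sum (\mu')^s + 2l}$ contributes $(a_{s+r-1}/a_{s+r})^{2sN}$ (the $l$-dependent part being absorbed into the sum), which combines with $a_{s+r-1}^{-2sN}$ from the induction hypothesis to telescope into $a_{s+r}^{-2sN}$. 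Iterating $n-s$ times recovers the claimed $a_1^{2N}/a_n^{2sN}$.

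The main obstacle is the simultaneous bookkeeping in the inductive step: $\pop(s)$ does not directly compare $a_{s+r-1}$ and $a_{s+r}$, so the sign of the exponent of their ratio can go either way, forcing a careful choice of $z$-contour (possibly split by cases according to whether $a_{s+r-1}/a_{s+r} \leq 1$ or $> 1$) so that the integrand remains both bounded and integrable while the $l$-sum convergence is preserved. Additionally, each inductive step must absorb polynomial-in-$|l|$ contributions into the Gamma-denominator factorial decay and incur only bounded loss in the exponent of $d(\mu)$, so that after $n - s$ iterations the accumulated polynomial power in $d(\mu)$ stays within the $O(1)$ advertised in the statement.
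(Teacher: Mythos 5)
Your overall architecture matches the paper's: induction on $r = n - s$, with the base case handled term-by-term via Lemma \ref{boundW} and the $\pop(s)$ assumption, and the inductive step by substituting the hypothesis into \eqref{recursivem}. You also correctly flag that $\pop(s)$ does not order $a_{s+r-1}$ against $a_{s+r}$ — that is indeed the delicate point. But there is a gap in how you propose to handle it, and the proposed $z$-contour splitting does not address the real obstruction.

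The problem lies in the $l$-sum, not the $z$-integral. You state the inductive hypothesis in the rigid form $a_1^{2N}/a_{s+r-1}^{2sN}$. After multiplying by the prefactor $(a_{s+r-1}/a_{s+r})^{\Re\sum(\mu')^s + 2\|l\|_1}$ and cancelling the $l$-independent piece, what survives is a factor $(a_{s+r-1}/a_{s+r})^{2\|l\|_1}$ inside the $l$-sum. You say this is ``absorbed into the sum,'' but $\sum_{l}\prod_i\frac{\pi^{l_i}}{l_i!}(a_{s+r-1}/a_{s+r})^{2\|l\|_1}\asymp\exp\bigl(s\pi(a_{s+r-1}/a_{s+r})^2\bigr)$, which is unbounded in the regime $a_{s+r-1} > a_{s+r}$ that $\pop(s)$ permits. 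Shifting the $z$-contour only affects $(a_{s+r-1}/a_{s+r})^{\sum z}$, whose modulus on $\Re(z)=0$ is already $1$; it cannot compensate the $l$-sum.

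The correct remedy is to carry a \emph{finer} inductive hypothesis that retains the full parameter-dependent exponent, roughly $M^{r}_\alpha(a^{s+r}) \prec [\text{coeff}]\prod_{i=1}^s (a_i/a_{s+r})^{\Re\iota((\alpha)^s)_i}$, where $\iota$ is the increasing reordering. Applied to the parameter $((\mu')^s + 2l, z)$, the total exponent on $a_{s+r-1}$ in the bound is $-\Re\sum(\mu')^s - 2\|l\|_1$, which cancels the prefactor's exponent $+\Re\sum(\mu')^s + 2\|l\|_1$ exactly, leaving $\prod_i(a_i/a_{s+r})^{\Re\iota((\mu')^s+2l)_i}$. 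Only now do you discard the $l$-dependence: since $a_i/a_{s+r}\le 1$ by $\pop(s)$, and sorted order statistics are monotone under non-negative entrywise shifts (so $\Re\iota((\mu')^s+2l)_i \ge \Re\iota((\mu')^s)_i$ for each $i$), one has $\prod_i(a_i/a_{s+r})^{\Re\iota((\mu')^s+2l)_i} \le \prod_i(a_i/a_{s+r})^{\Re\iota((\mu')^s)_i}$, which is $l$-independent; the $l$-sum then contributes only the absolutely convergent Gamma-ratio coefficients. This is exactly the move that makes the base case work (where the ratio is $a_i/a_{s+1}\le 1$), and the point is that it must be performed \emph{after} the telescoping so that the exponents cancel, not before. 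Your fixed-$2sN$ hypothesis throws this information away one step too early.
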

\begin{proof}
We prove by inducting on $r$ using the inductive definition in \eqref{recursivem}. Note that it is enough to prove the required bound of $M^r_\alpha$ for $\alpha:=(\mu_1+2N_1,\dots,\mu_s+2N_s,\alpha')$ where $N_i\ge N$ and $\alpha'\in \C^{r}$ with small positive real parts. For $\alpha\in \C^r$, by $\iota({\alpha})$ we will denote the reordering of the coordinates of $\alpha$ such that $\Re(\iota({\alpha})_1))\le\dots\le \Re(\iota({\alpha})_r)$. We will frequently use Stirling approximation, and also for $s\in\C$ having small real part and $k\ge 0$
$$|\Gamma(s-k)|\ll |\Gamma(s)|.$$

Let us first prove the claimed bound for $M^1$. From the definition \eqref{definep}, for $\tau\in \C^{s+1}$ with $\Re(\tau_j)\ge 2N$ for $1\le j\le s$, we estimate 
\begin{align*}
    &c(1,-\iota({(\tau)^s+2k}))P_{k}(\tau):
    =\frac{L(\tau,(\tau)^s+2k)c(1,-\iota({(\tau)^s+2k}))}{c(\tau)c(-\tau)c((\tau)^s+2k)c(-(\tau)^s-2k)}\\
    &\asymp \prod_{i=1}^s\frac{\pi^{k_i}}{k_i!}\frac{\Gamma_\R(\tau_{s+1}-\tau_i-2k_i)}{\Gamma_\R(\tau_{s+1}-\tau_i)\Gamma_\R(\tau_i-\tau_{s+1})}\frac{\prod_{1\le i\neq j\le s}\Gamma_\R(\tau_i-\tau_j-2k_j)}{\prod_{i\neq j}\Gamma_\R(\tau_j-\tau_i)c(\iota({(\tau)^s+2k}))}\frac{c(1,-\iota({(\tau)^s+2k}))}{c(-\iota({(\tau)^s+2k}))}.
\end{align*}
The last factor in the above quantity can be bounded by $(1+\|k\|)^{O(1)}d((\tau)^s)^{O(1)}$. The second factor is $\ll |\Gamma_\R(\tau_i-\tau_{s+1})|^{-1}$. In the third factor, suppose that $\Gamma_\R(\tau_p+2k_p-\tau_q-2k_q)$ appears in $c(\iota({(\tau)^s+2k}))$, for some $1\le p\neq q\le s$. Then we note that
$$\frac{\Gamma_\R(\tau_p-\tau_q-2k_q)\Gamma_\R(\tau_q-\tau_p-2k_p)}{\Gamma_\R(\tau_p-\tau_q)\Gamma_\R(\tau_q-\tau_p)\Gamma_\R(\tau_p+2k_p-\tau_q-2k_q)}\ll|\Gamma_\R(\tau_q-\tau_p)|^{-1}.$$
We can do similar estimate for each sub-factor in the third factor. If we define 
$$\tilde{c}(\tau):=\prod_{i=1}^{s}\Gamma_\R(\tau_i-\tau_{s+1})\prod_{1\le i\neq j\le s}\min\{|\Gamma_\R(\tau_i-\tau_j)|,|\Gamma_\R(\tau_j-\tau_i)|\},$$
then we have obtained that
$$c(1,-\iota({(\tau)^s+2k}))P_k(\tau)\ll |\tilde{c}(\tau)|^{-1}|\prod_{i=1}^s\frac{\pi^{k_i}}{k_i!}(1+\|k\|)^{O(1)}d(\tau)^{O(1)}.$$
Finally using \eqref{definem} and Lemma \ref{boundW} we estimate
\begin{align*}
    M_\tau(a^{s+1})
    &=\sum_{k\in \Z^s_{\ge 0}}P_k(\tau)W'_{(\tau)^s+2k}(a^s/a_{s+1})\\
    &\prec \frac{d(\tau)^{O(1)}}{\tilde{c}(\tau)}\frac{a_1^{2N}}{a_{s+1}^{\Re\sum(\tau)^s}}\sum_{k\in \Z^s_{\ge 0}}\frac{\pi^{k_i}}{k_i!}(1+\|k\|)^{O(1)}\\
    &\ll \frac{d(\tau)^{O(1)}}{\tilde{c}(\tau)}\frac{a_1^{2N}}{a_{s+1}^{\Re\sum(\tau)^s}},
\end{align*}
where in the first inequality we have employed $\pop(s)$ assumption.

Now we make an inductive hypothesis that for $r\ge 1$
$$M^{r}_\alpha(a^{s+r})\prec \frac{d(\alpha)^{O(1)}}{\tilde{c}(\alpha)}\frac{a_1^{2N}}{a_{r+s}^{\Re\sum(\alpha)^s}},$$
where motivated by the previous computation we define
$$\tilde{c}(\alpha):=\prod_{i=1}^{s}\prod_{j=1}^r\Gamma_\R(\alpha_i-\alpha_{s+j})\prod_{1\le i<j\le r}\Gamma_\R(\alpha_{s+i}-\alpha_{s+j})\prod_{1\le i\neq j\le s}\min\{|\Gamma_\R(\alpha_i-\alpha_j)|,|\Gamma_\R(\alpha_j-\alpha_i)|\}.$$
We start with \eqref{recursivem} and integrate term by term. We claim that
\begin{align*}
&\frac{L(\alpha,(\alpha)^s+2l)}{c(\alpha)c(-\alpha)}\int_{z}\left(\frac{a_{s+r}}{a_{s+r+1}}\right)^{\sum z}L(\alpha,z)\frac{d((\alpha)^s+2l)^{O(1)}}{\tilde{c}((\alpha)^s+2l,z)}d(z)^{O(1)}\\
&\prec \frac{a_1^{2N}}{a_{s+r}^{\Re\sum(\alpha)^s+2l}}|\tilde{c}(\alpha)|^{-1}\prod_{i=1}^s\frac{\pi^{l_i}}{l_i!}(1+\|l\|)^{O(1)}d((\alpha)^s)^{O(1)}d(\alpha_{s+1},\dots\alpha_{s+r+1})^{O(1)}.
\end{align*}
Employing the inductive hypothesis we can yield the proof as in the base case. 

To prove the claim we note that,
\begin{align*}
    &\frac{L(\alpha,(\alpha)^s+2l)L(\alpha,z)}{c(\alpha)c(-\alpha)\tilde{c}((\alpha)^s+2l,z)}\prod_{i=1}^{s}\prod_{j=1}^{r+1}\Gamma_\R(\alpha_i-\alpha_{s+j})\prod_{1\le i<j\le r+1}\Gamma_\R(\alpha_{s+i}-\alpha_{s+j})\\
    &\asymp\prod_{i=1}^s\frac{\pi^l_i}{l_i!}\frac{\prod_{i=1}^{r+1}\prod_{j=1}^r{\Gamma_\R(\alpha_{s+i}-z_j)}}{c(z)\prod_{1\le i<j \le r+1}\Gamma_\R(\alpha_{s+j}-\alpha_{s+i})}\prod_{i=1}^s\prod_{j=1}^{r}\frac{\Gamma_\R(\alpha_i-z_j)}{\Gamma_\R(\alpha_i+2l_i-z_j)}\\
    &\hphantom{rubbisrubbish}\times\frac{\prod_{1\le i\neq j\le s}\Gamma_\R(\alpha_i-\alpha_j-2l_j)}{c((\alpha)^s)\tilde{c}((\alpha)^s+2l)}\prod_{i=1}^{r+1}\prod_{j=1}^s\frac{\Gamma_\R(\alpha_{s+i}-\alpha_j-2l_j)}{\Gamma_\R(\alpha_{s+i}-\alpha_j)}.
\end{align*}
Note that, as in the $r=1$ case, the fifth and third factors of the last quantity are $\ll 1$, and the fourth factor is $\ll \tilde{c}((\alpha)^s)$. Thus it will be enough to prove that the integral 
$$\int_{(0)^r}\left|\frac{\prod_{i=1}^{r+1}\prod_{j=1}^r{\Gamma_\R(\alpha_{s+i}-z_j)}}{c(z)\prod_{1\le i<j \le r+1}\Gamma_\R(\alpha_{s+j}-\alpha_{s+i})}\right|d(z)^{O(1)}dz\ll d(\alpha_{s+1},\dots\alpha_{s+r+1})^{O(1)},$$
as this, along with
$$d((\alpha)^s+2l)^{O(1)}\ll d((\alpha)^s)^{O(1)}(1+\|l\|)^{O(1)},$$
proves the claim. To see the estimate of the last integral we follow the same path as in \cite[Proposition $1$]{B1}. We may assume that $0<\Re(\alpha_{s+j})<\epsilon$. We write $\alpha_{s+j}=\Re(\alpha_{s+j})+i\beta_j$ and $z_j=it_j$. We use Stirling approximation to obtain that the integral is bounded by
$$d(\beta)^{O(1)}\int_{\R^r}{\prod_{i,j}(1+|\beta_i-t_j|)^{-1/2+O(\epsilon)}}\prod_{i\neq j}|t_i-t_j|^{1/2}d(t)^{O(1)}E(t,\beta)dt,$$
where $E$ is the exponential factor given by
$$E(t,\beta):=\exp\left[-\frac{\pi}{4}\left(\sum_{i,j}|\beta_i-t_j|-\sum_{i< j}|t_i-t_j|-\sum_{i<
j}|\beta_i-\beta_j|\right)\right].$$
By fixing an order among $\beta_i$ it can be checked elementarily that (as in the proof of  \cite[Proposition $1$]{B1}) the quantity inside $\exp$ is always non-positive. Thus the essential supports of $t$ in this integral are bounded by polynomials in $\beta$. The integrand, other than the exponential factor, also being a polynomial in $t$ and $\beta$, the integral is bounded by some polynomial in $\beta$, thus is $d(\beta)^{O(1)}$.

Finally, we conclude the proof by noting that 
$$\tilde{c}(\mu)\gg c(\Im(\mu))d(\mu)^{O(1)},$$
where $O(1)$ in the exponent means a fixed non-negative exponent depending on $N$.
\end{proof}

We finally have all the ingredients we need to prove Proposition \ref{heart}. We will prove this by dividing the argument into two cases: whether $a$ has the property $\pop(s)$ for some $1\le s\le n$, or not.
\begin{proof}[Proof of Proposition \ref{heart}]
Main ingredient of the proof is to shift contours in the integral of \eqref{finalreducespherical}. We will divide the proof into two cases.

\textbf{Case I:} We assume that $a$ does not satisfy $\pop(s)$ for any $1\le s\le n$. Let $d_0(\mu)$ be the eigenvalue of $W_\mu$ under $\D$ (recall Definition \eqref{defdiffop}). We have $d_0(it)\ll d(it)$ which follows from \eqref{defn-d-mu} and Lemma \ref{diffop}. We see that, using \eqref{boundG} for $M=0$, Lemma \ref{boundinnerproduct} for $R=0$, and Lemma \ref{notpop} the RHS of \eqref{finalreducespherical} is bounded by
$$\D^l\We\left[\begin{pmatrix}C(\Pi)&\\&a\end{pmatrix}w\right]\prec_{N,p} \delta^{1/2}(a)\min(1,a_1^N)\int_{\R^n}{d(it)^{l-p}}\frac{|c(1,it)|^2}{|c(it)|^2}dt.$$
Using Stirling, $\frac{c(1,it)}{c(it)}\ll d(it)^{l'}$ for some absolute $l'$. Thus the integral in the RHS above is convergent if $p$ is sufficiently large. Hence proof of this case concludes.

\textbf{Case II:} We assume that $a$ satisfies $\pop(s)$ for some given $s$. We use Lemma \ref{partialdecomposition} in the RHS of \eqref{finalreducespherical}. We exchange the finite sum with integral over $\mu$ and change variable $w\mu\mapsto \mu$. We obtain that for some explicit constant $\kappa_n$
$$\D^l\We\left[\begin{pmatrix}C(\Pi)&\\&a\end{pmatrix}w\right]
=\kappa_n\delta^{1/2}(a)\int_{(0)^n}a_n^{\sum\mu}\Theta(\mu,\Pi)d_0(\mu)\langle f,W_\mu\rangle M^{n-s}_\mu(a){d\mu}.$$
Now we shift contour of $\mu_i\mapsto\mu_i+2N$ for $1\le i\le s$ for some natural number $N$. Note that the integrand does not cross any pole. Employing Lemma \ref{boundofM}, bound of $\Theta$ in \eqref{boundG}, and Lemma \ref{boundinnerproduct} with $R=nN+1$ we conclude that
$$\D^l\We\left[\begin{pmatrix}C(\Pi)&\\&a\end{pmatrix}w\right]\prec_{N,p} \delta^{1/2}(a)a_1^{2N}\int_{\R^n}\frac{d(it)^{O(1)}}{d(it)^p}\frac{|c(1,it)|}{|c(it)|}dt,$$
where $O(1)$ in the exponent of $d(it)$ depends at most on $l,N$. We argue as in the previous case and thus conclude.
\end{proof}

\subsection{Remarks on the sphericality assumption of the chosen newvector}\label{sphericality-unnecessary}
Here we take the opportunity to say a few words about the choice of the newvector in \eqref{newvec}. It is mostly motivated by the construction of the newvectors in \cite{JPSS1, M}. We note that in the non-archimedean case, the newvectors are spherical in the Kirillov model,
i.e., $\GL_{n}(\mathbb{Z}_p)$-invariant.
We analogously choose
our analytic newvectors
to be $\O(n)$-invariant,
but this feature of our construction
is not essential. The main purpose of this assumption is to make the presentation and proof of Proposition \ref{heart} a little simpler. In fact, any $f\in C^\infty_c(N\backslash G,\psi)$ would serve the purpose as \eqref{newvec} does. We briefly describe about the essential modifications needed to carry out
in the case when $f$ is not spherical.

One only needs to modify the proof of Proposition \ref{heart}, because the sphericality of $f$ has been used only in this proof. If we do not choose $f$ in \eqref{newvec} to be spherical then for the Whittaker--Plancherel expansion, we must use \eqref{genwhitplan} instead of \eqref{spherwhitplan}, hence \eqref{finalreducespherical} changes to (say, for $l=0$) 
\begin{equation*}
\int_{\hat{G}}\langle \lambda(g)f,j_\pi\rangle d\mu_p(\pi),
\end{equation*}
where $\lambda(g)f(h):=f(hg)$. Here $j_\pi$ is the relative character of $\pi$, also known as the (long Weyl) Bessel distribution attached to $\pi$, which is defined as a distribution on $G$ by
$$j_\pi(g):=\sum_{W\in \B(\pi)}W(g)\overline{W(1)}.$$
Here $\B$ is an orthonormal basis of
the Whittaker model of $\pi$. A reference on the Bessel distribution can be found in \cite{CPS, J2, BM}.
Recalling \eqref{relative-trace} we have that
$$J_\pi(f)=\int_{G} f(g) j_\pi(g) dg.$$
It can be proved, using spectral analysis, that $j_\pi$ satisfies the following recursion (compare with \eqref{modifiedKLW}):
\begin{equation}\label{recursion-j-bessel}
    j_\pi\left[\begin{pmatrix}1&\\&g\end{pmatrix}w\right]=\int_{\widehat{\GL}_{n-1}(\R)}\gamma(1/2,\pi\otimes \bar{\sigma})\omega_\sigma(-1)^{n-1}j_\sigma(gw')d\mu_p(\sigma),
\end{equation}
where $\omega_\sigma$ is the central character of $\sigma$ and $j_\sigma$ is the Bessel distribution attached to $\sigma$. A decomposition of $j_\pi$, analogous to the decomposition of the spherical Whittaker function, can be obtained using \eqref{recursion-j-bessel} (for $\GL(2)$ see, for instance, \cite[chapter $6$]{CPS}).

We should also point out that the Plancherel density in this case is not holomorphic, unlike in the spherical case. This is because the Plancherel density is a ratio of local $L$-factors which have poles. In the spherical case the numerator of the density gets cancelled with the $L^2$-norm of the spherical Whittaker function. So in the general case, while we shift contour we will likely to cross some polar hyperplanes coming from the Plancherel density. However, the residues will cancel with some part of the integral over $\pi$. For instance, in $\GL(2)$ the Plancherel integral over $\widehat{\GL(2)}$ can be decomposed as a sum of contour integrals over principal series and sum over discrete series. It can be checked that such residues from the integrals over the principal series will cancel out some summands in the sum over discrete series. This analysis will be detailed in an upcoming work by the first named author.

\section{Multiplicity One}\label{sec:proof-multiplicity-one}
Let $\We_\phi\in \Pi$ be a vector such that $\We_\phi$ in the Kirillov model is given by 
$$\We_\phi\left[\begin{pmatrix}g&\\&1\end{pmatrix}\right]:=\int_N \phi(ng)\overline{\psi(n)}dn,$$
where $\phi\in C_c^\infty(G)$, is sufficiently concentrated around the identity and $L^1$ normalized.

Recall the conventions on $\tau$ as in the beginning of \S\ref{sec:reduction}. Let $f^0_X$ be a
normalized majorant of $K_0(X,\tau)$. Let $f_1\in C_c^\infty(G)$ be a fixed $L^1$ normalized non-negative function supported around the identity. We also define
\begin{equation}\label{char-func-new-sub}
    f_X(g):=\int_G f^0_X\left[g\begin{pmatrix}u&\\&1\end{pmatrix}\right]f_1(u)du.
\end{equation}
Working as in Lemma \ref{convolution-normalized-majorant} implies 
we can see that
that $f_X$ is again a normalized majorant
of $K_0(X,\tau)$. Finally, for $h\in A\times K$ we define
\begin{equation*}
    f_X^h(g):=f_X\left[\begin{pmatrix}h^{-1}\\&1\end{pmatrix}g\begin{pmatrix}h\\&1\end{pmatrix}\right]
\end{equation*}
Here $A\times K$ is realized as a fundamental domain of $N\backslash G$ and we equip $A\times K$ with $dh$ which is induced from a $G$-invariant measure on $N\backslash G$.

\begin{prop}\label{estimating-projection-shrinking-support}
Let $U$ be a sufficiently small neighborhood of the identity element of $G$. Let $\phi \in C_c^\infty(U)$ with $\|\phi\|_{L^1(G)} \leq 1$. Let $\Pi$ be a generic irreducible unitary $\theta$-tempered representation of $\GL_{n+1}(\R)$. Then
$$\int_{A\times K}\Pi(f^h_X)\We_\phi(1)dh\ll_{\tau,\sigma} (C(\Pi)/X)^{-n\sigma},$$
for any $0<\sigma<1/2-\theta$ where the implied constant is independent of $\phi$.
\end{prop}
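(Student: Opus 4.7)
The approach is to swap the order of integration, recast the result as a spectral integral via the $\GL_{n+1}\times\GL_n$ local functional equation, and then shift contours as in the proof of Proposition~\ref{heart}.

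By the substitution $g\mapsto h_1 g h_1^{-1}$ in the definition of $\Pi(f^h_X)\We_\phi(1)=\int f_X(h_1^{-1}gh_1)\We_\phi(g)\,dg$ (Haar measure on $\GL_{n+1}(\R)$ is preserved under conjugation) and Fubini,
\[
\int_{A\times K}\Pi(f^h_X)\We_\phi(1)\,dh = \int_{\GL_{n+1}(\R)} f_X(u)\,\Psi_\phi(u)\,du,\qquad \Psi_\phi(u):=\int_{A\times K}\We_\phi(h_1 u h_1^{-1})\,dh.
\]
The right-$G$-invariance of the measure on $N\backslash G$ ensures that $\Psi_\phi$ is invariant under $u\mapsto h_1 u h_1^{-1}$ for all $h\in G$, which hints at a spectral description.

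The technical core would be to rewrite the above double integral in the ``spectral form'' of \eqref{finalreducespherical}, namely as
\[
\int_{\hat{G}}\Theta(\pi,\Pi)\,\langle\phi,W_\pi\rangle\,\widetilde{Z}_\pi(f_X)\,d\mu_p(\pi),
\]
where $\widetilde{Z}_\pi(f_X)$ is a Rankin--Selberg type inner zeta integral of $f_X$ against $W_\pi$. This is to be achieved by combining the Whittaker--Plancherel formula \eqref{genwhitplan} on $G$ with the $\GL_{n+1}\times\GL_n$ local functional equation \eqref{lfe}, exploiting the $G$-conjugation-invariance of $\Psi_\phi$; this step is the main obstacle, as it demands carefully interleaving the two tools with the orbit structure of $G$ on $\GL_{n+1}(\R)$ by conjugation.

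Once the spectral form is in hand, the bound would follow by a contour shift in the spirit of Proposition~\ref{heart}. The $\theta$-temperedness of $\Pi$ ensures that $\Theta(\pi,\Pi)$ is holomorphic for $\Re(\mu_i)\ge 0$, so each coordinate of the spectral parameter may be shifted by $\sigma$ with $0<\sigma<1/2-\theta$. The bound \eqref{boundG} on $\Theta(\pi+2\sigma,\Pi)$ contributes a factor of $C(\Pi)^{-n\sigma}$, while the concentration of $f_X$ on $K_0(X,\tau)$ yields an offsetting factor of $X^{n\sigma}$ in $\widetilde{Z}_\pi(f_X)$; together these give $(C(\Pi)/X)^{-n\sigma}$. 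The dependence on $\phi$ is absorbed by Lemma~\ref{boundinnerproduct} applied with $\phi$ in place of $f$ there, and the $L^1$-normalization $\|\phi\|_{L^1}\le 1$ ensures uniformity in $\phi$, completing the proof.
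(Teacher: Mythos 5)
Your high-level strategy (Whittaker--Plancherel plus the $\GL_{n+1}\times\GL_n$ local functional equation, then a contour shift exploiting $\theta$-temperedness to trade $C(\Pi)^{-n\sigma}$ for an $X^{n\sigma}$ coming from the concentration of $f_X$) is the same as the paper's. But the execution has several genuine gaps and one concrete error.

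First, the conjugation-invariance of $\Psi_\phi$ is not established. You argue via right-invariance of the quotient measure on $N\backslash G$, but the integrand $h\mapsto \We_\phi\bigl[\begin{pmatrix}h&\\&1\end{pmatrix}u\begin{pmatrix}h^{-1}&\\&1\end{pmatrix}\bigr]$ is not left-$N$-invariant: replacing $h$ by $nh$ produces $\tilde\psi(n)\We_\phi\bigl[\begin{pmatrix}h&\\&1\end{pmatrix}u\begin{pmatrix}h^{-1}&\\&1\end{pmatrix}\begin{pmatrix}n^{-1}&\\&1\end{pmatrix}\bigr]$, and the right factor does not simplify. So the integral over the fundamental domain $A\times K$ does not descend to $N\backslash G$, and the substitution $h\mapsto h(h')^{-1}$ that you invoke would change the fundamental domain. (Contrast this with the trace integrand in \eqref{trace-limit-operation}, where the relevant integrand \emph{is} left-$N$-invariant because the sum over a basis absorbs the unitary twist; that invariance does not hold for your $\Psi_\phi$ taken in isolation.) In any case, nothing in your subsequent argument uses this invariance, so it is not a fatal gap, but the claim as stated is false.

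Second, and more importantly, the crux of the proof is deferred rather than carried out: you write that rewriting $\int f_X(u)\Psi_\phi(u)\,du$ in the spectral form ``is the main obstacle''. The paper's argument does not proceed by a single spectral description of $\Psi_\phi$; it keeps $h$ fixed, expresses $\Pi(f_X^h)\We_\phi(1)$ via Iwahori coordinates on $\GL_{n+1}$, applies Whittaker--Plancherel and the LFE \emph{twice} (once for the $g$-integral and once again inside the resulting $\tilde V_{\Pi,W,h}(t)$), and only at the very end integrates over $h\in A\times K$. The offsetting $X^{n\sigma}$ comes from the change of variable $c\mapsto c/X$ in the Iwahori coordinate and the resulting oscillating factor $e(C(\Pi)cw't^{-1}e_1/X)$, integrated by parts against the decay of $\tilde V_{\Pi,W,h}$ established via Lemma~\ref{whittaker-bound}. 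None of this mechanism is reproduced in your sketch, so the claimed conclusion does not follow from what you have written.

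Third, the proposed handling of $\phi$-dependence is incorrect. Lemma~\ref{boundinnerproduct} takes as input a function in $C_c^\infty(N\backslash G,\psi)$, whereas $\phi\in C_c^\infty(G)$; you cannot ``apply it with $\phi$ in place of $f$''. The paper controls the $\phi$-dependence differently: after the second application of Whittaker--Plancherel it encounters an integral of the form $\int_G\int_G f_1(t_1)\phi(t_2)W'(t_2ht_1)\,dt_1\,dt_2$, bounds $W'$ by Lemma~\ref{whittaker-bound}, and then uses the hypothesis $\|\phi\|_{L^1}\le 1$ (together with the compact support of $\phi$ and $f_1$ near the identity) to get a bound uniform in $\phi$. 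You would need to replicate something of that nature.

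In short, the route you outline is in the right general direction but the proposal does not actually prove the estimate: the key step is explicitly left open, the invariance claim you advance is incorrect, and the cited lemma does not apply in the way claimed.
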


\begin{proof}
Let us abbreviate
$$\We_h(g):=\int_{G}f_1(u)\We_\phi\left[g\begin{pmatrix}u^{-1}h^{-1}&\\&1\end{pmatrix}\right]du.$$
Then we can write
$$\Pi(f^h_X)\We_\phi(1)=\int_G f_X^0(g)\We_h\left[\begin{pmatrix}h&\\&1\end{pmatrix}g\right]dg.$$
We use the Iwahori coordinates
$$g=z\begin{pmatrix}I&b\\&1\end{pmatrix}\begin{pmatrix}u&\\&1\end{pmatrix}\begin{pmatrix}I&\\c&1\end{pmatrix}, b,c^t\in\R^n,z\in\R^\times,u\in G,\quad dg=d^\times z\frac{du}{|\det(u)|}dbdc,$$
and do a change of variable in the above integral representation of $\Pi(f_X^h)\We_\phi(1)$ as
$$X^{-n}\int_{\R^{n}}\int_{\R^{n}}\int_{ G}\int_{\R^\times}f^0_X\left[z\begin{pmatrix}I&b\\&1\end{pmatrix}\begin{pmatrix}u&\\c/X&1\end{pmatrix}\right]\omega_\Pi(z) e(e_n^thb)\We_h\left[\begin{pmatrix}hu&\\c/X&1\end{pmatrix}\right]d^\times z\frac{du}{|\det(u)|}dbdc.$$
We use Whittaker--Plancherel expansion \eqref{genwhitplan} and the local functional equation \eqref{lfe}, as before, to write
\begin{multline*}
    \We_h\left[\begin{pmatrix}hu&\\c/X&1\end{pmatrix}\right]=\int_{\hat{G}}\gamma(1/2-\sigma,\Pi\otimes\bar{\pi})^{-1}C(\Pi)^{n\sigma}\omega_{\bar{\pi}}((-1)^{n}C(\Pi))\sum_{W\in\B(\pi)}W(hu)|\det(hu)|^{\sigma}\\\int_{N\backslash G}e\left(\frac{C(\Pi)}{X}cw't^{-1}e_1\right)\We_h\left[\begin{pmatrix}C(\Pi)&\\&t\end{pmatrix}w\right]\overline{W(tw')}|\det(t)|^{-\sigma}dt d\mu_p(\pi),
\end{multline*}
for some small enough $\sigma$. Thus can be written as (using absolute convergence to exchange integrals and summation)
\begin{multline}\label{main-equation}
    \Pi(f_X^h)\We_\phi(1)=\int_{\hat{G}}\gamma^*(\Pi,\pi)\sum_{W\in\B(\pi)}\int_{N\backslash G} \tilde{V}_{\Pi,W,h}(t)
    \int_{\R^{n}}\int_{\R^n}\int_{G}\int_{\R^\times}{f}^0\left[z\begin{pmatrix}I&b\\&1\end{pmatrix}\begin{pmatrix}u&\\c&1\end{pmatrix}\right]\\\omega_\Pi(z)e(e_n^thb)
    W(hu)|\det(hu)|^{\sigma}e\left(\frac{C(\pi)}{X}cw't^{-1}e_1\right)d^\times z\frac{du}{|\det(u)|}dbdcdtd\mu_p(\pi).
\end{multline}
Here
\begin{equation}\label{bound-gamma-star}
    \gamma^*(\Pi,\pi):=\gamma(1/2-\sigma,\Pi\otimes\bar{\pi})^{-1}C(\Pi)^{n\sigma}\omega_{\bar{\pi}}((-1)^{n}C(\Pi))\ll C(\pi')^{O(1)},
\end{equation}
which follows from Lemma \ref{boundgammafactor}, and 
$$\tilde{V}_{\Pi,W,h}(t):=\We_h\left[\begin{pmatrix}C(\Pi)&\\&tw'\end{pmatrix}w\right]\overline{W(t)}|\det(t)|^{-\sigma}.$$
Another application of Whittaker--Plancherel expansion \eqref{genwhitplan} and the local functional equation \eqref{lfe} allows us to write
\begin{multline*}
\We_h\left[\begin{pmatrix}C(\Pi)&\\&tw'\end{pmatrix}w\right]=\int_{\hat{G}}\omega_{\tilde{\pi}}((-1)^nC(\Pi)^{-1})\gamma(1/2,\Pi\otimes\tilde{\pi})\\\sum_{W'\in\B(\pi)}W'(t)\int_{N\backslash G}\We_h\left[\begin{pmatrix}t'&\\&1\end{pmatrix}\right]\overline{W'(t')}dt'.
\end{multline*}
The inner integral above can also be written as
$$\int_{G}\int_G f_1(t_1)\phi(t_2)W'(t_2ht_1)dt_1dt_2,$$
which follows from the definition of $\We_1$. We integrate by parts many times in the $t_1$ integral with respect to $\D$, then use Lemma \ref{whittaker-bound}
$$W'(t_2ht_1)\ll \delta^{1/2-\eta}(t_2ht_1) S_{d_0}(W'),$$
and finally, use that $\phi$ is $L^1$ normalized to deduce that the above integral is bounded by $S_{-d}(W')\delta^{1/2-\eta}(h)$ for any $d>0$ as $t_1\asymp t_2\asymp 1$. Again appealing to Lemma \ref{whittaker-bound} we get that
$$W'(t)\ll \delta^{1/2-\eta}(t)S_{d_0}(W'),$$
along with Lemma \ref{boundgammafactor} and Lemma \ref{sobolev-norm-property} yield that
$$\We_h\left[\begin{pmatrix}C(\Pi)&\\&tw'\end{pmatrix}w\right]\ll \delta^{1/2-\eta}(h)\delta^{1/2-\eta}(t),$$
for any small enough $\eta>0$.
Using this bound and again using Lemma \ref{whittaker-bound}, for $t=ak=\diag(a_1,\dots,a_n)k$ in the Iwasawa coordinates we obtain
\begin{equation}\label{tilde-V-bound}
    \tilde{V}_{\Pi,W,h}(t)\prec_{N,\eta}\delta^{1/2-\eta}(h)|\det(a)|^{-\sigma}\delta^{1-\eta}({a})\prod_{j=1}^{n-1}\min(1,(a_j/a_{j+1})^{-N})S_{d_0}(W),
\end{equation}
for some fixed $d_0$

Now in the $b$ and $c$ integral in \eqref{main-equation} we integrate by parts several times. Also we use the  $\D$ to integrate by parts in the $u$ integral several times and apply Lemma \eqref{whittaker-bound} to estimate $\pi(u)W(h)$ for $u\asymp 1$. We write $h=a'k'=(a_1',\dots,a_n')k'\in A\times K$. From the support condition of $f^0$ we obtain that
\begin{multline}\label{integral-f-bound}
    X^{-n}\int_{\R^{n}}\int_{\R^n}\int_{G}\int_{\R^\times}f^0_X\left[z\begin{pmatrix}I&b\\&1\end{pmatrix}\begin{pmatrix}u&\\c/X&1\end{pmatrix}\right]\omega_\Pi(z)\\e(e_n^thb)
    W(hu)|\det(hu)|^{\sigma}e\left(\frac{C(\Pi)}{X}cw't^{-1}e_1\right)d^\times z\frac{du}{|\det(u)|}dbdc\\
    \ll_{N,d,\eta} |\det(a')|^\sigma\delta^{1/2-\eta}(a')\prod_{i=1}^{n-1}\min(1,(a_i'/a_{i+1}')^{-N})\min(1,{a'}_n^{-N})\\
    \min\left(1,(C(\Pi)/a_1X)^{-N}\right)S_{-d}(W),
\end{multline}
for all large $N$ and $d$.
Writing the $t$-integral in the RHS of \eqref{main-equation} in the $ak$ coordinates, and using \eqref{integral-f-bound} and \eqref{tilde-V-bound} we bound it by
\begin{multline*}
    \ll_{N,\eta}|\det(a')|^\sigma\delta^{1-\eta}(a')\prod_{i=1}^{n-1}\min(1,(a_i'/a_{i+1}')^{-N})\min(1,{a'}_n^{-N})\\
    S_{-d}(W)\int_{A}\min(1,(C(\Pi)/a_1X)^{-N})\prod_{j=1}^{n-1}\min(1,(a_j/a_{j+1})^{-N})|\det({a})|^{-\sigma}\frac{d{a}}{\delta^{\eta}(a)}.
\end{multline*}
The last integral, as in the proof of Proposition \ref{mainprop}, is bounded by 
$(C(\Pi)/X)^{-n\sigma}$.
Making $d$ large enough, using \eqref{bound-gamma-star} and Lemma \ref{sobolev-norm-property}, as in the proof of Lemma \ref{whittaker-bound}, we conclude that \eqref{main-equation} is bounded by 
\begin{equation}\label{bound-j-pi-f}
    \Pi(f^h_X)\We_\phi(1)\ll_{N,\eta}|\det(a')|^\sigma\delta^{1-\eta}(a')\prod_{i=1}^{n-1}\min(1,(a_i'/a_{i+1}')^{-N})\min(1,{a'}_n^{-N})(C(\Pi)/X)^{-n\sigma},
\end{equation}
uniformly in $\phi$.
Thus we have
$$\int_{A\times K}\Pi(f_X^h)\We_\phi(1)dh\ll(C(\Pi)/X)^{-n\sigma}\int_{A} \prod_{i=1}^{n-1}\min(1,{a'}_n^{-N})\min(1,(a_i'/a_{i+1}')^{-N})|\det(a')|^{\sigma}\frac{da'}{\delta^{\eta}(a')}.$$
The last integral, again as in the proof of Lemma \ref{whittaker-bound}, is convergent.
We conclude noting that if $\Pi$ is $\theta$-tempered then we can take $0<\sigma<1/2-\theta$.
\end{proof}

\subsection{Proof of the theorems}

In this subsection we assume the the representations have trivial central character and work with $K_0(X,\tau)$ only. One can modify the statements and proofs for general central character and work with $K_1(X,\tau)$ as in the proof of Theorem \ref{existence-weak} and Theorem \ref{existence-strong}.

Let $\Pi^\infty$ be the subspace of smooth vectors in $\Pi$. We define $\Pi^{-\infty}$ to be the space of distributional vectors which can be identified with a subspace of the algebraic dual of $\tilde{\Pi}^\infty$. This identification is induced by the natural pairing between $\Pi^\infty\times \tilde{\Pi}^\infty$ which is given by the integration in the Kirillov model as in \S\ref{sec:whittaker-kirillov}. Recall that $C^\infty_c(N\backslash G,\psi)\subset\Pi^\infty$ through the Kirillov model. Therefore, there is a natural surjection from $\Pi^{-\infty}$ to the continuous dual of $C_c^\infty(N\backslash G,\psi)$.

Let $\delta\in\Pi^{-\infty}$ be the distributional vector characterized by
$$( \delta,\tilde{\We}) := \tilde{\We}(1),$$
for any $\tilde{\We}\in\tilde{\Pi}^\infty$, where we $(,)$ denotes the pairing between ${\Pi}^{-\infty}$ and $\tilde{\Pi}^\infty$.
That is $\delta$ is the Dirac delta-mass at the identity coset in $(N\backslash G,\psi)$.
If $\Pi$ is unitary then we can identify $\bar{\Pi}$ with $\tilde{\Pi}$, and equivalently, define $\delta\in\Pi^{-\infty}$ by
$$\langle \delta, \We\rangle :=\overline{\We(1)},$$
There is a natural continuous map, e.g. by \cite[\S3.6, Lemma 2]{NV},
$$C_c^\infty(\GL_{n+1}(\R))\times \Pi^{-\infty}\to\Pi^{\infty}:\quad (f,v)\mapsto \Pi(f)v.$$
Hence, we may evaluate $\pi(f)\delta$ at the identity.

By the spectral expansion in $\Pi$ we obtain that
$$\Pi(f)\delta(1)=\sum_{\We\in\B(\Pi)}\langle\Pi(f)\delta,\We\rangle \We(1),$$
where $\B(\Pi)$ is an orthonormal basis consisting of smooth vectors. The above sum is absolutely convergent and does not depend on a choice of $\B(\Pi)$, see \cite[Appendix 4]{BM}.
Now we check that
$$\langle\Pi(f)\delta,\We\rangle = \langle \delta, \bar{\Pi}(\check{f}) \We\rangle = \bar{\Pi}(\check{f})\overline{\We(1)},$$
where $\check{f}(g):=f(g^{-1})$. We change variable $g\mapsto g^{-1}$, exchange sum and integral in the spectral sum above, and use unitarity to construct an orthonormal basis $\{\Pi(g^{-1})\We\}_{\We\in\B(\Pi)}$ to obtain that
\begin{equation}\label{limiting-operation-bessel-distribution}
\Pi(f)\delta(1)=\sum_{\We\in\B(\Pi)}\Pi(f)\We(1)\overline{\We(1)}.
\end{equation}
Note that the RHS of \eqref{limiting-operation-bessel-distribution} is also denoted by $J_\Pi(f)$, see \eqref{relative-trace}.

We recall from \eqref{trace} that
$$\Tr(\Pi(f)):=\sum_{\We\in\B(\Pi)}\langle \Pi(f)\We,\We\rangle=\sum_{\We\in\B(\Pi)}\int_{N\backslash G}\int_{\GL_{n+1}(\R)}f(g)\We\left[\begin{pmatrix}h\\&1\end{pmatrix}g\right]\overline{\We\left[\begin{pmatrix}h\\&1\end{pmatrix}\right]}dg dh.$$
We may $\B(\Pi)$ to be an orthonormal basis of $\Pi$ consisting of eigenfunctions of $\D$.
The above sums and integrals become absolutely convergent after we integrate by parts in the $g$-integral sufficiently many times by $\D$, see proof of Lemma \ref{sobolev-norm-property}.
We move the sum over $\B(\Pi)$ past the integral $N\backslash G$, change the basis $\B(\Pi)$ with $\Pi\left[\begin{pmatrix}h^{-1}\\&1\end{pmatrix}\right]\We$, and change variable $g\mapsto \begin{pmatrix}h^{-1}\\&1\end{pmatrix}g\begin{pmatrix}h\\&1\end{pmatrix}$ to obtain
$$\Tr(\Pi(f))=\int_{N\backslash G}\sum_{\We\in\B(\Pi)}\Pi(f^h)W(1)\overline{W(1)} dh$$
where, as before,
$$f^h(g):=f\left[\begin{pmatrix}h^{-1}\\&1\end{pmatrix}g\begin{pmatrix}h\\&1\end{pmatrix}\right].$$
Thus using \eqref{limiting-operation-bessel-distribution} we can write
\begin{equation}\label{trace-limit-operation}
    \Tr(\Pi(f))=\int_{N\backslash G}\Pi(f^h)\delta(1)dh.
\end{equation}
The above can also be interpreted as a limit of truncated integrals as
$$\Tr(\Pi(f))=\lim_{R\to\infty}\Pi\left(\int_{N\backslash G} f_R(h)f^hdh\right)\delta(1),$$
where $f_R$ is a smoothened characteristic function of the $R$-radius ball around the identity in $G$.

\begin{proof}[Proof of Theorem \ref{trace-estimates}]
Given any 
normalized majorant
$F_X$ of $K_0(X,\tau)$, one can approximate $F_X$ by $f_X$ defined in \eqref{char-func-new-sub} using e.g. Lemma \ref{folner-lemma}. Thus it is enough to prove Theorem \ref{trace-estimates} for $f_X$ replacing $F_X$.

We choose $\{\phi_j\}_j\in C^\infty_c(G)$ to be a sequence as in the statement of Proposition \ref{trace-estimates}. Then $\We_{\phi_j}$ is a sequence in the continuous dual of $C_c^\infty(N\backslash G,\psi)$ tending to $\delta$ mass in $C_c^\infty(N\backslash G,\psi)$ and their supports in $N\backslash G$ are shrinking to the identity coset as $j\to\infty$. By continuity of the association $(f,v)\mapsto \pi(f)v$ and the evaluation map $\We\mapsto \We(1)$ we have 
$$\lim_{j\to\infty}\Pi(f_X)\We_{\phi_j}(1)=J_\Pi(f_X),$$
from \eqref{limiting-operation-bessel-distribution},
and
$$\lim_{j\to \infty}\int_{A\times K}\Pi(f_X^h)\We_\phi(1)dh = \Tr(\Pi(f_X)).$$
Applying \eqref{bound-j-pi-f} for $h=1$ and Proposition \ref{estimating-projection-shrinking-support} we conclude the proof. 
\end{proof}

\begin{proof}[Proof of Theorem \ref{nonexistence}]
Let $\delta,X,\tau$, and $\Pi\ni v$ be as in the statement of Theorem \ref{nonexistence}. In particular, we have
$$\|\Pi(g)v-v\|_\Pi<\delta,\quad\forall g\in K_0(X,\tau).$$
Let $F_X$ be a
normalized majorant
of
$K_0(X,\tau)$.
Then
$$\Tr(\Pi(F_X))\ge \langle \Pi(F_X)v,v\rangle_\Pi=1+\langle \Pi(F_X)v-v,v\rangle_\Pi\ge 1-\delta,$$
where the last inequality is obtained by Cauchy--Schwarz. Then Theorem \ref{trace-estimates} implies that
$$C(\Pi)\ll_\tau X(1-\delta)^{1/n\sigma},$$
which completes the proof
\end{proof}

\begin{proof}[Proof of Theorem \ref{uniqueness}]
Fix $0<\delta<1$. Let $\tau$ and $\Pi$ be as in the statement of the Theorem \ref{uniqueness}. Let $X\asymp C(\Pi)$ and $F_X$ be 
a normalized majorant
of $K_0(X,\tau)$.
We define $\mathcal{V}_\delta$ by
$$\mathcal{V}_\delta:=\{v\in\Pi;\|v\|_\Pi=1\mid\|\Pi(g)v-v\|_\Pi<\delta, \forall g\in K_0(X,\tau)\}.$$
Let $\B(\Pi)$ be any orthonormal basis of $\Pi$. Then Theorem \ref{trace-estimates} implies that
$$\sum_{v\in\B(\Pi)\cap \mathcal{V}_\delta}\langle \Pi(F_X)v,v\rangle_\Pi\le \Tr(\Pi(F_X)) \ll_\tau 1.$$
But for each $v\in\B(\Pi)$ we have, as in the proof of Theorem \ref{nonexistence},
$$\langle \Pi(F_X)v,v\rangle_\Pi\ge 1-\delta.$$
Hence, cardinality $\mathcal{V}_\delta\cap \B(\Pi)$ must be $\ll_\tau (1-\delta)^{-1}$, which concludes the proof.
\end{proof}

\section{Proof of Theorem \ref{application}}\label{sec:proof-application}
Our proof uses a pre-Kuznetsov type formula on $\PGL_{n}(\R)$.
Let $f_X$ be a 
normalized majorant of $K_0(X,\tau)$ (see \eqref{defn-congruence-subgroup}) and sufficiently concentrated near $1$ (i.e., $\tau$ is sufficiently small). We recall that $f_X$ 
is assumed to take non-negative values. Let $F_X$ be the self-convolution of $f_X$ defined by 
$$F_X(g):=\int_{\PGL_{n}(\R)} f_X(h){f_X(gh)}dh.$$
Recall the conventions on $\tau$ as in the beginning of \S\ref{sec:reduction}. 
Lemma \ref{convolution-normalized-majorant} implies that $F_X$ is also 
a
normalized majorant of $K_0(X,\tau)$.
\begin{lemma}\label{convolution-normalized-majorant}
Suppose that $\tau_0 > 0$
is sufficiently small.
Let $F^1_X$ and $F^2_X$ be normalized majorants of $K_*(X,\tau_0)$. Then there is a $\tau_1$ such that the convolution $F^1_X*F^2_X$ is also a normalized majorant of $K_*(X,\tau_1)$.
\end{lemma}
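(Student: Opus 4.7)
The plan is to verify the three conditions of Definition \ref{defn:normalized-majorant} for $F := F^1_X * F^2_X$, choosing a suitable $\tau_1$.

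I would first handle non-negativity, support, and total mass, which are all straightforward. Non-negativity is immediate from that of the factors. The support of $F(g) = \int F^1_X(h) F^2_X(h^{-1}g) \, dh$ lies in $K_*(X,\tau_0) \cdot K_*(X,\tau_0)$; by the approximate-group property of these sets noted in the discussion preceding Lemma \ref{folner-lemma}, for $\tau_0$ sufficiently small this product is contained in $K_*(X,\tau_1)$ for an appropriate $\tau_1$. Unimodularity of $\GL_n(\R)$ gives $\int F \, dg = \left(\int F^1_X\right)\left(\int F^2_X\right) \asymp 1$.

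Next I would establish the derivative bound (condition (3)). Writing $F(g) = \int F^2_X(u) F^1_X(g u^{-1}) \, du$ (using unimodularity) and differentiating under the integral sign, I would pass coordinate derivatives in $g$ onto the factor $F^1_X(gu^{-1})$ via the chain rule. Since both $g$ and $u$ lie near the identity in the Iwahori-type support, the map $g \mapsto g u^{-1}$ has bounded coordinate Jacobian in the $(a,b,c,d)$-parameterization, with scaling appearing only in the $c$ and $d$ directions consistent with the target factors $X^{|\gamma| + *\delta}$. Applying the derivative bound for $F^1_X$ and using $\int F^2_X \asymp 1$ then yields the required bound $X^{A + |\gamma| + *\delta}$.

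The constancy condition is the subtle point I expect to be the main obstacle. Let $\tau_0'$ be such that $F^i_X \equiv c_i \asymp X^A$ on $K_*(X,\tau_0')$. For $\tau_1'$ sufficiently small and $g \in K_*(X,\tau_1')$, the set $E(g) := \{h \in K_*(X,\tau_0') : h^{-1}g \in K_*(X,\tau_0')\}$ has volume $\asymp X^{-A}$ by another application of the approximate-group property, so $F(g) \geq c_1 c_2 \vol(E(g)) \asymp X^A$ uniformly. Strictly speaking, $F$ need not be \emph{literally} constant on any $K_*(X,\tau_1'')$—as the one-dimensional example of self-convolving a characteristic function of an interval shows—but the applications in the paper use only the uniform lower bound just established. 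If literal constancy is required, one replaces $F$ on some smaller $K_*(X,\tau_1'')$ by its value at the identity and invokes the derivative bound to control the resulting $O(1)$-perturbation supported in a set of measure $O(X^{-A})$, which does not affect the other properties.
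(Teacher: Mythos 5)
Your approach follows the paper's for nonnegativity, support, $L^1$-mass, and the derivative bound, and all of those are correct. What is genuinely valuable here is your observation about the constancy condition. You are right that for arbitrary normalized majorants $F^1_X$, $F^2_X$ in the sense of Definition \ref{defn:normalized-majorant}, the convolution $F^1_X * F^2_X$ is typically \emph{not} constant on any smaller $K_*(X,\tau')$: already in one dimension the self-convolution of a smooth bump that equals $1$ near the origin is strictly maximized at the origin with a nondegenerate second-order drop-off, not flat. The paper's own proof attempts to handle this by claiming that $g\mapsto F^1_X(hg)$ satisfies (1) and (2) for $h \in K_*(X,\tau_5)$ with $\tau_5$ small, but this does not suffice: the function $h$ ranges over the full support $K_*(X,\tau_0)$ of $F^2_X$, and even on the smaller range the plateau of $g\mapsto F^1_X(hg)$ drifts with $h$, so the weighted average over $h$ has no common plateau. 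So the paper's argument as written does not establish the literal constancy required by Definition \ref{defn:normalized-majorant}, and you have identified a real gap.

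Your diagnosis of what is actually needed is also correct: everywhere the paper uses the constancy of a normalized majorant (notably in the lower bounds $\Tr(\pi(F_X)) \gg 1$ and $J_\pi(F_X)\gg 1$ when $X \geq C(\pi)$, and in the proof of Theorem \ref{application}), only the uniform lower bound $F_X \gg X^A$ on some $K_*(X,\tau')$ is used, and this is exactly what your argument via $\vol(E(g)) \asymp X^{-A}$ provides. One caveat on your proposed patch: naively overwriting $F$ by $F(1)$ on a smaller $K_*(X,\tau_1'')$ destroys smoothness at the boundary and hence the derivative estimates; a cleaner repair is either to weaken condition (1) to a two-sided bound $F_X \asymp X^A$ on $K_*(X,\tau')$, which costs nothing in the applications, or to build $F^1_X$ and $F^2_X$ from the start so that the support of $F^2_X$ sits strictly inside the plateau of $F^1_X$ (which is a choice one can make, but which is not implied by the hypotheses of the lemma as stated). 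Either way, your reading is more careful than the source on this point.
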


\begin{proof}
Nonnegativity and property $(2)$ in Definition \ref{defn:normalized-majorant} of the convolution are obvious. Note that there exist $\tau_2,\tau_3>0$ so that for all $g\in K_*(X,\tau_0)$ we have
$$K_*(X,\tau_2)\subset gK_0(X,\tau_0)\subset K_*(X,\tau_3),$$
which can be seen from direct matrix multiplication.

We claim that for every $\tau_4>0$ there exists a sufficiently small $\tau_5>0$ such that if $h\in K_*(X,\tau_5)$ then $g\mapsto F_X^1(hg)$ satisfies $(1)$ and $(2)$ in Definition \ref{defn:normalized-majorant} for $K_*(X,\tau_4)$. Thus there exists $\tau_1>0$ such that $F_X^1*F_X^2$ satisfies $(1)$ in Definition \ref{defn:normalized-majorant} for $K_*(X,\tau_1)$

Finally we have,
$$\int_{\GL_n(\R)}\partial_a^\alpha\partial_b^\beta\partial_c^\gamma\partial_d^\delta F^1_X\left[d\begin{pmatrix}a&b\\&1\end{pmatrix}\begin{pmatrix}I_{n}&\\c&1\end{pmatrix}h\right]F_X^2(h)dh\ll X^{2A+|\gamma|+*\delta}\vol(K_*(X,\tau_1)).$$
This proves property $(3)$ of Definition \ref{defn:normalized-majorant}.
\end{proof}

We also check that $\mathrm{Vol}(K_0(X,\tau))\asymp X^{n-1}$ which implies that $\int_{N}F_X(x)\overline{\tilde{\psi}(x)}dx\ll X^{n-1}$.

In this section by $\pi$ we will denote an automorphic representation for $\PGL_n(\Z)$. We obtain a spectral decomposition of
\begin{equation}\label{spectral-decomposition}
    \sum_{\gamma\in \PGL_{n}(\Z)}F_X(x_1^{-1}\gamma x_2)=\int_{
    \pi
    }\sum_{\varphi\in \B(\pi)}\pi(F_X)\varphi(x_1)\overline{\varphi(x_2)}d\mu_{\mathrm{aut}}(\pi),
\end{equation}
where $\B(\pi)$ is an orthonormal basis of $\pi$
and
we integrate over $\pi$
in the automorphic spectrum of $\PGL_{n}(\Z)\backslash\PGL_{n}(\R)$
with respect to
the automorphic Plancherel measure
$d\mu_{\mathrm{aut}}$ 
(see \cite[Chapter $11.6$]{G} for details).

Let $\Gamma_\infty:=N\cap\PGL_{n}(\Z)$. It is easy to see that for a sufficiently small neighborhood
$\mathcal{U} \subset \PGL_{n}(\R)$ around the identity,
\begin{equation}\label{using-support}
    N\mathcal{U} N\cap \PGL_{n}(\Z)=\Gamma_\infty.
\end{equation}

Recall $\psi$, which is an additive character of $N$, defined in \eqref{add-char-of-n}). Let $W_\varphi$ be the Whittaker function of $\varphi$, i.e.
$$\int_{\Gamma_\infty\backslash N}\varphi(xg)\overline{{\psi}(x)}dx=W_\varphi(g).$$
We note, by Schur's lemma, that there exists a positive constant $\ell(\pi)$ such that
\begin{equation}\label{defn-c-pi}
    \|\varphi\|^2_{\pi}=\ell(\pi)\|W_\varphi\|^2_{\W(\pi,\psi)}.
\end{equation}
We note that when $\pi$ is cuspidal then $\ell(\pi)\asymp L(1,\pi,\Ad)$ where the underlying constant in $\asymp$ is absolute (coming from the residue of a maximal Eisenstein series at $1$, see \cite[p. $617$]{B3}).

We recall the Bessel distribution on a generic representation $\pi$ from  
\eqref{relative-trace}:
\begin{equation}\label{diff-basis}
    J_\pi(F_X):=
    \sum_{\V\in \B(\pi)}\pi(F_X)\V(1)\overline{\V(1)}
    =
    \sum_{\V\in \B(\pi)}|\pi(f_X)\V(1)|^2.
\end{equation}
For non-generic $\pi$ we define $J_\pi(F_X)$ to be identically zero.
We fix a basis $\B(\pi)$ containing a Whittaker newvector $\We$, i.e. $\We$ as in \eqref{newvec}.

We now take 
$x_i\in \Gamma_\infty\backslash N$
and multiply both sides of \eqref{spectral-decomposition} by $\psi(x_2)\overline{\psi(x_1)}$ and integrate with respect to $x_i$. 
Thus we automatically get rid of the non-generic part of the spectrum. Using \eqref{using-support}, and \eqref{diff-basis} we rewrite $\eqref{spectral-decomposition}$ as
\begin{align*}
    \int_{\pi}J_\pi(F_X)\ell(\pi)^{-1}d\mu_{\mathrm{aut}}(\pi)
    &=\sum_{\gamma\in \Gamma_\infty}\int_{\Gamma_\infty\backslash N}\int_{\Gamma_\infty\backslash N}F_X(x_1^{-1}\gamma x_2)\psi(x_2-x_1)dx_1dx_2\\
    &=\int_{\Gamma_\infty\backslash N}\int_{N}F_X(x_1^{-1}x_2)\psi(x_2-x_1)dx_1dx_2\\
    &=\int_{N}F_X(x)\overline{\psi(x)}\ll X^{n-1}.
\end{align*}

Note that $J_\pi(F_X)$ is non-negative on the automorphic spectrum.
We first drop everything other than the cuspidal spectrum from the LHS of the above. It is known that (see \cite{MS}) that cuspidal automorphic representations are $\theta$-tempered for $0\le \theta\le \frac{1}{2}-\frac{1}{1+n^2}$. From Theorem \ref{existence-strong} we obtain that if $\pi$ is cuspidal with $C(\pi)<X$ then
$$\pi(f_X)\V(1)\gg 1.$$
Hence $J_\pi(F_X)\gg 1$ for cuspidal $\pi$ with $C(\pi)<X$, and we conclude.

\end{document}